\newtheorem{theorem}{Theorem}
\newtheorem{lemma}{Lemma}
\newtheorem{remark}{Remark}
\newtheorem{definition}{Definition}
\newtheorem{conjecture}{Conjecture}
\newtheorem{proposition}{Proposition}
\renewenvironment{proof}[1][\proofname]{\par
  \vspace{-\topsep}% remove the space after the theorem
  \pushQED{\qed}%
  \normalfont
  \topsep0pt \partopsep0pt % no space before
  \trivlist
  \item[\hskip\labelsep
        \itshape
    #1\@addpunct{.}]\ignorespaces
}{%
  \popQED\endtrivlist\@endpefalse
  \addvspace{6pt} % some space after
}
\title{Beyond adjacency: Graph encoding with reachability and shortest paths
}
\author{
    Shiqiang Zhang,~~Ruth Misener$^\star$\\
    Department of Computing, Imperial College London
}
\begin{document}
\maketitle

\def\thefootnote{$\star$}\footnotetext{Corresponding author: \texttt{r.misener@imperial.ac.uk}}

\begin{abstract}
    Graph-structured data is central to many scientific and industrial domains, where the goal is often to optimize objectives defined over graph structures. Given the combinatorial complexity of graph spaces, such optimization problems are typically addressed using heuristic methods, and it remains unclear how to systematically incorporate structural constraints to effectively reduce the search space. This paper introduces explicit optimization formulations for graph search space that encode properties such as reachability and shortest paths. We provide theoretical guarantees demonstrating the correctness and completeness of our graph encoding. To address the symmetry issues arising from graph isomorphism, we propose lexicographic constraints over neighborhoods to eliminate symmetries and theoretically prove that adding those constraints will not reduce the original graph space. Our graph encoding, along with the corresponding symmetry-breaking constraints, forms the basis for downstream optimization tasks over graph spaces.
\end{abstract}

\section{Introduction}\label{sec:intro}
Graphs ubiquitously appear across science and industry, leading to various decision-making tasks over graph spaces, e.g., computer-aided molecular design (CAMD) \citep{ng2014challenges}, neural architecture search (NAS) \citep{white2023NASthousand}, and topological optimization \citep{maze2023diffusion}. Due to the combinatorial nature of graphs, optimization over graph spaces is usually conducted via sample- or evolution-based algorithms, following a sample-then-evaluate procedure \citep{wan2021adversarial,ru2021interpretable,white2021bananas}. Those works spend substantial effort on developing specific heuristics to efficiently explore the graph space and find promising solutions. With the advances of machine learning (ML), numerous graph ML models, e.g., graph neural networks (GNNs) \citep{wu2020comprehensive,zhou2020graph}, are proposed to learn graph representations for prediction or classification tasks \citep{chami2022machine}. For generation purposes, generative models, e.g., variational autoencoders (VAEs) \citep{kipf2016variational}, generative adversarial networks (GANs) \citep{wang2018graphgan}, normalizing flows \citep{kobyzev2020normalizing}, are gradually extended to graph domain \citep{faez2021deep,liu2023generative}. To avoid direct optimization over discrete spaces, those works often approximate graph optimization over a continuous latent space and then decode the continuous latent variables to graphs. Although they achieves promising performance in various graph-based applications, we observe the following limitations: (i) feasibility of solutions is usually learned from data and checked later by evaluating problem-specific requirements, (ii) optimality is not guaranteed since solutions are often obtained via applying heuristics over original search space or solving an approximated problem over a latent space, (iii) those existing methods are are difficult to generalize to other tasks since they are highly problem-specific and mostly data-driven. 

The aforementioned issues motivate us to take a step back and rethink how to optimize over graph spaces using discrete optimization. For instance, we may formulates trained GNNs using mixed integer programming (MIP) to globally optimize problems with GNNs as objectives or constraints, and thereby verify GNN robustness \citep{hojny2024verifying} and find optimal molecular graphs \citep{zhang2024augmenting}. Symmetry issues caused by graph isomorphism may be resolved by symmetry-breaking constraints, which removes most symmetries and accelerates the solving process in molecular design problems \citep{zhang2023optimizing}. Molecular space may be encoded using integer programming (IP) and molecular validity may be restricted by directly adding mathematical constraints into the formulation \citep{zhang2025limeade}. Recently, graph Bayesian optimization (BO), relying on advances in graph kernels \citep{vishwanathan2010graph, borgwardt2020graph, cao2025metric} and BO \citep{frazier2018tutorial, schulz2018tutorial}, emerges as an attractive approach to optimize black-box graph functions \citep{ru2021interpretable, wan2021adversarial}. We develop MIP formulations for shortest-path graph kernels \citep{borgwardt2005shortest} to globally optimize acquisition functions in graph BO and numerically show the efficiency and potential of our methods in both CAMD \citep{xie2025molecular,xie2025bogrape} and NAS \citep{xie2025nasgoat}. The core of our formulations is encoding of graph spaces, e.g., connected, undirected graphs for molecules, weakly connected, acyclic, directed graphs (digraphs) for neural networks.

This paper aims to distill the fundamental contributions of our recent works \citep{zhang2023optimizing,xie2025bogrape,xie2025molecular,xie2025nasgoat} and extend them from specific applications to general settings. First, we use IP to formulate the graph space consisting of all labeled graphs, no matter their directionality or connectivity. Besides the classic adjacency variables controlling the existence of edges, we additionally encode reachability and shortest paths information, which allows optimization involving these properties, e.g., graph BO equipped with shortest path graph kernels. Our general encoding is easily limited to special subspaces, e.g., undirected graphs, strongly connected digraphs, and acyclic digraphs (DAGs), by simply adding certain linear constraints without introducing extra variables. Assuming strong connectivity, our encoding could be significantly simplified at the cost of losing control of the shortest paths between arbitrary pair of nodes. Both the general and the simplified encoding could be applied to the underlying graph space, i.e., removing directions of all edges, to guarantee weak connectivity of the original graph space, which is more challenging comparing to strong connectivity or acyclicity. When graphs are unlabeled, the symmetry issue appears, i.e., there are $n!$ ways to index a graph with $n$ nodes, while each indexing is a different solution to graph optimization. To handle this issue, we propose symmetry-breaking techniques to remove symmetric solutions to general graphs and design better constraints for DAGs. We theoretically guarantee that adding those constraints will not reduce the real search space, and numerically show that our constraints can remove most symmetries. The code is available at \href{https://github.com/cog-imperial/graph_encoding}{https://github.com/cog-imperial/graph\_encoding}.

\textbf{Paper structure:} Section \ref{sec:general_encoding} presents the general graph encoding from previous work \citep{xie2025bogrape,xie2025molecular,xie2025global}. Section \ref{sec:simplified_encoding} proposes a simplified encoding after assuming strong connectivity, reducing the magnitude of variables and constraints needed from $O(n^3)$ to $O(n^2)$, where $n$ is the graph size. Section \ref{sec:symmetry_breaking} improves our symmetry-breaking work \citep{zhang2023optimizing} by removing the need of one indexed node and introduces more powerful constraints for DAGs. Section \ref{sec:results} evaluates the performance of our symmetry-breaking constraints in several settings. Section \ref{sec:conclusion} summarizes the paper and discusses future works. Appendices \ref{app:example_undirected} and \ref{app:example_DAG} provide two examples illustrating how to use our graph indexing Algorithms \ref{alg:indexing_undirected} and \ref{alg:indexing_DAG} to index a general graph and a DAG, respectively.

\begin{table}[]
    \centering
    \caption{List of popular graph spaces that could be formulated using our graph encoding. Assume that all graphs are labeled. $(\cdot)^U$ ($(\cdot)^T$, respectively) means applying Eq.~($\cdot$) to the underlying (transpose, respectively) graph space.}
    \label{tab:summary_encoding}
    \begin{tabular}{ccccc}
        \toprule
        Space & Description & Encoding & Simplified encoding & OEIS \\
        \midrule
        $\mathcal G_{n_0,n}^C$ & connected undirected graphs & \eqref{eq:general_encoding}+\eqref{eq:undirect} & \eqref{eq:simplified_encoding}+\eqref{eq:undirect} & \href{https://oeis.org/A001187}{\texttt{A001187}}\\
        $\mathcal G_{n_0,n}^S$ & strongly connected digraphs & \eqref{eq:general_encoding}+\eqref{eq:strong_connectivity} & \eqref{eq:simplified_encoding}+\eqref{eq:simplified_encoding}$^T$+\eqref{eq:transpose_graph} & \href{https://oeis.org/A003030}{\texttt{A003030}} \\
        $\mathcal G_{n_0,n}^W$ & weakly connected digraphs & \eqref{eq:general_encoding}+\eqref{eq:general_encoding}$^U$+\eqref{eq:underlying_graph} & \eqref{eq:general_encoding}+\eqref{eq:simplified_encoding}$^U$+\eqref{eq:underlying_graph} & \href{https://oeis.org/A003027}{\texttt{A003027}} \\
        $\mathcal G_{n_0,n}^D$ & DAGs & \eqref{eq:general_encoding}+\eqref{eq:DAG} & N/A & \href{https://oeis.org/A003024}{\texttt{A003024}} \\
        $\mathcal G_{n_0,n}^{D,W}$ & weakly connected DAGs & \eqref{eq:general_encoding}+\eqref{eq:DAG}+\eqref{eq:general_encoding}$^U$+\eqref{eq:underlying_graph} & \eqref{eq:general_encoding}+\eqref{eq:DAG}+\eqref{eq:simplified_encoding}$^U$+\eqref{eq:underlying_graph} & \href{https://oeis.org/A082402}{\texttt{A082402}} \\
        $\mathcal G_{n_0,n}^{D,st}$ & st-DAGs & \eqref{eq:general_encoding}+\eqref{eq:DAG}+\eqref{eq:st-DAG} & N/A & \href{https://oeis.org/A165950}{\texttt{A165950}}\\
        \bottomrule
    \end{tabular}
\end{table}

\section{General Graph Encoding}\label{sec:general_encoding}

This section encodes graphs using IP. We ignore node/edge features and focus on graph structures. To avoid graph isomorphism caused by node indexing, assume that all nodes are labeled differently. Intuitively, encoding such a general graph space is easy, since each graph is uniquely determined by its adjacency matrix, and one only needs to define the $n\times n$ adjacency matrix containing binary variable $A_{u,v}$ that denotes the existence of edge $u\to v$. However, this naive encoding has no extra graph information, e.g., connectivity, reachability, and shortest distance. Encoding these graph properties into decision space is significantly more challenging because we must define constraints that prescribe all variables to have correct values for any possible graph in the search space.

To begin with, we define variables corresponding to relevant graph properties in Table \ref{tab:variables}. We consider all graphs with node number ranging from $n_0$ to $n$. For simplicity, we use $[n]$ to denote the set $\{0,1,\dots, n-1\}$. For each variable $Var$ in Table \ref{tab:variables}, we use $Var(G)$ to denote its value on a given graph $G$. For example, $d_{u,v}(G)$ is the shortest distance from node $u$ to node $v$ in graph $G$. 

\begin{table}[]
    \centering
    \caption{Variables introduced to encode graphs with at most $n$ nodes. Since the shortest distance between two nodes is always less than $n$, we use $n$ to denote infinity, i.e., $d_{u,v}=n$ means node $u$ cannot reach node $v$.}
    \label{tab:variables}
    \begin{tabular}{ccc}
        \toprule
         Variables & Domain & Description \\  
        \midrule
        $A_{v,v},~v\in [n]$ & $\{0,1\}$ & if node $v$ exists\\
        $A_{u,v},~u,v\in [n],~u\neq v$ & $\{0,1\}$ &  if edge $u\to v$ exists \\
        $r_{u,v},~u,v\in [n]$ & $\{0,1\}$ & if node $u$ can reach node $v$\\
        $d_{u,v},~u,v\in [n]$ & $[n+1]$ &  the shortest distance from node $u$ to node $v$ \\
        $\delta_{u,v}^w,~u,v,w\in [n]$ & $\{0,1\}$ & if node $w$ appears on the shortest path from node $u$ to node $v$\\
        \bottomrule
    \end{tabular}    
\end{table}

If graph $G$ is given, all variable values can be easily obtained using classic shortest-path algorithms like the Floyd–Warshall algorithm \citep{floyd1962algorithm}. However, for graph optimization, those variables need to be constrained properly so that they have correct values for any given graph. We first provide a list of necessary conditions that these variables should satisfy based on their definitions. Then we prove that these conditions are sufficient.

\textbf{Condition ($\mathcal C_1$):} At least $n_0$ nodes exist. W.l.o.g., assume that nodes with smaller indexes exist, as shown in Eq.~\eqref{eq:C1}.

\textbf{Condition ($\mathcal C_2$):} Initialization for nonexistent nodes, i.e., if either node $u$ or node $v$ does not exist, edge $u\to v$ cannot exists, node $u$ cannot reach node $v$, and the shortest distance from node $u$ to node $v$ is infinity, i.e., $n$:
\begin{equation*}
    \begin{aligned}
        \min(A_{u,u},A_{v,v})=0\Rightarrow A_{u,v}=0,~r_{u,v}=0,~d_{u,v}=n,~\forall u,v\in [n],~u\neq v
    \end{aligned}
\end{equation*}
which could be rewritten as Eq.~\eqref{eq:C2} with $n$ as a big-M coefficient.

\textbf{Condition ($\mathcal C_3$):} Eq.~\eqref{eq:C3} is the initialization for single node, i.e., node $v$ can reach itself with shortest distance as $0$, and node $v$ is obviously the only node that appears on the shortest path from node $v$ to itself.

\textbf{Condition ($\mathcal C_4$):} Initialization for each edge, i.e., if edge $u\to v$ exists, node $u$ can reach node $v$ with shortest distance as $1$. Otherwise, the shortest distance from node $u$ to node $v$ is larger than $1$:
\begin{equation*}
    \begin{aligned}
        A_{u,v}=1&\Rightarrow r_{u,v}=1,~d_{u,v}=1,&&\forall u,v\in [n],~u\neq v\\
        A_{u,v}=0&\Rightarrow d_{u,v}>1,&&\forall u,v\in [n],~u\neq v
    \end{aligned}
\end{equation*}
which could be rewritten as Eq.~\eqref{eq:C4} with $n-1$ as a big-M coefficient.

\textbf{Condition ($\mathcal C_5$):} Compatibility between distance and reachability, i.e,, node $u$ can reach node $v$ if and only the shortest distance from node $u$ to node $v$ is finite:
\begin{equation*}
    \begin{aligned}
        d_{u,v}<n\Leftrightarrow r_{u,v}=1,~\forall u,v\in[n],~u\neq v
    \end{aligned}
\end{equation*}
which could be rewritten as Eq.~\eqref{eq:C5} with $n-1$ as a big-M coefficient.

\textbf{Condition ($\mathcal C_6$):} Compatibility between path and reachability, i.e., if node $w$ appears on the shortest path from node $u$ to node $v$, then node $u$ can reach node $w$, and node $w$ can reach node $v$ (the opposite is not always true), which means that node $u$ can reach node $v$ via node $w$:
\begin{equation*}
    \begin{aligned}
        \delta_{u,v}^w=1\Rightarrow r_{u,w}=r_{w,v}=1\Rightarrow r_{u,v}=1,\forall u,v,w\in [n],~u\neq v\neq w
    \end{aligned}
\end{equation*}
which is equivalent to Eq.~\eqref{eq:C6}.

\textbf{Condition ($\mathcal C_7$):} Construction of shortest path, i.e., (i) always assume that both node $u$ and node $v$ appear on the shortest path from node $u$ to node $v$ for well-definedness, (ii) if edge $u\to v$ exists or node $u$ cannot reach node $v$, then no other nodes can appear on the shortest path from node $u$ to node $v$, (iii) if edge $u\to v$ does not exist but node $u$ can reach node $v$, then at least one node except for node $u$ and node $v$ will appear on the shortest path from node $u$ to node $v$:
\begin{equation*}
    \begin{aligned}
        \delta_{u,v}^u&=\delta_{u,v}^v=1,&&\forall u,v\in [n],~u\neq v\\
        A_{u,v}=1\lor r_{u,v}=0&\Rightarrow \sum\limits_{w\in [n]}\delta_{u,v}^w=2,&&\forall u,v\in [n],~u\neq v\\
        A_{u,v}=0\land r_{u,v}=1&\Rightarrow \sum\limits_{w\in [n]}\delta_{u,v}^w>2,&&\forall u,v\in [n],~u\neq v
    \end{aligned}
\end{equation*}
Observing that $A_{u,v}=1\lor r_{u,v}=0\Leftrightarrow r_{u,v}-A_{u,v}=0$ since $r_{u,v}\ge A_{u,v}$ always holds, we can rewrite these constraints as Eq.~\eqref{eq:C7} with $n-2$ as a big-M coefficient.

\textbf{Condition ($\mathcal C_8$):} Triangle inequality of shortest distance, i.e., if node $u$ can reach node $w$ and node $w$ can reach node $v$, then the shortest distance from node $u$ to node $v$ is no larger than the shortest distance from node $u$ to node $w$ then to node $v$, and the equality holds when node $w$ appears on the shortest path from node $u$ to node $v$:
\begin{equation*}
    \begin{aligned}
        \delta_{u,v}^w=1&\Rightarrow d_{u,v}=d_{u,w}+d_{w,v},&&\forall u,v,w\in [n],~u\neq v\neq w\\
        r_{u,w}=r_{w,v}=1\land \delta_{u,v}^w=0&\Rightarrow d_{u,v}<d_{u,w}+d_{w,v},&&\forall u,v,w\in [n],~u\neq v\neq w
    \end{aligned}
\end{equation*}
where we omit $r_{u,w}=r_{w,v}=1$ in the first line since $\delta_{u,v}^w=1$ implies it. Similarly, we can rewrite these constraints as Eq.~\eqref{eq:C8} with $n+1$ and $2n$ as big-M coefficients.

Putting Conditions ($\mathcal C_1$)--($\mathcal C_8$) together presents the final formulation Eq.~\eqref{eq:general_encoding}, which comprises linear constraints resulting from these conditions. For simplicity, we omit the domain for indexes in Eq.~\eqref{eq:general_encoding}, i.e., $u,v,w$ are arbitrarily chosen from $[n]$ without duplicates.
\begin{subequations}\label{eq:general_encoding}
    \begin{align}
        &\sum\limits_{v\in [n]}A_{v,v}\ge n_0,~A_{v,v}\ge A_{v+1,v+1}\label{eq:C1}\\
        &2\cdot r_{u,v}\le A_{u,u}+A_{v,v},~d_{u,v}\ge n\cdot (1-A_{u,u}),~d_{u,v}\ge n\cdot (1-A_{v,v})\label{eq:C2}\\
        &r_{v,v}=1,~d_{v,v}=0,~\delta_{v,v}^v=1,~\delta_{v,v}^w=0\label{eq:C3}\\
        &r_{u,v}\ge A_{u,v},
        ~2-A_{u,v}\le d_{u,v}\le 1+(n-1)\cdot (1-A_{u,v})\label{eq:C4}\\
        &r_{u,v}\le n-d_{u,v}\le (n-1)\cdot r_{u,v}\label{eq:C5}\\
        &2\cdot \delta_{u,v}^w\le r_{u,w}+r_{w,v}\le 1+r_{u,v}\label{eq:C6}\\
        &\delta_{u,v}^u=\delta_{u,v}^v=1,~2+r_{u,v}-A_{u,v}\le \sum\limits_{w\in [n]}\delta_{u,v}^w\le 2+(n-2)\cdot(r_{u,v}-A_{u,v})\label{eq:C7}\\
        &(1-\delta_{u,v}^w)-(n+1)\cdot (2-r_{u,w}-r_{w,v})\le d_{u,w}+d_{w,v}-d_{u,v}\le 2n\cdot (1-\delta_{u,v}^w)\label{eq:C8}
    \end{align}
\end{subequations}

Constraints for optimization formulations must be carefully selected. There are often multiple ways to encode a combinatorial problem, but insufficient constraints result in an unnecessarily large search space with symmetric solutions, while excessive constraints may cutoff feasible solutions from the search space. Lemma \ref{lm:existence} shows that all constraints in Eq.~\eqref{eq:general_encoding} are necessary conditions.

\begin{lemma}\label{lm:existence}
    Given any graph $G$ with $n$ nodes, $\{A_{u,v}(G),r_{u,v}(G),d_{u,v}(G),\delta_{u,v}^w(G)\}_{u,v,w\in [n]}$ is a feasible solution of Eq.~\eqref{eq:general_encoding} with $n_0=n$.
\end{lemma}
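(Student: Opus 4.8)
The plan is to instantiate the candidate solution directly from $G$ and verify each block of constraints $\mathcal C_1$--$\mathcal C_8$ in turn. Since $n_0=n$, every node of $G$ exists, so $A_{v,v}(G)=1$ for all $v\in[n]$; this immediately discharges $\mathcal C_1$ (constraint~\eqref{eq:C1} reads $n\ge n_0$ and $1\ge 1$) and makes $\mathcal C_2$ (constraint~\eqref{eq:C2}) vacuous, since $2r_{u,v}\le A_{u,u}+A_{v,v}=2$ is slack and $d_{u,v}\ge n(1-A_{u,u})=0$ holds trivially. The off-diagonal $A_{u,v}(G)$ are the adjacency entries, $r_{u,v}(G)\in\{0,1\}$ is the reachability indicator, and $d_{u,v}(G)$ is the shortest-path distance from $u$ to $v$, with $d_{v,v}(G)=0$ and $d_{u,v}(G)=n$ whenever $v$ is unreachable from $u$.

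For the path variables I would take $\delta_{v,v}^w(G)=1$ iff $w=v$, set $\delta_{u,v}^u(G)=\delta_{u,v}^v(G)=1$ for $u\ne v$, and for pairwise-distinct $u,v,w$ declare $\delta_{u,v}^w(G)=1$ if and only if $w$ lies on \emph{some} shortest $u$-to-$v$ path, equivalently iff $r_{u,w}(G)=r_{w,v}(G)=1$ and $d_{u,w}(G)+d_{w,v}(G)=d_{u,v}(G)$. This is the key modeling choice: it is exactly what the two-sided requirement of $\mathcal C_8$ forces, namely $d_{u,v}=d_{u,w}+d_{w,v}$ whenever $\delta_{u,v}^w=1$ and $d_{u,v}<d_{u,w}+d_{w,v}$ whenever $w$ is reachable-through but $\delta_{u,v}^w=0$. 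With this assignment, $\mathcal C_3$ (constraint~\eqref{eq:C3}) holds by construction, and $\mathcal C_6$ (constraint~\eqref{eq:C6}) reduces to transitivity of reachability together with the elementary fact that a vertex on a shortest path is reachable from, and reaches, both endpoints. Conditions $\mathcal C_4$ and $\mathcal C_5$ follow from the equivalences ``edge $u\to v$ exists $\Leftrightarrow d_{u,v}=1$'' and ``$v$ reachable from $u\Leftrightarrow d_{u,v}<n$'': substituting $A_{u,v}\in\{0,1\}$ and $r_{u,v}\in\{0,1\}$ splits constraints~\eqref{eq:C4} and~\eqref{eq:C5} into cases each immediate from $0\le d_{u,v}\le n$.

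The bulk of the work is $\mathcal C_7$ and $\mathcal C_8$. Since $r_{u,v}(G)\ge A_{u,v}(G)$ always, $r_{u,v}(G)-A_{u,v}(G)\in\{0,1\}$; when it equals $0$, either $A_{u,v}=1$ and $d_{u,v}=1$ admits no interior vertex, or $r_{u,v}=0$ and no interior $w$ can have $r_{u,w}(G)=r_{w,v}(G)=1$ (that would force $r_{u,v}(G)=1$), so in both cases $\sum_{w}\delta_{u,v}^w(G)=2$; when it equals $1$ (so $A_{u,v}=0$, $r_{u,v}=1$), the second vertex of a shortest path is an interior vertex on it, giving $\sum_w\delta_{u,v}^w(G)\ge 3$, while the sum is at most $|[n]|=n$; this matches the bounds in~\eqref{eq:C7}. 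For $\mathcal C_8$ (constraint~\eqref{eq:C8}) the case $\delta_{u,v}^w=1$ is the sub-path-optimality identity $d_{u,v}=d_{u,w}+d_{w,v}$; the case $\delta_{u,v}^w=0$ splits on whether $r_{u,w}(G)=r_{w,v}(G)=1$, where the triangle inequality $d_{u,v}\le d_{u,w}+d_{w,v}$ (strict by our choice of $\delta$ when both are reachable) together with $0\le d\le n$ make all four big-M inequalities (constants $n+1$ and $2n$) valid.

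The one genuinely delicate point, where I expect to spend most care, is checking that the single assignment ``$\delta_{u,v}^w(G)=1$ iff $w$ is on some shortest $u$-$v$ path'' simultaneously satisfies $\mathcal C_6$, $\mathcal C_7$, and both halves of $\mathcal C_8$; in particular, when $u$ and $v$ admit several shortest paths the set of such $w$ can strictly exceed a single path, and one must verify this never breaches the upper bound $2+(n-2)(r_{u,v}-A_{u,v})$ in~\eqref{eq:C7}. Once that assignment is fixed, the remainder is mechanical verification that the listed big-M constants are large enough, using only $A_{u,v},r_{u,v},\delta_{u,v}^w\in\{0,1\}$ and $0\le d_{u,v}\le n$.
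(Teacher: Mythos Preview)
Your proposal is correct and takes essentially the same approach as the paper: the paper's proof is a single sentence (``by definition, it is easy to check that the values satisfy $\mathcal C_1$--$\mathcal C_8$''), and you are simply carrying out that check explicitly. Your one flagged worry---that with multiple shortest paths the count $\sum_w\delta_{u,v}^w$ might exceed the upper bound in~\eqref{eq:C7}---is not actually delicate: when $r_{u,v}-A_{u,v}=1$ the upper bound is $2+(n-2)=n$, which is trivially satisfied since the sum ranges over $n$ indices.
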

\begin{proof}
    By definition, it is easy to check that $\{A_{u,v}(G),r_{u,v}(G),d_{u,v}(G),\delta_{u,v}^w(G)\}_{u,v,w\in [n]}$ satisfies Conditions ($\mathcal C_1$)--($\mathcal C_8$). 
\end{proof}

The opposite is non-trivial to prove, that is, any feasible solution of Eq.~\eqref{eq:general_encoding} corresponds to an unique graph with $\sum_{v\in [n]}A_{v,v}$ nodes, which is guaranteed by Theorem \ref{thm:uniqueness}.

\begin{theorem}\label{thm:uniqueness}
    There is a bijection between the feasible domain restricted by Eq.~\eqref{eq:general_encoding} and the whole graph space with node numbers ranging from $n_0$ to $n$.
\end{theorem}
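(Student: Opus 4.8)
The plan is to make the bijection explicit and then show it is inverse to the natural map coming from Lemma~\ref{lm:existence}. Define $\Phi$ on the graph space by sending a graph $G$ with $m\in\{n_0,\dots,n\}$ nodes (labelled $0,\dots,m-1$) to the variable assignment $(A_{u,v}(G),r_{u,v}(G),d_{u,v}(G),\delta_{u,v}^w(G))_{u,v,w\in[n]}$, where the $n-m$ unused slots are declared non-existent, non-reaching, and at distance $n$. Exactly as in Lemma~\ref{lm:existence} (the case $m=n$, $n_0=n$), one verifies that $\Phi(G)$ satisfies ($\mathcal C_1$)--($\mathcal C_8$), with the unused slots handled by \eqref{eq:C2}; hence $\Phi$ maps into the feasible set of \eqref{eq:general_encoding}. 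Conversely, define $\Psi$ on the feasible set by sending $(A,r,d,\delta)$ to the graph with node set $\{0,\dots,m-1\}$, $m:=\sum_{v\in[n]}A_{v,v}$, and edge set $\{(u,v):u\ne v,\ A_{u,v}=1\}$: this is well defined with $m\in\{n_0,\dots,n\}$ nodes because \eqref{eq:C1} forces $m\ge n_0$ and makes the existing indices a prefix, and \eqref{eq:C2} forces $A_{u,v}=0$ as soon as $u\ge m$ or $v\ge m$. Since $\Psi\circ\Phi=\mathrm{id}$ (the adjacency block of $\Phi(G)$ recovers $G$), $\Phi$ is injective, and it remains to show $\Phi$ is onto, i.e. $\Phi(\Psi(x))=x$ for every feasible $x$.

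Fix a feasible $x=(A,r,d,\delta)$, set $G=\Psi(x)$, and denote by $\hat r_{u,v},\hat d_{u,v},\hat\delta_{u,v}^w$ the true reachability, shortest distance, and shortest-path incidence in $G$ (with the Table~\ref{tab:variables} conventions); the goal is $r=\hat r$, $d=\hat d$, $\delta=\hat\delta$. First, $r\ge\hat r$ entrywise: $r$ is reflexive by \eqref{eq:C3}, contains the edge relation by \eqref{eq:C4}, and is transitive since the right inequality of \eqref{eq:C6} says $r_{u,w}+r_{w,v}\le 1+r_{u,v}$; thus $r$ contains the reflexive--transitive closure of the edges among existing nodes, which is precisely $\hat r$ (non-existent pairs are forced to $0$ by \eqref{eq:C2}). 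Second, $d\le\hat d$: when $\hat d_{u,v}=n$ this is the domain bound, and otherwise, along a shortest simple path $u=x_0\to\cdots\to x_\ell=v$ in $G$ one has $d_{x_i,x_{i+1}}=1$ by \eqref{eq:C4}, and an induction along this path (using the triangle inequality $d_{a,b}\le d_{a,c}+d_{c,b}$, valid whenever $r_{a,c}=r_{c,b}=1$ by the left inequality of \eqref{eq:C8}, together with $r\ge\hat r$ for the intermediate reachabilities) gives $d_{u,v}\le\ell=\hat d_{u,v}$.

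The reverse inequality $d\ge\hat d$ is the heart of the argument; I would prove $d_{u,v}\ge\hat d_{u,v}$ by strong induction on $k:=d_{u,v}$. The cases $k=0$ (which forces $u=v$ by the lower bound of \eqref{eq:C4}, and then $\hat d_{v,v}=0$) and $k=n$ (trivially $\hat d_{u,v}\le n$) are immediate, so assume $u\ne v$ and $1\le k\le n-1$. If $A_{u,v}=1$, then \eqref{eq:C4} forces $k=1$ and $(u,v)$ is an edge of $G$, so $\hat d_{u,v}=1=k$. If $A_{u,v}=0$, then \eqref{eq:C4} gives $k\ge2$, and \eqref{eq:C5} gives $r_{u,v}=1$ (else $n-d_{u,v}\le0$ contradicts $k\le n-1$); the third part of \eqref{eq:C7} then yields a node $w\notin\{u,v\}$ with $\delta_{u,v}^w=1$, the left inequality of \eqref{eq:C6} gives $r_{u,w}=r_{w,v}=1$, and \eqref{eq:C8} collapses to $d_{u,v}=d_{u,w}+d_{w,v}$. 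Since $d_{u,w},d_{w,v}\ge1$ (lower bound of \eqref{eq:C4}), both are $\le k-1$, so by induction $\hat d_{u,w}\le d_{u,w}$ and $\hat d_{w,v}\le d_{w,v}$ (both finite, so $u$ reaches $w$ reaches $v$ in $G$), and the triangle inequality in $G$ gives $\hat d_{u,v}\le\hat d_{u,w}+\hat d_{w,v}\le d_{u,w}+d_{w,v}=k$. Hence $d=\hat d$, and then $r=\hat r$ follows from \eqref{eq:C5} ($r_{u,v}=1\iff d_{u,v}<n$) together with \eqref{eq:C3}. Finally, with $A,r,d$ now fixed, \eqref{eq:C6}--\eqref{eq:C8} determine $\delta$ uniquely: \eqref{eq:C7} forces $\delta_{u,v}^u=\delta_{u,v}^v=1$; for $w\notin\{u,v\}$ the left inequality of \eqref{eq:C6} forces $\delta_{u,v}^w=0$ unless $r_{u,w}=r_{w,v}=1$, and in that case \eqref{eq:C8} forces $\delta_{u,v}^w=1$ iff $d_{u,v}=d_{u,w}+d_{w,v}$. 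Since $\hat\delta$ is a feasible value of $\delta$ for these $A,r,d$ by Lemma~\ref{lm:existence}, uniqueness yields $\delta=\hat\delta$, so $x=\Phi(\Psi(x))$ and $\Phi$ is a bijection.

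The feasibility of $\Phi(G)$ and the inequality $d\le\hat d$ are routine; the delicate point is the distance induction, where one must induct on the \emph{solution} value $d_{u,v}$ and invoke \eqref{eq:C7} to produce an interior node $w$ whose two sub-distances are strictly smaller, so that \eqref{eq:C8} combined with the genuine triangle inequality in $G$ closes the induction. Some care is also needed to clear the non-existent-node and $u=v$ corner cases via \eqref{eq:C1}--\eqref{eq:C3} before running the main argument.
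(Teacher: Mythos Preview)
Your proof is correct and proceeds along a genuinely different route from the paper's. The paper runs a single strong induction on $\min(d_{u,v}(G),d_{u,v})$, establishing simultaneously that $r_{u,v}$, $d_{u,v}$, and $\delta_{u,v}^w$ match their graph counterparts for each pair $(u,v)$ as this minimum increases; it handles the two cases $d_{u,v}(G)=sd+1$ and $d_{u,v}=sd+1$ symmetrically and folds the $\delta$ equality into the same induction. You instead decouple the pieces: first $r\ge\hat r$ from transitivity via \eqref{eq:C6}, then $d\le\hat d$ by walking the true shortest path with the triangle inequality from \eqref{eq:C8}, then the key lower bound $d\ge\hat d$ by strong induction on $d_{u,v}$ alone (using \eqref{eq:C7} to produce an interior witness $w$ and \eqref{eq:C8} to split the distance), and only at the end recover $r=\hat r$ from \eqref{eq:C5} and pin down $\delta$ by a clean uniqueness argument. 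Your decomposition makes the individual roles of the conditions more visible (transitivity of $r$, the ``there exists an interior node'' content of \eqref{eq:C7}, the equality case of \eqref{eq:C8}) and avoids carrying $\delta$ through the induction; the paper's approach, by contrast, is more symmetric between the graph side and the encoding side and dispatches everything in one pass. One small point to make explicit in your write-up: the path induction for $d\le\hat d$ needs the three indices in \eqref{eq:C8} to be distinct, so the step $i=1$ should appeal directly to \eqref{eq:C4} rather than to the triangle inequality with $w=u$.
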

\begin{proof}
    Denote $\mathcal F_{n_0,n}$ as the feasible domain restricted by Eq.~\eqref{eq:general_encoding}, and $\mathcal G_{n_0,n}$ as the whole graph space with node numbers ranging from $n_0$ to $n$. Define the following mapping:
    \begin{equation*}
        \begin{aligned}
            \mathcal M_{n_0,n}:\mathcal F_{n_0,n}&\to\mathcal G_{n_0,n}\\
            \{A_{u,v},r_{u,v},d_{u,v},\delta_{u,v}^w\}_{u,v,w\in [n]}&\mapsto \{A_{u,v}\}_{u,v\in [n]}
        \end{aligned}
    \end{equation*}
    For simplicity, we still use a $n\times n$ adjacency matrix to define a graph with less than $n$ nodes, and use $A_{v,v}(G)$ to represent the existence of node $v$. Subscriptions, e.g., $\{\}_{u,v,w\in [n]}$, are omitted. 
    
    If $n_1=\sum_{v\in [n]}A_{v,v}<n$, Condition $(\mathcal C_1)$ forces that:
    \begin{equation*}
        \begin{aligned}
            A_{v,v}=
            \begin{cases}
            1,&v\in [n_1]\\
            0,&v\in [n]\backslash [n_1]
            \end{cases}
        \end{aligned}
    \end{equation*}
    For any pair of $(u,v)$ with $u\neq v$ and $\max(u,v)\ge n_1$, Conditions $(\mathcal C_2)$ and $(\mathcal C_7)$ uniquely define $\{r_{u,v},d_{u,v},\delta_{u,v}^w\}$ as:
    \begin{equation*}
        \begin{aligned}
            r_{u,v}=0,~d_{u,v}=n,~\delta_{u,v}^w=\begin{cases}
                1,&w\in  \{u,v\}\\
                0,&w\not\in \{u,v\}
            \end{cases}
        \end{aligned}
    \end{equation*}
    It is equivalent to show that $\mathcal M_{n,n}$ is a bijection. Since Lemma \ref{lm:existence} shows that $\mathcal M_{n,n}$ is a surjection, it suffices to prove that $\mathcal M_{n,n}$ is an injection. Precisely, for any feasible solution $\{A_{u,v},r_{u,v},d_{u,v},\delta_{u,v}^w\}$, there exists a graph $G$ with adjacency matrix given by $\{A_{u,v}(G)\}=\{A_{u,v}\}$, such that:
    \begin{equation}\label{eq:bijection}
        \begin{aligned}
            \{r_{u,v}(G),d_{u,v}(G),\delta_{u,v}^w(G)\}=\{r_{u,v},d_{u,v},\delta_{u,v}^w\}
        \end{aligned}
    \end{equation}
    Since $r_{v,v}(G), d_{v,v}(G),\delta_{v,v}^w(G),\delta_{u,v}^u(G),\delta_{u,v}^v(G)$ are defined for completeness, whose variable counterparts are properly and uniquely defined in Condition $(\mathcal C_3)$ and Condition $(\mathcal C_7)$, we only need to consider all triples $(u,v,w)$ with $u\neq v\neq w$, which will not be specified later for simplicity.

    Now we are going to prove Eq.~\eqref{eq:bijection} holds by induction on $\min(d_{u,v}(G),d_{u,v})<n$.

    When $\min(d_{u,v}(G),d_{u,v})=1$, for any pair of $(u,v)$, we have:
    \begin{equation*}
        \begin{aligned}
            d_{u,v}(G)=1 &\Rightarrow A_{u,v}(G)=1,~r_{u,v}(G)=1,~\delta_{u,v}^w(G)=0 &&\longleftarrow \text{definition}\\
            &\Rightarrow A_{u,v}=1 &&\longleftarrow \text{definition of $\mathcal M_{n,n}$}\\
            &\Rightarrow r_{u,v}=1,~d_{u,v}=1,~\delta_{u,v}^w=0 &&\longleftarrow \text{Conditions $(\mathcal C_4) + (\mathcal C_7)$}
        \end{aligned}
    \end{equation*}
    and:
    \begin{equation*}
        \begin{aligned}
            d_{u,v}=1 &\Rightarrow A_{u,v}=1,~r_{u,v}=1,~\delta_{u,v}^w=0 &&\longleftarrow \text{Conditions $(\mathcal C_4) + (\mathcal C_7)$}\\
            &\Rightarrow A_{u,v}(G)=1 &&\longleftarrow \text{definition of $\mathcal M_{n,n}$}\\
            &\Rightarrow r_{u,v}(G)=1,~d_{u,v}(G)=1,~\delta_{u,v}^w(G)=0 &&\longleftarrow \text{definition}
        \end{aligned}
    \end{equation*}
    For both cases, Eq.~\eqref{eq:bijection} holds.

    Assume that Eq.~\eqref{eq:bijection} holds for any pair of $(u,v)$ with $\min(d_{u,v}(G),d_{u,v})\le sd$ with $sd<n-1$. Consider the following two cases for $\min(d_{u,v}(G),d_{u,v})=sd+1<n$.

    \textbf{Case I:} If $d_{u,v}(G)=sd+1$, we know that $r_{u,v}(G)=1$ since the shortest distance from node $u$ to node $v$ is finite. For any $w\not\in\{u,v\}$ such that $\delta_{u,v}^w(G)=1$, we have:
    \begin{equation*}
        \begin{aligned}
            \delta_{u,v}^w(G)=1 &\Rightarrow d_{u,w}(G)+d_{w,v}(G)=d_{u,v}(G)&&\longleftarrow\text{definition of $\delta_{u,v}^w(G)$} \\
             &\Rightarrow \max(d_{u,w}(G),d_{w,v}(G))\le sd &&\longleftarrow \text{$d_{u,w}(G)>0, d_{w,v}(G)>0$}\\
             &\Rightarrow d_{u,w}=d_{u,w}(G),~d_{w,v}=d_{w,v}(G) &&\longleftarrow\text{assumption of induction}\\
             & \Rightarrow r_{u,w}=r_{w,v}=1&&\longleftarrow\text{Condition $(\mathcal C_5)$} \\
             &\Rightarrow d_{u,v}\le d_{u,w}+d_{w,v}=sd+1&&\longleftarrow\text{Condition $(\mathcal C_8)$}\\
             &\Rightarrow d_{u,v}=sd+1 &&\longleftarrow\text{$d_{u,v}\ge sd+1$}\\
             &\Rightarrow r_{u,v}=1,~\delta_{u,v}^w=1&&\longleftarrow\text{Conditions $(\mathcal C_5)+(\mathcal C_8)$}
        \end{aligned}
    \end{equation*}
    which means that $r_{u,v}=r_{u,v}(G),~d_{u,v}=d_{u,v}(G),~\delta_{u,v}^w=\delta_{u,v}^w(G)$ with $\delta_{u,v}^w(G)=1$.
    
    For any $w\not\in\{u,v\}$ such that $\delta_{u,v}^w(G)=0$. If $\delta_{u,v}^w=1$, then we have:
    \begin{equation*}
        \begin{aligned}
            \delta_{u,v}^w=1&\Rightarrow r_{u,w}=r_{w,v}=1,~d_{u,v}=d_{u,w}+d_{w,v}&&\longleftarrow\text{Conditions $(\mathcal C_6)+(\mathcal C_8)$}\\
            &\Rightarrow \max(d_{u,w},d_{w,v})\le sd &&\longleftarrow\text{$d_{u,w}>0,~d_{w,v}>0$}\\
            &\Rightarrow d_{u,w}(G)=d_{u,w},~d_{w,v}(G)=d_{w,v}&&\longleftarrow\text{assumption of induction}\\
            &\Rightarrow d_{u,w}(G)+d_{w,v}(G)=sd+1=d_{u,v}(G)&&\longleftarrow\text{$d_{u,v}(G)=d_{u,v}=sd+1$}\\
            &\Rightarrow\delta_{u,v}^w(G)=1&&\longleftarrow\text{definition of $\delta_{u,v}^w(G)$}
        \end{aligned}
    \end{equation*}
    which contradicts $\delta_{u,v}^w(G)=0$. Thus $\delta_{u,v}^w=0=\delta_{u,v}^w(G)$ with $\delta_{u,v}^w(G)=0$.

    \textbf{Case II:} If $d_{u,v}=sd+1$, from $sd+1>1$ and Condition $(\mathcal C_4)$ we know that $A_{u,v}=0$, from Condition $(\mathcal C_5)$ we have $r_{u,v}=1$, and then from Condition $(\mathcal C_7)$ we obtain that $\sum_{w\in [n]}\delta_{u,v}^w>2$. 

    For any $w\not\in\{u,v\}$ such that $\delta_{u,v}^w=1$, we have:
    \begin{equation*}
        \begin{aligned}
            \delta_{u,v}^w=1 &\Rightarrow d_{u,w}+d_{w,v}=d_{u,v}=sd+1&&\longleftarrow\text{Condition $(\mathcal C_8)$} \\
             &\Rightarrow \max(d_{u,w},d_{w,v})\le sd &&\longleftarrow \text{$d_{u,w}>0, d_{w,v}>0$}\\
             &\Rightarrow d_{u,w}(G)=d_{u,w},~d_{w,v}(G)=d_{w,v} &&\longleftarrow\text{assumption of induction}\\
             &\Rightarrow d_{u,v}(G)\le d_{u,w}(G)+d_{w,v}(G)=sd+1&&\longleftarrow\text{definition of $d_{u,v}(G)$}\\
             &\Rightarrow d_{u,v}(G)=sd+1 &&\longleftarrow\text{$d_{u,v}(G)\ge sd+1$}\\
             &\Rightarrow r_{u,v}(G)=1,~\delta_{u,v}^w(G)=1&&\longleftarrow\text{definition}
        \end{aligned}
    \end{equation*}
    which means that $r_{u,v}(G)=r_{u,v},~d_{u,v}(G)=d_{u,v},~\delta_{u,v}^w(G)=\delta_{u,v}^w$ with $\delta_{u,v}^w=1$.

    For any $w\not\in\{u,v\}$ such that $\delta_{u,v}^w=0$. If $\delta_{u,v}^w(G)=1$, then we have:
    \begin{equation*}
        \begin{aligned}
            \delta_{u,v}^w(G)=1&\Rightarrow d_{u,v}(G)=d_{u,w}(G)+d_{w,v}(G)&&\longleftarrow\text{definition of $\delta_{u,v}^w(G)$}\\
            &\Rightarrow \max(d_{u,w}(G),d_{w,v}(G))\le sd &&\longleftarrow\text{$d_{u,w}(G)>0,~d_{w,v}(G)>0$}\\
            &\Rightarrow d_{u,w}=d_{u,w}(G),~d_{w,v}=d_{w,v}(G)&&\longleftarrow\text{assumption of induction}\\
            &\Rightarrow d_{u,w}+d_{w,v}=sd+1=d_{u,v}&&\longleftarrow\text{$d_{u,v}(G)=d_{u,v}=sd+1$}\\
            &\Rightarrow\delta_{u,v}^w=1&&\longleftarrow\text{Condition $(\mathcal C_8)$}
        \end{aligned}
    \end{equation*}
    which contradicts $\delta_{u,v}^w=0$. Thus $\delta_{u,v}^w(G)=0=\delta_{u,v}^w$ with $\delta_{u,v}^w=0$.

    The remaining case is $d_{u,v}(G)=d_{u,v}=n$, i.e., node $u$ cannot reach node $v$, which is verified by:
    \begin{equation*}
        \begin{aligned}
            r_{u,v}&=0=r_{u,v}(G)&&\longleftarrow\text{Condition $(\mathcal C_5)$, definition of $r_{u,v}(G)$}\\
            \delta_{u,v}^w&=0=\delta_{u,v}^w(G)&&\longleftarrow\text{Condition $(\mathcal C_7)$, definition of $\delta_{u,v}^w(G)$}
        \end{aligned}
    \end{equation*}
    
    Therefore, Eq.~\eqref{eq:bijection} always holds, which completes the proof.
\end{proof}

Graph encoding Eq.~\eqref{eq:general_encoding} does not make assumptions over the graph space, showing its generality and flexibility to various graph-based tasks. Meanwhile, Eq.~\eqref{eq:general_encoding} may be easily restricted to specific graph types. For instance,

\textbf{Undirected graphs:} Add symmetry constraints to get undirected graphs:
\begin{equation}\label{eq:undirect}
    \begin{aligned}
    A_{u,v}=A_{v,u},~r_{u,v}=r_{v,u},~d_{u,v}=d_{v,u},~\delta_{u,v}^w=\delta_{v,u}^w,~\forall u,v,w\in [n],~u<v
    \end{aligned}
\end{equation}

\begin{remark}
$A_{u,v}=A_{v,u}$ is enough for undirected graphs, while the rest constraints are introduced to tighten the continuous relaxation of our encoding.
\end{remark}

\textbf{Connected undirected graphs \& Strongly connected digraphs:} Each existing node can reach all other existing nodes, i.e.,
\begin{equation*}
    \begin{aligned}
        A_{u,u}=A_{v,v}=1\Rightarrow r_{u,v}=1,~\forall u,v\in [n],~u\neq v
    \end{aligned}
\end{equation*}
which can be equivalently rewritten as the following constraint:
\begin{equation}\label{eq:strong_connectivity}  
    \begin{aligned}
        r_{u,v}\ge A_{u,u}+A_{v,v}-1,~\forall u,v\in [n],~u\neq v
    \end{aligned}
\end{equation}

\textbf{Acyclic digraphs (DAGs):} No pair of nodes can reach each other, i.e.,
\begin{equation}\label{eq:DAG}
    \begin{aligned}
        r_{u,v}+r_{v,u}\le 1,~\forall u,v\in [n],~u<v
    \end{aligned}
\end{equation}

\textbf{DAGs with one source and one sink (st-DAGs):} If two existing nodes have no in-neighbors (or out-neighbors, respectively), i.e., zero in-degree (out-degree, respectively), then they are identical:
\begin{equation*}
    \begin{aligned}
        \sum\limits_{w\neq u}A_{w,u}=0\land \sum\limits_{w\neq v}A_{w,v}=0\land A_{u,u}=A_{v,v}=1&\Rightarrow u=v,~\forall u,v\in [n]\\
        \sum\limits_{w\neq u}A_{u,w}=0\land \sum\limits_{w\neq v}A_{v,w}=0\land A_{u,u}=A_{v,v}=1&\Rightarrow u=v,~\forall u,v\in [n]
    \end{aligned}
\end{equation*}
which can be linearly represented as:
\begin{equation}\label{eq:st-DAG}
    \begin{aligned}
        &\sum\limits_{w\neq u}A_{w,u}+\sum\limits_{w\neq v}A_{w,v}\ge A_{u,u}+A_{v,v}-1,~\forall u,v\in [n],~u\neq v\\
        &\sum\limits_{w\neq u}A_{u,w}+\sum\limits_{w\neq v}A_{v,w}\ge A_{u,u}+A_{v,v}-1,~\forall u,v\in [n],~u\neq v
    \end{aligned}
\end{equation}

st-DAGs are special cases of weakly connected graphs, i.e., graphs with connected underlying graphs. Weak connectivity is very hard to formulate if one purely uses variables in Table \ref{tab:variables}, since it is not defined over the original graph but its underlying graph, i.e., removing all directions of edges. A straightforward way is applying Eqs.~\eqref{eq:general_encoding} and \eqref{eq:strong_connectivity} to ensure the connectivity of the underlying graphs. For simplicity, we do not write the full encoding here since it is just repeating Eq.~\eqref{eq:general_encoding} twice (once for the original graph space, once for the underlying graph space), as well as the following constraints linking graph $G$ with its underlying graph $G^U$:
\begin{equation}\label{eq:underlying_graph}
    \begin{aligned}
        A^U_{v,v}=A_{v,v},~A_{u,v}+A_{v,u}\le 2\cdot A^U_{u,v} \le 2\cdot (A_{u,v}+A_{v,u}),~\forall u,v\in [n],~u\neq v
    \end{aligned}
\end{equation}
where $A^U$ is the adjacency matrix of $G^U$, and the last constraint is rewritten from:
\begin{equation*}
    \begin{aligned}
        A^U_{u,v}=\max(A_{u,v},A_{v,u}),~\forall u,v\in [n],~u\neq v
    \end{aligned}
\end{equation*}

\section{Simplified Encoding using Connectivity}\label{sec:simplified_encoding}
Graph encoding proposed in Section \ref{sec:general_encoding} introduces $O(n^3)$ binary variables and $O(n^3)$ constraints to handle multiple strongly connected components and maintain the shortest paths between any pair of nodes. This section shows that how to simplify Eq.~\eqref{eq:general_encoding} given strong connectivity, i.e., using only $O(n^2)$ binary variables and $O(n^2)$ constraints. The cost is losing the shortest paths between any pair of nodes.  

Strong connectivity means that $A_{u,u}=A_{v,v}=1\Rightarrow r_{u,v}=1,~\forall u,v\in [n]$. Alternatively, it could be achieved by forcing $A_{v,v}=1\Rightarrow r_{s,v}=r_{v,s}=1,~\forall v\in [n]$, where $s$ is any existed node, i.e., $A_{s,s}=1$. Note that $r_{v,s}=1$ over $G$ is equivalent to $r_{s,v}=1$ in the transpose of $G$, i.e., reversing all directions of edges. W.l.o.g., assume $s=0$ since node $0$ always exists due to Condition $(\mathcal C_1)$. Precisely, denote $\mathcal G_{n_0,n}^{\to}\subset\mathcal G_{n_0,n}$ as the set of graphs  where node $0$ can reach all existed nodes, and $\mathcal G_{n_0,n}^{\gets}\subset\mathcal G_{n_0,n}$ as the set of graphs that all existed nodes can reach node $0$. Then the set of strongly connected graphs, denoted by $\mathcal G_{n_0,n}^S$, is the intersection of $\mathcal G_{n_0,n}^{\to}$ and $\mathcal G_{n_0,n}^{\gets}$.

\begin{remark}
    Observe that graphs in $\mathcal G_{n_0,n}^{\gets}$ are transpose of graphs in $\mathcal G_{n_0,n}^{\to}$. If we could encode graphs in $\mathcal G_{n_0,n}^{\to}$, then encoding $\mathcal G_{n_0,n}^{\gets}$ is simply done by transposing the adjacency matrix $\{A_{u,v}\}_{\forall u,v\in [n]}$. For simplicity, we only present the encoding of $\mathcal G_{n_0,n}^{\to}$.
\end{remark}

Instead of considering all shortest paths, we only maintain the shortest paths from node $0$ to node $v\in V$, which could be updated by combining a shortest path from node $0$ to node $u$ and edge $u\to v$. Our idea is quite similar to using breadth first search (BFS) to calculate single-source shortest paths. The difference is that BFS is conducted over a given graph, while our constraints are used to find all graphs and obtain the corresponding shortest paths simultaneously.

\begin{table}[]
    \centering
    \caption{Variables introduced to encode $\mathcal G_{n_0,n}^{\to}$.
    Since the shortest distance from node $0$ to any existed node is always less than $n$, we use $d_{v}=n$ to denote that node $v$ does not exist.}
    \label{tab:variables_simplified}
    \begin{tabular}{ccc}
        \toprule
         Variables & Domain & Description \\  
        \midrule
        $A_{v,v},~v\in [n]$ & $\{0,1\}$ & if node $v$ exists\\
        $A_{u,v},~u,v\in [n],~u\neq v$ & $\{0,1\}$ &  if edge $u\to v$ exists \\
        $d_{v},~v\in [n]$ & $[n+1]$ &  the shortest distance from node $0$ to node $v$ \\
        $\gamma_{u,v},~u,v\in [n]$ & $\{0,1\}$ & if $d_v=d_u+1$ and $A_{u,v}=1$\\
        \bottomrule
    \end{tabular}
\end{table}

Table \ref{tab:variables_simplified} lists involved variables in our encoding. As before, we first list necessary conditions that those variables should satisfy, then prove that these conditions are sufficient. 

\textbf{Condition $(\mathcal C^\to_1)$: } Same to Condition $(\mathcal C_1)$. We repeat it here and in Eq.~\eqref{eq:CS1} for completeness.

\textbf{Condition $(\mathcal C^\to_2)$: } Existence of each edge, i.e., edge $u\to v$ does not exist if either node $u$ or node $v$ does not exist. Also, $\gamma_{u,v}=1$ implies that edge $u\to v$ exists by definition, as shown in Eq.~\eqref{eq:CS2}.

\textbf{Condition $(\mathcal C^\to_3)$: } Construction of shortest path, i.e., if node $v\neq 0$ exists, then it has at least one in-neighbor $u$ appearing on the shortest path from node $0$ to node $v$. Otherwise, we need to properly define $\gamma_{u,v}$ for node $0$ and nonexistent nodes:
\begin{equation*}
    \begin{aligned}
        A_{v,v}=1\land v\neq 0&~\Rightarrow \sum\limits_{u\in [n]}\gamma_{u,v}\ge 1,&&\forall v\in [n]\\
        A_{v,v}=0\lor v=0&~\Rightarrow \sum\limits_{u\in [n]}\gamma_{u,v}=0,&&\forall v\in [n]
    \end{aligned}
\end{equation*}
which could be equivalently represented as Eq.~\eqref{eq:CS3} with $n-1$ as a big-M coefficient. Note that we also initialize $\gamma_{v,v}=0$ in Eq.~\eqref{eq:CS3} for well-definedness.

\textbf{Condition $(\mathcal C^\to_4)$: } Initialization of shortest distance, i.e., the shortest distance from node $0$ to itself is $0$, to an existed node ranges from $1$ to $n-1$, and to a nonexistent node is infinity, i.e., $n$:
\begin{equation*}
    \begin{aligned}
        A_{v,v}=1&~\Rightarrow 1\le d_v<n,&&\forall v\in [n]\backslash\{0\}\\
        A_{v,v}=0&~\Rightarrow d_v=n, && \forall v\in [n]\backslash\{0\}
    \end{aligned}
\end{equation*}
which could be linearly expressed as Eq.~\eqref{eq:CS4} with $n-1$ as a big-M coefficient.

\textbf{Condition $(\mathcal C^\to_5)$: } Update of shortest distance, i.e., for any in-neighbor $u$ of node $v$, the shortest distance from node $0$ to node $v$ is no more than the shortest distance from node $0$ to node $u$ plus one. Furthermore, the equality holds if node $u$ appears on the shortest path from node $0$ to node $v$:
\begin{equation*}
    \begin{aligned}
        A_{u,v}=1\land \gamma_{u,v}=1&~\Rightarrow d_v=d_u+1,&&\forall u,v\in [n],~u\neq v\\
        A_{u,v}=1\land \gamma_{u,v}=0&~\Rightarrow d_v<d_u+1,&&\forall u,v\in [n],~u\neq v
    \end{aligned}
\end{equation*}
Similar to Eq.~\eqref{eq:C8}, we represent this condition as Eq.~\eqref{eq:CS5} with $n$ and $n+1$ as big-M coefficients.

Putting Conditions $(\mathcal C^\to_1)$--$(\mathcal C^\to_5)$ together yields the following encoding:
\begin{subequations}\label{eq:simplified_encoding}
    \begin{align}
        & \sum\limits_{v\in [n]}A_{v,v}\ge n_0,~A_{v,v}\ge A_{v+1,v+1}&&\forall v\in [n-1]\label{eq:CS1}\\
        & 2\cdot A_{u,v}\le A_{u,u}+A_{v,v},~A_{u,v}\ge \gamma_{u,v}&&\forall u,v\in [n],~u\neq v\label{eq:CS2}\\
        & \sum\limits_{u\in [n]}\gamma_{u,0}=0,~\gamma_{v,v}=0,~A_{v,v}\le \sum\limits_{u\in [n]}\gamma_{u,v}\le (n-1)\cdot A_{v,v}&&\forall v\in [n]\backslash\{0\}\label{eq:CS3}\\
        & d_0=0,~1+(n-1)\cdot (1-A_{v,v})\le d_v\le n-A_{v,v}&&\forall v\in [n]\backslash\{0\}\label{eq:CS4}\\
        & (1-\gamma_{u,v})-n\cdot (1-A_{u,v})\le d_u+1-d_v\le (n+1)\cdot(1-\gamma_{u,v})&&\forall u,v\in [n],~u\neq v\label{eq:CS5}
    \end{align}   
\end{subequations}

\begin{theorem}\label{thm:bijection_simplified}
    There is a bijection between the feasible domain restricted by Eq.~\eqref{eq:simplified_encoding}  and graph space $\mathcal G_{n_0,n}^{\to}$.
\end{theorem}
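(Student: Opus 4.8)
The strategy is to replicate, in the single-source setting, the argument behind Theorem~\ref{thm:uniqueness}. Write $\mathcal F^{\to}_{n_0,n}$ for the feasible set of Eq.~\eqref{eq:simplified_encoding} and let $\mathcal M^{\to}\colon\{A_{u,v},d_v,\gamma_{u,v}\}\mapsto\{A_{u,v}\}$ be the projection onto the adjacency variables. As in Theorem~\ref{thm:uniqueness}, I would first note that $(\mathcal C^\to_1)$ fixes which $n_1:=\sum_v A_{v,v}$ nodes exist, that $(\mathcal C^\to_2)$ then kills every edge touching a non-existent node, that $(\mathcal C^\to_4)$ forces $d_v=n$ there, and that $(\mathcal C^\to_3)$ forces $\gamma_{u,v}=0$ whenever $u=v$, $v=0$, or $v$ is non-existent; this reduces the claim to showing that $\mathcal M^{\to}$ is a bijection from $\mathcal F^{\to}_{n,n}$ onto $\mathcal G^{\to}_{n,n}$, i.e.\ the case where all $n$ nodes exist and node $0$ reaches all of them.

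Next I would show that the image of $\mathcal M^{\to}$ lies in $\mathcal G^{\to}_{n,n}$ and, at the same time, obtain one half of the key distance identity. Given a feasible point and any $v\neq 0$, $(\mathcal C^\to_3)$ supplies some $u$ with $\gamma_{u,v}=1$; by $(\mathcal C^\to_2)$ this forces $A_{u,v}=1$ (hence $A_{u,u}=A_{v,v}=1$) and by $(\mathcal C^\to_5)$ it forces $d_v=d_u+1$. Iterating produces a chain $v=v_0,v_1,v_2,\dots$ with $A_{v_{i+1},v_i}=1$ and $d_{v_i}=d_{v_{i+1}}+1$; since $d$ is a nonnegative integer dropping by exactly one at each step and $(\mathcal C^\to_4)$ makes $0$ the unique node with $d=0$, the chain reaches $0$ after precisely $d_v$ steps, yielding a directed walk from $0$ to $v$ of length $d_v$ in $G:=\mathcal M^{\to}(\cdot)$. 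Hence $G\in\mathcal G^{\to}_{n,n}$, and also $d_v(G)\le d_v$.

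For injectivity I would fix $G\in\mathcal G^{\to}_{n,n}$, consider any feasible point whose adjacency part equals $A(G)$, and prove $d_v=d_v(G)$ by strong induction on $d_v(G)$. The base case $d_v(G)=0$ gives $v=0$, while $(\mathcal C^\to_4)$ gives $d_0=0$. For $d_v(G)=k\ge1$, the previous paragraph already gives $d_v\ge k$; conversely, taking a true shortest path $0=w_0\to\cdots\to w_k=v$ in $G$ (so $d_{w_{k-1}}(G)=k-1$), the induction hypothesis gives $d_{w_{k-1}}=k-1$, and the left inequality of $(\mathcal C^\to_5)$ with $A_{w_{k-1},v}=1$ forces $d_v\le d_{w_{k-1}}+1=k$ regardless of $\gamma_{w_{k-1},v}$. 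Thus $d_v=k=d_v(G)$. With $d$ pinned to $d(G)$, $(\mathcal C^\to_2)$ and $(\mathcal C^\to_5)$ leave no freedom in $\gamma$: $\gamma_{u,v}=1\iff A_{u,v}=1\land d_v=d_u+1$ (the $\Leftarrow$ direction by contradiction against the left inequality of $(\mathcal C^\to_5)$), together with $\gamma_{v,v}=0$. Hence any two feasible points with the same adjacency part coincide, so $\mathcal M^{\to}$ is injective. Surjectivity is the counterpart of Lemma~\ref{lm:existence}: for $G\in\mathcal G^{\to}_{n,n}$ set $A=A(G)$, $d_v=d_v(G)$ (finite since $0$ reaches every node), and $\gamma_{u,v}=1$ iff $A_{u,v}=1\land d_v=d_u+1$; a constraint-by-constraint check, using $d_v(G)=\min_{u\colon A_{u,v}=1}(d_u(G)+1)$ for $v\neq0$, shows $(\mathcal C^\to_1)$--$(\mathcal C^\to_5)$ hold and that this point maps to $G$.

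I expect the only genuine difficulty to be the two-sided distance identity in the injectivity step. Its easy half, $d_v\le d_v(G)$, is read off from a real shortest path in $G$; the delicate half, $d_v\ge d_v(G)$, rests on the chain-tracing of the second paragraph together with the fact that $(\mathcal C^\to_4)$ forces node $0$ to be the \emph{unique} node at distance $0$ --- this is exactly what prevents the strictly decreasing $\gamma$-predecessor chain from stalling and guarantees it terminates at $0$ after precisely $d_v$ hops, so that $d_v$ is realised by an honest walk in $G$. The remaining pieces --- the non-existent-node reduction and the verification in the surjectivity step --- are routine and parallel the corresponding parts of Theorem~\ref{thm:uniqueness} and Lemma~\ref{lm:existence}.
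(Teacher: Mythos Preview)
Your argument is correct, and it reaches the same conclusion as the paper, but the mechanics differ in a way worth noting. The paper, as in Theorem~\ref{thm:uniqueness}, runs a single induction on $\min(d_v(G),d_v)$ and handles the two symmetric cases $d_v(G)=sd+1$ and $d_v=sd+1$ in parallel, never explicitly building a walk in $G$. You instead split the identity $d_v=d_v(G)$ into two halves: the inequality $d_v\ge d_v(G)$ comes from your chain-tracing step, which follows $\gamma$-predecessors back to node $0$ and thereby exhibits an actual walk of length $d_v$; the inequality $d_v\le d_v(G)$ comes from a clean induction on $d_v(G)$ using a true shortest path and the left side of $(\mathcal C^\to_5)$. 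Your route has the advantage of making the well-definedness of $\mathcal M^{\to}$ (i.e.\ that its image really lies in $\mathcal G^{\to}_{n,n}$) an explicit by-product of the chain-tracing, something the paper leaves implicit; the paper's symmetric induction is more uniform but hides this point inside the equality $d_v=d_v(G)<n$.
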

\begin{proof}
    Denote $\mathcal F_{n_0,n}^{\to}$ as the feasible domain restricted by Eq.~\eqref{eq:simplified_encoding}. Define the following mapping:
    \begin{equation*}
        \begin{aligned}
            \mathcal M_{n_0,n}^{\to}:\mathcal F_{n_0,n}^{\to}&~\to \mathcal G_{n_0,n}^{\to}\\
            \{A_{u,v},d_v,\gamma_{u,v}\}_{u,v\in [n]}&~\mapsto \{A_{u,v}\}_{u,v\in [n]}
        \end{aligned}  
    \end{equation*}
    All variables involving nonexistent nodes are properly and uniquely defined, and Conditions $(\mathcal C^\to_1)$--$(\mathcal C^\to_5)$ are necessary conditions that any graph in $\mathcal G_{n_0,n}^{\to}$ satisfies. Following a similar statement as the proof of Theorem \ref{thm:uniqueness}, it suffices to show that $\mathcal M_{n,n}^\to$ is an injection, i.e., for any feasible solution $\{A_{u,v},d_v,\gamma_{u,v}\}_{u,v\in [n]}$, there exists a graph $G\in\mathcal G_{n_0,n}^\to$ with adjacency matrix given by $\{A_{u,v}(G)\}=\{A_{u,v}\}$ such that:
    \begin{equation}\label{eq:bijection_simplified}
        \begin{aligned}
            \{d_v(G),\gamma_{u,v}(G)\}=\{d_v,\gamma_{u,v}\}
        \end{aligned}
    \end{equation}
    Since $d_0,\gamma_{u,0},\gamma_{v,v},~\forall u,v\in [n]$ are well-defined by Conditions $(\mathcal C^\to_3)$ and $(\mathcal C^\to_4)$, we only consider all pairs $(u,v)$ with $u\neq v$ and $v\neq 0$, which will not be specified later.

    Now we are going to prove Eq.~\eqref{eq:bijection_simplified} holds by induction on $1\le \min(d_v(G),d_v)<n$.

    When $\min(d_v(G),d_v)=1$, for any node $v$, we have that:
    \begin{equation*}
        \begin{aligned}
            d_v(G)=1&\Rightarrow A_{0,v}(G)=0,~\gamma_{0,v}(G)=1,~\gamma_{u,v}(G)=0,~\forall u\neq 0 &&\longleftarrow\text{definition}\\
            &\Rightarrow A_{0,v}=1&&\longleftarrow\text{definition of $\mathcal M_{n_0,n}^\to$}\\
            &\Rightarrow d_v=1,~\gamma_{0,v}=1&&\longleftarrow\text{Condition $(\mathcal C^\to_5)$}
        \end{aligned}
    \end{equation*}
    and that:
    \begin{equation*}
        \begin{aligned}
            u\neq 0&\Rightarrow d_u>0 &&\longleftarrow\text{Condition $(\mathcal C^\to_4)$}\\
            &\Rightarrow (n+1)\cdot (1-\gamma_{u,v})\ge d_u+1-d_v=d_u>0&&\longleftarrow\text{Condition $(\mathcal C^\to_5)$}\\
            &\Rightarrow \gamma_{u,v}=0
        \end{aligned}
    \end{equation*}
    On the other hand,
    \begin{equation*}
        \begin{aligned}
            d_v=1&\Rightarrow A_{0,v}=1,~\gamma_{0,v}=1,~\gamma_{u,v}=0,~\forall u\neq 0&&\longleftarrow\text{Condition $(\mathcal C^\to_5)$}\\
            &\Rightarrow A_{0,v}(G)=1&&\longleftarrow\text{definition of $\mathcal M_{n_0,n}^\to$}\\
            &\Rightarrow d_v(G)=1,~\gamma_{0,v}(G)=1,~\gamma_{u,v}(G)=0,~\forall u\neq 0 &&\longleftarrow\text{definition}
        \end{aligned}
    \end{equation*}
    For both cases, Eq.~\eqref{eq:bijection_simplified} holds.

    Assume that Eq.~\eqref{eq:bijection_simplified} holds for any node $v$ with $\min(d_v(G),d_v)\le sd$ with $sd<n-1$. Consider the following two cases for $\min(d_v(G),d_v)=sd+1<n$.

    \textbf{Case I:} If $d_v(G)=sd+1$, for any $u$ such that $\gamma_{u,v}(G)=1$, we have:
    \begin{equation*}
        \begin{aligned}
            \gamma_{u,v}(G)=1&\Rightarrow A_{u,v}(G)=1,~d_u(G)=sd&&\longleftarrow\text{definition of $\gamma_{u,v}$}\\
            &\Rightarrow A_{u,v}=1,~d_u=sd&&\longleftarrow\text{definition of $\mathcal M_{n_0,n}^\to$, assumption of induction}\\
            &\Rightarrow d_v=sd+1,~\gamma_{u,v}=1&&\longleftarrow\text{Condition $(\mathcal C^\to_5)$, $d_v\ge sd+1$}
        \end{aligned}
    \end{equation*}
    For any $u$ such that $\gamma_{u,v}(G)=0$, we either have:
    \begin{equation*}
        \begin{aligned}
            A_{u,v}(G)=0\Rightarrow A_{u,v}=0\Rightarrow \gamma_{u,v}=0&&\longleftarrow\text{definition of $\mathcal M_{n_0,n}^\to$, Condition $(\mathcal C^\to_2)$}
        \end{aligned}
    \end{equation*}
    or $A_{u,v}(G)=1\Rightarrow A_{u,v}=0$. Assuming $\gamma_{u,v}=1$ gives:
    \begin{equation*}
        \begin{aligned}
            \gamma_{u,v}=1&\Rightarrow d_u=sd &&\longleftarrow\text{Condition $(\mathcal C^\to_5)$}\\
            &\Rightarrow d_u(G)=sd&&\longleftarrow\text{assumption of induction}\\
            &\Rightarrow\gamma_{u,v}(G)=1&&\longleftarrow\text{definition of $\gamma_{u,v}(G)$}
        \end{aligned}
    \end{equation*}
    which contradicts $\gamma_{u,v}(G)=0$. Thus $\gamma_{u,v}=0=\gamma_{u,v}(G)$.
    
    \textbf{Case II:} If $d_v=sd+1$, for any $u$ such that $\gamma_{u,v}=1$, we have:
    \begin{equation*}
        \begin{aligned}
            \gamma_{u,v}=1&\Rightarrow A_{u,v}=1,~d_u=sd&&\longleftarrow\text{Condition $(\mathcal C^\to_5)$}\\
            &\Rightarrow A_{u,v}(G)=1,~d_u(G)=sd&&\longleftarrow\text{definition of $\mathcal M_{n_0,n}^\to$, assumption of induction}\\
            &\Rightarrow d_v(G)=sd+1&&\longleftarrow\text{definition of $d_v(G)$, $d_v(G)\ge sd+1$}\\
            &\Rightarrow \gamma_{u,v}(G)=1&&\longleftarrow\text{definition of $\gamma_{u,v}(G)$}
        \end{aligned}
    \end{equation*}
    For any $u$ such that $\gamma_{u,v}=0$, we either have:
    \begin{equation*}
        \begin{aligned}
            A_{u,v}=0\Rightarrow A_{u,v}(G)=0\Rightarrow \gamma_{u,v}(G)=0&&\longleftarrow\text{definition of $\mathcal M_{n_0,n}^\to$ and $\gamma_{u,v}(G)$}
        \end{aligned}
    \end{equation*}
    or $A_{u,v}=1\Rightarrow A_{u,v}(G)=1$. Assuming that $\gamma_{u,v}(G)=1$ gives:
    \begin{equation*}
        \begin{aligned}
            \gamma_{u,v}(G)=1&\Rightarrow d_u(G)=sd&&\longleftarrow\text{definition of $d_u(G)$}\\
            &\Rightarrow d_u=sd&&\longleftarrow\text{assumption of induction}\\
            &\Rightarrow \gamma_{u,v}=1&&\longleftarrow\text{Condition $(\mathcal C^\to_5)$}
        \end{aligned}
    \end{equation*}
    which contradicts $\gamma_{u,v}=0$. Thus $\gamma_{u,v}(G)=0=\gamma_{u,v}$.
\end{proof}

\begin{lemma}\label{lem:bijection_simplified}
    There is a bijection between $\mathcal F_{n_0,n}^S$ and graph space $\mathcal G_{n_0,n}^S$, where:
    \begin{equation*}
        \begin{aligned}
            \mathcal F_{n_0,n}^S=\{\{A_{u,v},d^\to_v,\gamma^\to_{u,v},d^\gets_v,\gamma^\gets_{u,v}\}~|~\{A_{u,v},d^\to_v,\gamma^\to_{u,v}\},\{A_{v,u},d^\gets_v,\gamma^\gets_{u,v}\}\in \mathcal F_{n_0,n}^\to\}
        \end{aligned}
    \end{equation*}
\end{lemma}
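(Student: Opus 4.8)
The plan is to obtain the bijection by transporting Theorem~\ref{thm:bijection_simplified} along the transpose involution on graphs, so essentially no new induction is needed. First I would fix notation: for a graph $G$ on node set $[n]$ with node existence recorded by $A_{v,v}(G)$, write $G^T$ for its transpose, so that $A_{u,v}(G^T)=A_{v,u}(G)$ while $A_{v,v}(G^T)=A_{v,v}(G)$. The structural fact already recorded in the remark before Theorem~\ref{thm:bijection_simplified} is that $G\in\mathcal G_{n_0,n}^\gets$ exactly when $G^T\in\mathcal G_{n_0,n}^\to$; since $G\mapsto G^T$ is an involution preserving node existence, it restricts to a bijection $\mathcal G_{n_0,n}^\gets\to\mathcal G_{n_0,n}^\to$. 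Hence $\mathcal G_{n_0,n}^S=\mathcal G_{n_0,n}^\to\cap\mathcal G_{n_0,n}^\gets$ sits inside $\mathcal G_{n_0,n}^\to\times\mathcal G_{n_0,n}^\to$ via $G\mapsto(G,G^T)$, with image the diagonal-like set $\{(G_1,G_2):G_2=G_1^T\}$.

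Second, I would define the candidate map $\mathcal M_{n_0,n}^S:\mathcal F_{n_0,n}^S\to\mathcal G_{n_0,n}^S$ sending $\{A_{u,v},d^\to_v,\gamma^\to_{u,v},d^\gets_v,\gamma^\gets_{u,v}\}$ to the graph $G$ with adjacency matrix $\{A_{u,v}\}$, and check well-definedness. By the definition of $\mathcal F_{n_0,n}^S$, the forward block $\{A_{u,v},d^\to_v,\gamma^\to_{u,v}\}$ lies in $\mathcal F_{n_0,n}^\to$, so Theorem~\ref{thm:bijection_simplified} gives $G=\mathcal M_{n_0,n}^\to(\cdot)\in\mathcal G_{n_0,n}^\to$; and the backward block $\{A_{v,u},d^\gets_v,\gamma^\gets_{u,v}\}$ also lies in $\mathcal F_{n_0,n}^\to$, so $G^T\in\mathcal G_{n_0,n}^\to$, i.e.\ $G\in\mathcal G_{n_0,n}^\gets$. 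Therefore $G\in\mathcal G_{n_0,n}^S$.

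Third, both injectivity and surjectivity would follow immediately from Theorem~\ref{thm:bijection_simplified}. For injectivity: if two elements of $\mathcal F_{n_0,n}^S$ map to the same $G$, they share $\{A_{u,v}\}$; the bijectivity in Theorem~\ref{thm:bijection_simplified} says a feasible point of $\mathcal F_{n_0,n}^\to$ is uniquely determined by its adjacency data, so applying this to the two forward blocks forces $d^\to_v,\gamma^\to_{u,v}$ to agree, and applying it to the two backward blocks (which share $\{A_{v,u}\}$) forces $d^\gets_v,\gamma^\gets_{u,v}$ to agree, so the elements coincide. For surjectivity: given $G\in\mathcal G_{n_0,n}^S$, from $G\in\mathcal G_{n_0,n}^\to$ Theorem~\ref{thm:bijection_simplified} produces the unique forward block $\{A_{u,v}(G),d^\to_v,\gamma^\to_{u,v}\}\in\mathcal F_{n_0,n}^\to$, and from $G\in\mathcal G_{n_0,n}^\gets$, i.e.\ $G^T\in\mathcal G_{n_0,n}^\to$, it produces the unique backward block $\{A_{u,v}(G^T),d^\gets_v,\gamma^\gets_{u,v}\}\in\mathcal F_{n_0,n}^\to$; since $A_{u,v}(G^T)=A_{v,u}(G)$, these two blocks are built on the common matrix $\{A_{u,v}(G)\}$, so their union is a genuine element of $\mathcal F_{n_0,n}^S$ mapping to $G$.

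The only point that needs real attention, as opposed to bookkeeping, is the compatibility of the shared adjacency variables across the two blocks: the definition of $\mathcal F_{n_0,n}^S$ couples a forward block on $\{A_{u,v}\}$ with a backward block on $\{A_{v,u}\}$ of the \emph{same} matrix, and one must confirm this coupling is precisely what the correspondence $\mathcal G_{n_0,n}^\gets\leftrightarrow\mathcal G_{n_0,n}^\to$ dictates, so that the preimage produced in the surjectivity step really lands in $\mathcal F_{n_0,n}^S$ rather than merely in the ambient product of two copies of $\mathcal F_{n_0,n}^\to$. Once that identification is spelled out, everything else is a direct transport of Theorem~\ref{thm:bijection_simplified} along the transpose involution.
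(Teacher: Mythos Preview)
Your proposal is correct and follows essentially the same approach as the paper: both arguments apply Theorem~\ref{thm:bijection_simplified} twice, once to $G$ and once to its transpose $G^T$, and then assemble the resulting unique forward and backward blocks into the desired bijection $\mathcal M_{n_0,n}^S$. Your write-up is in fact more explicit than the paper's about well-definedness, injectivity, surjectivity, and the adjacency-compatibility check, but the underlying idea is identical.
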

\begin{proof}
For any $G\in \mathcal G_{n_0,n}^S=\mathcal G_{n_0,n}^\to\cap\mathcal G_{n_0,n}^\gets$, using Theorem \ref{thm:bijection_simplified} over $G$ and its transpose, there uniquely exists $\{A_{u,v},d^\to_v,\gamma^\to_{u,v}\},\{A_{v,u},d^\gets_v,\gamma^\gets_{u,v}\}\in\mathcal F_{n_0,n}^\to$ such that:
\begin{equation*}
    \begin{aligned}
        \mathcal M_{n_0,n}^\to(\{A_{u,v},d^\to_v,\gamma^\to_{u,v}\})=\{A_{u,v}\},~\mathcal M_{n_0,n}^\to(\{A_{v,u},d^\gets_v,\gamma^\gets_{u,v}\})=\{A_{v,u}\}
    \end{aligned}
\end{equation*}
which gives the following bijection:
\begin{equation*}
    \begin{aligned}
        \mathcal M_{n_0,n}^S:\mathcal F_{n_0,n}^S&~\to \mathcal G_{n_0,n}^S\\
        \{A_{u,v},d^\to_v,\gamma^\to_{u,v},d^\gets_v,\gamma^\gets_{u,v}\}&~\mapsto \{A_{u,v}\}
    \end{aligned}
\end{equation*}
and completes the proof.
\end{proof}

Therefore, to encode $\mathcal G_{n_0,n}^S$, i.e., strongly connected graphs, one needs to repeat Eq.~\eqref{eq:simplified_encoding} twice over graph $G$ and its transpose $G^T$ respectively, as well as the following constraint to link $G$ and $G^T$:
\begin{equation}\label{eq:transpose_graph}
    \begin{aligned}
        A^T_{u,v}=A_{v,u},~\forall u,v\in [n]
    \end{aligned}
\end{equation}
where $A^T$ is the adjacency matrix of $G^T$.

\begin{remark}
    When the graphs are undirected, there is no need to repeat Eq.~\eqref{eq:simplified_encoding} twice over graphs and their transposes. Instead, adding symmetric constraints $A_{u,v}=A_{v,u}$ into Eq.~\eqref{eq:simplified_encoding} is enough since it results in $d_v^\to=d_v^\gets,~\gamma_{u,v}^\to=\gamma_{u,v}^\gets$ in Lemma \ref{lem:bijection_simplified}. 
\end{remark}

\begin{remark}
    Applying Eq.~\eqref{eq:simplified_encoding} on underlying graph space is an alternative to achieve weak connectivity, which is simpler comparing to Eq.~\eqref{eq:general_encoding} w.r.t. the number of variables and constraints.
\end{remark}

\section{Symmetry Breaking}\label{sec:symmetry_breaking}
We assumed that all nodes are labeled in Sections \ref{sec:general_encoding} and \ref{sec:simplified_encoding}, which differentiates isomorphic graphs in the search space. In certain scenarios, however, isomorphic graphs correspond to the same solution, causing the so-called symmetry issue. 

% \begin{definition}[Graph isomorphism]
%     For two graphs $G_1=(V_1,E_1)$ and $G_2=(V_2,E_2)$, if there exists a bijection $\sigma:V_1\to V_2$ such that $(u,v)\in E_1$ if and only if $(\sigma(u),\sigma(v))\in E_2$, then $G_1$ and $G_2$ are isomorphic to each other.
% \end{definition}

Observe that the symmetries result from different indexing of nodes for the same graph. In general, there exist $n!$ ways to index $n$ nodes, each of which corresponds to one solution in our graph encoding. Symmetry significantly enlarges the search space and hinders efficient optimization. We aim to introduce symmetry-breaking techniques to remove symmetries and resolve symmetry issue. Since only graphs with the same node numbers could be isomorphic, we assume that all graphs have $n$ nodes in our analysis, but constraints introduced in this section work for graphs with node number ranging from $n_0$ to $n$ as before.

\subsection{Undirected Graphs}\label{subsec:symmetry_undirected}
We start from connected undirected graphs. For unconnected graphs, one can apply our symmetry-breaking techniques to each connected component. \citet{mcdonald2024mixed} proposed a set of constraints to force molecules to be connected, which we extended to general undirected graphs \citep{zhang2023optimizing}. 
\begin{equation*}
    \begin{aligned}
        \forall v\in [n]\backslash\{0\},~\exists u<v,~s.t.~A_{u,v}=1
    \end{aligned}
\end{equation*}
The linear form of these connectivity constraints is:
\begin{equation}\label{eq:connectivity}
    \begin{aligned}
        \sum\limits_{u<v}A_{u,v}>1,~\forall v\in [n]\backslash\{0\}
    \end{aligned}
\end{equation}
Eq.~\eqref{eq:connectivity} requires each node (except for node $0$) to be linked with at least one node with smaller index, which obviously results in connected graphs. However, the reason for introducing Eq.~\eqref{eq:connectivity} here is not for connectivity but for removing symmetries, since Eq.~\eqref{eq:strong_connectivity} already guarantees connectivity without any restrictions over indexes.

Even though Eq.~\eqref{eq:connectivity} helps break symmetry, it cannot break much symmetry: any neighbor of node $0$ could be indexed $1$, then any neighbor of node $0$ or $1$ could be indexed $2$, and so on. We want to limit the number of possible indexing. A natural idea is to account for neighbors of each node: the neighbor set of a node with smaller index should also have smaller lexicographical order. Before further discussion, we need to define the lexicographical order, which is slightly different from the classic definition. 

\begin{definition}
    Define $Seq(L,M)$ as the set of all non-decreasing sequences with $L$ integer elements in $[0,M]$:
    \begin{equation}
        \begin{aligned}
            Seq(M,L)= \{(a_1,a_2,\dots,a_L)~|~ 0\le a_1 \le a_2 \le\cdots \le a_L \le M\}
        \end{aligned}
    \end{equation}
\end{definition}

\begin{definition}[Lexicographic order]
    For any $a\in Seq(L,M)$, denote its lexicographical order by $LO(a)$ such that for any $a,b\in \mathcal Seq(L,M)$, we have:
    \begin{equation}\label{eq:LO}
        \begin{aligned}
            LO(a)<LO(b)~\Leftrightarrow~\exists \;l \in \{1,2,\dots,L\},~\text{s.t.} 
            \begin{cases}
                a_i=b_i, & 1\le i<l \\
                a_l<b_l &
            \end{cases}
        \end{aligned}
    \end{equation}
\end{definition}

For any multiset $S$ with no more than $L$ integer elements in $[0,M-1]$, we first sort all elements in $S$ in a non-decreasing order, then add elements with value $M$ until the length equals to $L$. In that way, we build an injection mapping $S$ to a unique sequence $s\in Seq(L,M)$. Define the lexicographical order of $S$ by $LO(s)$. For simplicity, we still use $LO(S)$ to denote the lexicographical order of $S$. 

\begin{remark}
    Our definition of $LO(\cdot)$ only differs from the classic definition in cases where $a$ is a prefix of $b$: $LO(a)>LO(b)$, while the lexicographical order of $a$ is smaller than $b$. We define $LO(\cdot)$ in this way for the convenience of its MIP formulation, e.g., Eq.~\eqref{eq:break_symmetry_undirected}.
\end{remark}

Since we only need lexicographical orders for all neighbor sets of the graph, let $L=n-1, M=n$. With the definition of $LO(\cdot)$, we can represent the aforementioned idea by:
\begin{equation}\label{eq:LO_undirected}
    \begin{aligned}
        LO(\mathcal N(v)\backslash\{v+1\})\le LO(\mathcal N(v+1)\backslash \{v\}),~\forall v\in [n-1]
    \end{aligned}
\end{equation}
where $\mathcal N(v)$ is the neighbor set of node $v$. 

Note that the possible edge between nodes $v$ and $v+1$ is ignored in Eq.~\eqref{eq:LO_undirected} to include the cases that they are linked and share all neighbors with indexes less than $v$. In this case, $LO(\mathcal N(v+1))$ is necessarily smaller than $LO(\mathcal N(v))$ since $v\in \mathcal N(v+1)$. 

Eq.~\eqref{eq:LO_undirected} can be equivalently rewritten as:
\begin{equation}\label{eq:break_symmetry_undirected}
    \begin{aligned}
        \sum\limits_{u\neq v,v+1}2^{n-u-1}\cdot A_{u,v} \ge \sum\limits_{u\neq v,v+1}2^{n-u-1}\cdot A_{u,v+1},~\forall v\in [n-1]
    \end{aligned}
\end{equation}
The coefficients $\{2^{n-u-1}\}_{u\in [n]}$ are used to build a bijection between all possible neighbor sets and all integers in $[0,2^n-1]$. These coefficients are also called ``universal ordering vector" in \citep{friedman2007fundamental}.

Eq.~\eqref{eq:LO_undirected} excludes many ways to index nodes, for example it reduces the possible ways to index the graph in Appendix \ref{app:example_undirected} from 720 to 4. But, we still need to ensure that there exists at least one feasible indexing for any graph after applying Eq.~\eqref{eq:LO_undirected}, as shown in the following theorem:

\begin{theorem}\label{thm:feasibility_undirected}
    Given any undirected graph $G=(V,E)$, the indexing yielded from Algorithm \ref{alg:indexing_undirected} satisfies Eq.~\eqref{eq:LO_undirected}.
\end{theorem}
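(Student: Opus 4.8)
I recall that Algorithm~\ref{alg:indexing_undirected} builds the indexing by a BFS-style sweep: label $0$ is the designated start node, and having committed labels $\{0,\dots,k-1\}$, label $k$ is given to an as-yet-unlabelled node that is adjacent to $\{0,\dots,k-1\}$ and whose set of already-labelled neighbours is lexicographically smallest, ties being resolved by the rule built into the algorithm. I will check Eq.~\eqref{eq:LO_undirected} one consecutive pair at a time. It is cleanest to work with the equivalent integer form Eq.~\eqref{eq:break_symmetry_undirected}: for $v\in[n-1]$ write $\iota_v=\sum_{u\neq v,v+1}2^{n-u-1}A_{u,v}$, so that, since $LO(\cdot)$ reverses the order induced by this binary encoding, the target is $\iota_v\ge\iota_{v+1}$ for every $v$. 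Decompose $\iota_v=\iota_v^{<}+\iota_v^{>}$ into its backward part over positions $\{0,\dots,v-1\}$ and its forward part over $\{v+2,\dots,n-1\}$, and likewise $\iota_{v+1}=\iota_{v+1}^{<}+\iota_{v+1}^{>}$ (the coordinate $v$ being excluded from both by definition).

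The first ingredient is numerical: every backward weight $2^{n-u-1}$ with $u\le v-1$ is a multiple of $2^{n-v}$, while the entire forward range sums to $\sum_{u=v+2}^{n-1}2^{n-u-1}=2^{n-v-2}-1<2^{n-v}$. Hence if $\iota_v^{<}>\iota_{v+1}^{<}$ then in fact $\iota_v^{<}\ge\iota_{v+1}^{<}+2^{n-v}>\iota_{v+1}^{<}+\iota_{v+1}^{>}=\iota_{v+1}$, so $\iota_v\ge\iota_v^{<}>\iota_{v+1}$ irrespective of the forward parts. The inequality $\iota_v^{<}\ge\iota_{v+1}^{<}$ itself is immediate from the construction: if node $v+1$ has a neighbour with label $<v$, then it was an eligible candidate when label $v$ was assigned, and the greedy step picked the node $v$ whose backward neighbourhood is lexicographically smallest, i.e.\ whose backward encoding is largest; if node $v+1$ has no neighbour with label $<v$ (its only smaller-labelled neighbour being $v$ itself, which Eq.~\eqref{eq:connectivity} permits) then $\iota_{v+1}^{<}=0$; and for $v=0$ both backward parts vanish. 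After this reduction the only remaining situation is $\iota_v^{<}=\iota_{v+1}^{<}$, that is $\mathcal N(v)\cap[v]=\mathcal N(v+1)\cap[v]$ (empty when $v=0$), and in that case one must show $\iota_v^{>}\ge\iota_{v+1}^{>}$.

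This last case is the crux and is where the tie-breaking rule of Algorithm~\ref{alg:indexing_undirected} genuinely matters: nodes $v$ and $v+1$ share every smaller-labelled neighbour, and we must show the forward neighbourhood of $v$ lexicographically dominates that of $v+1$. My plan is an induction on the running label $k\ge v+2$, showing that the partial encodings over $\{v+2,\dots,k\}$ already satisfy the desired inequality; the inductive step uses that, among ``twin-like'' candidates, the algorithm breaks the tie in favour of adjacency to the lower-labelled node $v$ (whose weight $2^{n-v-1}$ strictly exceeds that of $v+1$), so that a neighbour of $v$ is never scheduled later than the matching neighbour of $v+1$, and any occurrence of a forward neighbour of $v+1$ that is not a neighbour of $v$ is preceded by a discrepancy in $v$'s favour. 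Formalising this ``twins stay ordered'' claim --- in particular handling chains of mutually tied labels $v,v+1,v+2,\dots$ and the effect of the edge between $v$ and $v+1$ itself --- is the step I expect to cost the most, and it is the one most sensitive to the precise statement of Algorithm~\ref{alg:indexing_undirected}. An alternative route for this case is a minimality/exchange argument: were some consecutive pair to violate Eq.~\eqref{eq:LO_undirected}, swapping the two labels and re-running from that point would yield an execution producing a lexicographically smaller sequence of neighbourhood encodings, contradicting the optimality the greedy rule maintains step by step. Once the tie case is settled, combining it with the weight argument of the second paragraph gives $\iota_v\ge\iota_{v+1}$ for every $v\in[n-1]$, which is exactly Eq.~\eqref{eq:LO_undirected}.
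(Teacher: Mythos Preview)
Your proposal rests on a description of Algorithm~\ref{alg:indexing_undirected} that is not accurate, and the inaccuracy lands exactly where you say the difficulty lies. You describe the procedure as a BFS-style greedy that picks, at step $s$, the unindexed node whose set of \emph{already-indexed} neighbours has smallest $LO$. In fact the algorithm does more: it first ranks all unindexed nodes by $LO(\mathcal N^s(\cdot))$, assigns each a \emph{temporary} index $rank^s(\cdot)+s$, forms the \emph{full} neighbour multiset $\mathcal N_t^s(\cdot)$ using those temporary indices, and only then chooses the minimiser. The paper's running example (Appendix~\ref{app:example_undirected}, the step $s=2$) is designed precisely to show that the naive backward-only greedy can produce an indexing violating Eq.~\eqref{eq:LO_undirected}; the temporary indices are what prevent this. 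Your backward inequality $\iota_v^{<}\ge\iota_{v+1}^{<}$ does hold, but it is not ``immediate'': it follows because $\mathcal N^s(\cdot)=\mathcal N_t^s(\cdot)\cap[s]$ together with the restriction lemma (Proposition~\ref{prop:undirected_3}).

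The gap is your tie case $\iota_v^{<}=\iota_{v+1}^{<}$. Your plan (``twins stay ordered'' by induction, or an exchange argument) is vague, and because you have not engaged with the temporary-index mechanism you have no handle on how the algorithm actually breaks such ties. The paper's argument is different in structure: it is a single proof by contradiction on the first violating pair $(s,s+1)$, and the engine is Lemma~\ref{lem:undirected_1}, which says that the \emph{final} order $\mathcal I(u)<\mathcal I(v)$ forces the \emph{temporary} order $\mathcal I^s(u)\le\mathcal I^s(v)$ at every step~$s$. This rank-stability lemma is exactly what the temporary-index construction buys, and it is what lets one compare the $k$-th neighbours $u_k,v_k$ at steps $s$ and $s+1$ and derive a contradiction. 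Your decomposition into backward and forward encodings is a clean way to organise the easy half, but to finish you will need either Lemma~\ref{lem:undirected_1} or an equivalent statement about how temporary ranks evolve; neither your induction sketch nor the exchange idea supplies this.
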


\begin{algorithm}[t]
    \caption{Indexing algorithm for undirected graphs}\label{alg:indexing_undirected}
    \begin{algorithmic}
    \State \textbf{Input:} $G=(V,E)$ with node set $V=\{v_0,v_1,\dots,v_{n-1}\}$. $\mathcal N(v)$ is the neighbor set of node $v\in V$.
    \State $s\gets 0$ \Comment{index starts from 0}
    \State $V_1^1\gets\emptyset$ \Comment{initialize set of indexed nodes}
    \While{$s<n$}
    \State $V_2^s\gets V\backslash V_1^s$ \Comment{set of unindexed nodes}   
    \State $\mathcal N^{s}(v)\gets \{\mathcal I(u)~|~u\in \mathcal N(v)\cap V_1^{s}\},~\forall v\in V_2^{s}$ \Comment{obtain all indexed neighbors}
    \State $rank^s(v)\gets\left|\{LO(\mathcal N^s(u))<LO(\mathcal N^s(v))~|~\forall u\in V_2^s\}\right|,~\forall v\in V_2^s$ 
    \State \Comment{assign a rank to each unindexed node}
    \State $\mathcal I^{s}(v)\gets 
            \begin{cases}
                \mathcal I(v),&\forall v\in V_1^{s}\\
                rank^s(v)+s,&\forall v\in V_2^{s}
            \end{cases}$ \Comment{assign temporary indexes}
    \State $\mathcal N^{s}_{t}(v)\gets \{\mathcal I^{s}(u)~|~u\in\mathcal N(v)\},~\forall v\in V_2^{s}$ \Comment{define temporary neighbor sets based on $\mathcal I^{s}$}
    \State $v^{s}\gets \arg\min\limits_{v\in V_2^{s}} LO(\mathcal N^{s}_{t}(v))$ \Comment{neighbor set of $v^{s}$ has minimal order}
    \State \Comment{if multiple nodes share the same minimal order, arbitrarily choose one}
    \State $\mathcal I(v^s)=s$ \Comment{index $s$ to node $v^s$}  
    \State $V_1^{s+1}\gets V_1^{s}\cup \{v^{s}\}$ \Comment{add $v^{s}$ to set of indexed nodes}   
    \State $s\gets s+1$ \Comment{next index is $s+1$}
    \EndWhile
    \State \textbf{Output:} $\{\mathcal I(v)\}_{v\in V}$ \Comment{result indexing}
    \end{algorithmic}
\end{algorithm}

Theorem \ref{thm:feasibility_undirected} relies on a graph indexing algorithm, i.e., Algorithm \ref{alg:indexing_undirected}, to constructs a feasible indexing. Before proving the theorem, we first give some propositions and then use them to derive an important lemma. 

\begin{proposition}\label{prop:undirected_1}
    For any $s\in [n]$, $\mathcal I^s(v)
    \begin{cases}
        <s,&\forall v\in V_1^s\\
        \ge s,&\forall v\in V_2^s
    \end{cases}$.
\end{proposition}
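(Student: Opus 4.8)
The plan is to unwind the definitions in Algorithm~\ref{alg:indexing_undirected} after first recording the elementary structural fact that the set of indexed nodes grows by exactly one element per iteration. Concretely, I would first prove by induction on $s\in[n]$ that $V_1^s=\{v^0,v^1,\dots,v^{s-1}\}$, where $v^{s'}$ denotes the node chosen at iteration $s'$; in particular $|V_1^s|=s$ and hence $|V_2^s|=n-s$. The base case $V_1^0=\emptyset$ is the initialization, and the inductive step is immediate from the update $V_1^{s+1}\gets V_1^s\cup\{v^s\}$ together with the fact that $v^s\in V_2^s=V\backslash V_1^s$, so that $v^s$ is genuinely new.

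The second case of the claim is then immediate. For $v\in V_2^s$, the temporary-index assignment in the algorithm gives $\mathcal I^s(v)=rank^s(v)+s$, and $rank^s(v)=\bigl|\{\,u\in V_2^s : LO(\mathcal N^s(u))<LO(\mathcal N^s(v))\,\}\bigr|\ge 0$ since it is a cardinality. Hence $\mathcal I^s(v)\ge s$.

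For the first case, let $v\in V_1^s$. By the structural fact above, $v=v^{s'}$ for some $s'\in\{0,1,\dots,s-1\}$. At iteration $s'$ the algorithm executes $\mathcal I(v^{s'})=s'$, and this value is never modified afterwards, because each iteration assigns the permanent index $\mathcal I(\cdot)$ only at its newly selected node, which lies outside the current $V_1$. Since the temporary-index assignment gives $\mathcal I^s(v)=\mathcal I(v)$ for every $v\in V_1^s$, we conclude $\mathcal I^s(v)=\mathcal I(v)=s'\le s-1<s$, completing the proof.

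I do not expect a substantive obstacle here: the statement is a direct bookkeeping consequence of the algorithm's update rules. The only point that deserves an explicit sentence is the persistence of the permanent indices $\mathcal I(\cdot)$ across iterations, i.e., that the value $\mathcal I(v^{s'})=s'$ assigned at iteration $s'$ is never overwritten at a later iteration; this is precisely what allows evaluating $\mathcal I^s$ on $V_1^s$ in closed form in the first case.
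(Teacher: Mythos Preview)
Your proof is correct and follows essentially the same approach as the paper's: both cases rest on the definition $\mathcal I^s(v)=rank^s(v)+s\ge s$ for $v\in V_2^s$ and on the observation that nodes in $V_1^s$ have already received permanent indices $0,1,\dots,s-1$, so $\mathcal I^s(v)=\mathcal I(v)<s$. Your version simply makes explicit (via the induction on $s$ establishing $V_1^s=\{v^0,\dots,v^{s-1}\}$ and the persistence of $\mathcal I(\cdot)$) what the paper compresses into the single sentence ``nodes in $V_1^{s}$ have been indexed by $0,1,\dots,s-1$.''
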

\begin{proof}
    At the $s$-th iteration, nodes in $V_1^{s}$ have been indexed by $0,1,\dots,s-1$. Therefore, if $v\in V_1^{s}$, then $\mathcal I^{s}(v)=\mathcal I(v)<s$. If $v\in V_2^{s}$, since $\mathcal I^{s}(v)$ is the sum of $s$ and $rank^s(v)$ (which is non-negative), then we have $\mathcal I^{s}(v)\ge s$.
\end{proof}

\begin{proposition}\label{prop:undirected_2}
    For any $s_1,s_2\in [n]$, $s_1\le s_2 \Rightarrow \mathcal N^{s_1}(v)=\mathcal N^{s_2}(v)\cap [s_1],~\forall v\in V_2^{s_2}$
\end{proposition}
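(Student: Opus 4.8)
The plan is to unfold the definitions in Algorithm~\ref{alg:indexing_undirected} and exploit the single structural fact that, once a node receives an index, that index is never altered in later iterations. First I would establish two preliminary observations. (i) \emph{Monotonicity of the indexed set}: from $V_1^{s+1} = V_1^s \cup \{v^s\}$ it follows by induction that $V_1^{s_1} \subseteq V_1^{s_2}$ whenever $s_1 \le s_2$, and therefore $V_2^{s_2} = V\setminus V_1^{s_2} \subseteq V\setminus V_1^{s_1} = V_2^{s_1}$; in particular, for $v \in V_2^{s_2}$ the quantity $\mathcal N^{s_1}(v)$ is well defined. (ii) \emph{Characterisation of $V_1^s$}: since iteration $t$ assigns exactly the index $t$ (to the node $v^t$, via $\mathcal I(v^t)=t$) and never reassigns, $V_1^s$ is precisely the set of nodes whose final index lies in $[s] = \{0,1,\dots,s-1\}$; equivalently, using Proposition~\ref{prop:undirected_1}, $u \in V_1^s \iff \mathcal I^s(u) < s$.

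Next I would evaluate the two sides of the claimed identity for a fixed $v \in V_2^{s_2}$. By definition $\mathcal N^{s_2}(v) = \{\mathcal I(u) \mid u \in \mathcal N(v)\cap V_1^{s_2}\}$, so intersecting with $[s_1]$ gives
\[
\mathcal N^{s_2}(v)\cap[s_1] \;=\; \bigl\{\mathcal I(u) \;\big|\; u \in \mathcal N(v)\cap V_1^{s_2},\ \mathcal I(u) < s_1\bigr\}.
\]
I would then argue that the condition ``$u\in V_1^{s_2}$ and $\mathcal I(u)<s_1$'' is equivalent to ``$u\in V_1^{s_1}$'': if $\mathcal I(u) < s_1 \le s_2$, then $u$ was indexed at some iteration before $s_1$, hence $u \in V_1^{s_1}$ by observation (ii); conversely, any $u \in V_1^{s_1}$ satisfies $u \in V_1^{s_2}$ by (i) and carries an index in $[s_1]$, again by (ii). Substituting this equivalence into the displayed set yields $\{\mathcal I(u)\mid u\in\mathcal N(v)\cap V_1^{s_1}\} = \mathcal N^{s_1}(v)$, which is exactly the desired equality.

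The whole argument is essentially bookkeeping, so I do not expect a real obstacle; the only points that need a little care are the well-definedness of $\mathcal N^{s_1}(v)$ for $v\in V_2^{s_2}$ (handled by the set-monotonicity in (i)) and the fact that the partial indexing $\mathcal I$ is consistent across iterations, which is what lets one identify $V_1^s$ with the set of nodes whose final index is below $s$ and thereby pass freely between ``$u$ was already indexed by iteration $s$'' and ``$\mathcal I(u) < s$''. With those in place the proof reduces to the one-line set manipulation above.
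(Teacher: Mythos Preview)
Your proposal is correct and follows essentially the same approach as the paper: both arguments rest on the monotonicity $V_1^{s_1}\subseteq V_1^{s_2}$ (hence well-definedness of $\mathcal N^{s_1}(v)$ on $V_2^{s_2}$) and the characterisation $u\in V_1^{s}\iff \mathcal I(u)<s$ from Proposition~\ref{prop:undirected_1}, then reduce the identity to the single set-equivalence ``$u\in V_1^{s_2}$ and $\mathcal I(u)<s_1$'' $\Leftrightarrow$ ``$u\in V_1^{s_1}$''. The only cosmetic difference is the direction of the chain---the paper starts from $\mathcal N^{s_1}(v)$ and rewrites toward $\mathcal N^{s_2}(v)\cap[s_1]$, whereas you start from the latter---but the content is identical.
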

\begin{proof}
    $\mathcal N^{s_1}(\cdot)$ is well-defined on $V_2^{s_2}$ since $V_2^{s_2}\subset V_2^{s_1}$. By definitions of $\mathcal N^{s}(\cdot)$ and $V_1^{s}$, for any $v\in V_2^{s_2}$ we have:
    \begin{equation*}
        \begin{aligned}
            \mathcal N^{s_1}(v)=&\{\mathcal I(u)~|~u\in \mathcal N(v)\cap V_1^{s_1}\}&&\longleftarrow\text{$V_1^{s_1}\subset V_1^{s_2}$}\\
            =& \{\mathcal I(u)~|~u\in \mathcal N(v)\cap V_1^{s_1}\cap V_1^{s_2}\}&&\longleftarrow\text{rewrite}\\
            =& \{\mathcal I(u)~|~u\in (\mathcal N(v)\cap V_1^{s_2})\cap V_1^{s_1}\}&&\longleftarrow\text{Proposition \ref{prop:undirected_1}}\\
            =& \{\mathcal I(u)~|~u\in \mathcal N(v)\cap V_1^{s_2}, \mathcal I(u)<s_1\}&&\longleftarrow\text{rewrite}\\
            =& \{\mathcal I(u)~|~u\in \mathcal N(v)\cap V_1^{s_2}\}\cap [s_1]&&\longleftarrow\text{definition of $\mathcal N^{s_2}(v)$}\\
            =&\mathcal N^{s_2}(v)\cap [s_1]
        \end{aligned}
    \end{equation*}
    which completes the proof.
\end{proof}

\begin{proposition}\label{prop:undirected_3}
    Given any multisets $S,T$ with no more than $L$ integer elements in $[0,M-1]$, we have:
    \begin{equation*}
        \begin{aligned}
            LO(S)\le LO(T) \Rightarrow LO(S\cap [m])\le LO(T\cap [m]),~\forall m =1,2,\dots, M
        \end{aligned}
    \end{equation*}
\end{proposition}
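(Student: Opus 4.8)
The plan is to convert the statement into an elementary fact about sorted integer sequences and then verify it by examining the first coordinate at which two such sequences differ. For a multiset $S$ with at most $L$ elements in $[0,M-1]$, write $\sigma(S)=(a_1,\dots,a_L)$ for its canonical sequence, i.e.\ the elements of $S$ listed non-decreasingly and then padded with copies of $M$ up to length $L$, and write $\sigma(T)=(b_1,\dots,b_L)$ likewise. By the very definition of $LO(\cdot)$ on multisets, $LO(S)\le LO(T)$ is the same as $\sigma(S)\le_{\mathrm{lex}}\sigma(T)$ in the sense of Eq.~\eqref{eq:LO}. Because $\sigma(S)$ and $\sigma(T)$ share the common length $L$, neither is a proper prefix of the other, so the prefix exception discussed after Eq.~\eqref{eq:LO} never applies and $\le_{\mathrm{lex}}$ here is ordinary lexicographic order: $\sigma(S)\le_{\mathrm{lex}}\sigma(T)$ means that $\sigma(S)=\sigma(T)$, or there is a first index $l$ with $a_i=b_i$ for all $i<l$ and $a_l<b_l$.

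The key observation is how $S\cap[m]$ looks at the level of canonical sequences. Since $m\le M$ and every padding symbol is exactly $M$, the entries of $\sigma(S)$ that are $<m$ are precisely the genuine (non-padding) elements of $S$ that are $<m$, and since $\sigma(S)$ is sorted they form an initial block $a_1\le\cdots\le a_k<m\le a_{k+1}\le\cdots\le a_L$. Consequently $\sigma(S\cap[m])=(a_1,\dots,a_k,M,\dots,M)=\phi_m(\sigma(S))$, where $\phi_m$ denotes the coordinatewise map that keeps entries $<m$ unchanged and replaces every entry $\ge m$ by $M$. Two facts about $\phi_m$ will be used: it sends sorted sequences to sorted sequences (so that $LO(\phi_m(\cdot))$ is well defined, matching $S\cap[m]\subseteq S$ having at most $L$ elements in $[0,M-1]$), and $\phi_m(c)_i$ depends only on $c_i$, hence $a_i=b_i$ implies $\phi_m(a)_i=\phi_m(b)_i$. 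The claim thus reduces to: for sorted length-$L$ sequences $a,b$ over $\{0,\dots,M\}$, $a\le_{\mathrm{lex}}b$ implies $\phi_m(a)\le_{\mathrm{lex}}\phi_m(b)$.

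I would prove this reduced claim by cases on the first differing index. If $a=b$ the conclusion is trivial. Otherwise let $l$ be the first index with $a_l<b_l$ (and $a_i=b_i$ for $i<l$), so $\phi_m(a)_i=\phi_m(b)_i$ for $i<l$. If $a_l<m$, then $\phi_m(a)_l=a_l$, while $\phi_m(b)_l$ equals $b_l$ when $b_l<m$ and $M$ when $b_l\ge m$; in either case $\phi_m(b)_l>a_l=\phi_m(a)_l$, so $\phi_m(a)<_{\mathrm{lex}}\phi_m(b)$. If instead $a_l\ge m$, then $b_l>a_l\ge m$, and since $a$ and $b$ are sorted we get $a_i\ge m$ and $b_i\ge m$ for every $i\ge l$, whence $\phi_m(a)_i=M=\phi_m(b)_i$ for all $i\ge l$; combined with agreement on the prefix this gives $\phi_m(a)=\phi_m(b)$. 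In all cases $\phi_m(a)\le_{\mathrm{lex}}\phi_m(b)$, i.e.\ $LO(S\cap[m])\le LO(T\cap[m])$, and this holds for every $m=1,\dots,M$ (the case $m=M$ being vacuous since $S\cap[M]=S$).

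I expect the only real subtlety to be the last case above: one must use that the canonical sequences are \emph{sorted}, not merely that they are sequences of the same length. Sortedness is exactly what guarantees that a lexicographic discrepancy sitting ``above the cut at $m$'' vanishes after truncation — the two truncated sequences become equal rather than reversing their order — which is precisely what prevents the implication $LO(S)\le LO(T)\Rightarrow LO(S\cap[m])\le LO(T\cap[m])$ from failing. The rest is routine bookkeeping about the padding convention and the definition of $LO$.
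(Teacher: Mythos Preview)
Your proof is correct and follows essentially the same approach as the paper's: both pass to the canonical padded sequences, locate the first differing index $l$, and split into the two cases $a_l<m$ (paper's $m>s_l$) and $a_l\ge m$ (paper's $m\le s_l$), reaching strict inequality in the first case and equality of the truncated sequences in the second. Your introduction of the coordinatewise map $\phi_m$ and your explicit remark that sortedness is what makes the $a_l\ge m$ case collapse to equality add clarity, but the underlying argument is the same.
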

\begin{proof}
    By the definition of lexicographical order for multisets, denote the corresponding sequence to $S,T,S\cap [m],T\cap [m]$ by $s,t,s^m,t^m \in Seq(L,M)$, respectively. Then it is equivalent to show that:
    \begin{equation*}
        \begin{aligned}
            LO(s)\le LO(t) \Rightarrow LO(s^m)\le LO(t^m),~\forall m=1,2,\dots,M
        \end{aligned}
    \end{equation*}
    Let $s=(s_1,s_2,\dots,s_L)$ and $t=(t_1,t_2,\dots,t_L)$. If $LO(s)=LO(t)$, then $s=t$ and $s^m=t^m$. Thus $LO(s^m)=LO(t^m)$. Otherwise, if $LO(s)<LO(t)$, then there exists $1\le l\le L$ such that:
    \begin{equation*}
        \begin{aligned}
            \begin{cases}
            s_i=t_i, & \forall 1\le i<l \\
            s_l<t_l &
            \end{cases}
        \end{aligned}
    \end{equation*}
    If $m\le s_l$, then $s^m=t^m$, which means $LO(s^m)=LO(t^m)$. Otherwise, if $m>s_l$, then $s^m$ and $t^m$ share the same first $l-1$ elements. But the $l$-th element of $s^m$ is $s_l$, while the $l$-th element of $t_m$ is either $t_l$ or $M$. In both cases, we have $LO(s^m)<LO(t^m)$.
\end{proof}

Using Propositions \ref{prop:undirected_1}--\ref{prop:undirected_3}, we can prove Lemma \ref{lem:undirected_1}, which shows that if the final index of node $u$ is smaller than $v$, then at each iteration, the temporary index assigned to $u$ is not greater than $v$.

\begin{lemma}\label{lem:undirected_1}
    For any two nodes $u$ and $v$, 
    \begin{equation*}
        \begin{aligned}
            \mathcal I(u)<\mathcal I(v)
            \Rightarrow
            \mathcal I^s(u)\le \mathcal I^s(v),~\forall s\in [n]
        \end{aligned}
    \end{equation*}
\end{lemma}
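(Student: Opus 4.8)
\textbf{Proof plan for Lemma \ref{lem:undirected_1}.}

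The plan is to argue by (strong) induction on the iteration counter $s$, tracking the temporary indices $\mathcal I^s(u)$ and $\mathcal I^s(v)$ for a fixed pair $u,v$ with $\mathcal I(u)<\mathcal I(v)$. There are essentially three regimes to handle, according to which of $u,v$ have already received their final index by iteration $s$. If both $u,v\in V_1^s$, then $\mathcal I^s(u)=\mathcal I(u)<\mathcal I(v)=\mathcal I^s(v)$ and we are done. If both $u,v\in V_2^s$, then $\mathcal I^s(u)=rank^s(u)+s$ and $\mathcal I^s(v)=rank^s(v)+s$, so the claim reduces to $rank^s(u)\le rank^s(v)$, i.e. to $LO(\mathcal N^s(u))\le LO(\mathcal N^s(v))$. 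The remaining mixed case, $u\in V_1^s$ and $v\in V_2^s$ (the case $v\in V_1^s, u\in V_2^s$ being impossible for $s$ large enough by monotonicity of the indexing order — this needs a brief argument, see below), uses Proposition \ref{prop:undirected_1}: $\mathcal I^s(u)<s\le \mathcal I^s(v)$, which immediately gives the inequality.

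The crux is therefore the pure-$V_2^s$ case: showing $LO(\mathcal N^s(u))\le LO(\mathcal N^s(v))$ whenever $\mathcal I(u)<\mathcal I(v)$ and both nodes are still unindexed at step $s$. I would prove this by a downward/forward comparison with the situation at the iteration $s^\ast$ at which $u$ gets indexed (so $s\le s^\ast$, and at step $s^\ast$ node $u$ is the argmin of $LO(\mathcal N^{s^\ast}_t(\cdot))$ over $V_2^{s^\ast}$). Since $\mathcal I(u)<\mathcal I(v)$, node $v$ is still in $V_2^{s^\ast}$, so $LO(\mathcal N^{s^\ast}_t(u))\le LO(\mathcal N^{s^\ast}_t(v))$; restricting the temporary neighbor sets to indices $<s^\ast$ (and noting $\mathcal N^{s^\ast}(w)=\mathcal N^{s^\ast}_t(w)\cap [s^\ast]$ for $w\in V_2^{s^\ast}$, by Proposition \ref{prop:undirected_1}) together with Proposition \ref{prop:undirected_3} yields $LO(\mathcal N^{s^\ast}(u))\le LO(\mathcal N^{s^\ast}(v))$. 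Finally Proposition \ref{prop:undirected_2} says $\mathcal N^s(w)=\mathcal N^{s^\ast}(w)\cap [s]$ for $w\in V_2^{s^\ast}\subseteq V_2^s$, so one more application of Proposition \ref{prop:undirected_3} propagates the order inequality back from step $s^\ast$ to step $s$, giving $LO(\mathcal N^s(u))\le LO(\mathcal N^s(v))$ as required.

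The main obstacle I anticipate is bookkeeping the mixed case correctly: specifically, ruling out (or correctly handling) the configuration $v\in V_1^s$ while $u\in V_2^s$ under the hypothesis $\mathcal I(u)<\mathcal I(v)$. This requires establishing that the algorithm indexes nodes in an order consistent with the final labels it assigns — i.e. if $v$ is indexed at step $s_v$ then $\mathcal I(v)=s_v$, so $\mathcal I(u)<\mathcal I(v)$ forces $u$ to have been indexed at some step $s_u=\mathcal I(u)<s_v$, hence $u\in V_1^{s}$ whenever $v\in V_1^s$. This is essentially immediate from the line $\mathcal I(v^s)=s$ in the algorithm, but it should be stated explicitly. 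A secondary subtlety is making sure the induction hypothesis is invoked at the right granularity: the argument for step $s$ in the pure-$V_2^s$ case does not actually need the inductive hypothesis on $s$ at all — it only uses the structural Propositions \ref{prop:undirected_1}--\ref{prop:undirected_3} and the argmin property at step $s^\ast$ — so the "induction on $s$" framing can likely be dispensed with, and the lemma proved directly by the three-case split above. I would present it that way to keep the argument clean.
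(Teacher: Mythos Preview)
Your proposal is correct and essentially identical to the paper's proof: the paper also dispenses with induction and proceeds by exactly your three-case split (both indexed, mixed, both unindexed), handling the key unindexed case via the argmin property at step $s^\ast=\mathcal I(u)$ followed by two applications of Proposition~\ref{prop:undirected_3} sandwiching Proposition~\ref{prop:undirected_2}. Your observation that the ``impossible'' mixed configuration $v\in V_1^s,\,u\in V_2^s$ follows from $\mathcal I(v^s)=s$ is exactly what the paper uses implicitly when it parametrizes the cases by comparing $s$ to $\mathcal I(u)$ and $\mathcal I(v)$.
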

\begin{proof} Consider three cases as follows:

    \textbf{Case I: } If $\mathcal I(u)<\mathcal I(v)<s$, then:
    \begin{equation*}
        \begin{aligned}
            \mathcal I^s(u)=\mathcal I(u)<\mathcal I(v)=\mathcal I^s(v)
        \end{aligned}
    \end{equation*}
    
    \textbf{Case II: } If $\mathcal I(u)<s\le \mathcal I(v)$, then $\mathcal I^s(u)=\mathcal I(u)$ and:
    \begin{equation*}
        \begin{aligned}
            \mathcal I^s(v)\ge s>\mathcal I(u)=\mathcal I^s(u)
        \end{aligned}
    \end{equation*}
    where Proposition \ref{prop:undirected_1} is used.
    
    \textbf{Case III: } If $s\le \mathcal I(u)<\mathcal I(v)$, at $\mathcal I(u)$-th iteration, $u$ is chosen to be indexed $\mathcal I(u)$. Thus:
    \begin{equation*}
        \begin{aligned}
            LO(\mathcal N^{\mathcal I(u)}_{t}(u))\le LO(\mathcal N^{\mathcal I(u)}_{t}(v))
        \end{aligned}
    \end{equation*}
    According to the definition of $\mathcal N^{\mathcal I(u)}(\cdot)$ and $\mathcal N^{\mathcal I(u)}_t(\cdot)$, we have:
    \begin{equation*}
        \begin{aligned}
            \mathcal N^{\mathcal I(u)}(u)=\mathcal N^{\mathcal I(u)}_t(u)\cap [\mathcal I(u)],
            ~\mathcal N^{\mathcal I(u)}(v)=\mathcal N^{\mathcal I(u)}_t(v)\cap [\mathcal I(u)]
        \end{aligned}
    \end{equation*}
    Using Proposition \ref{prop:undirected_3} (with $S=\mathcal N^{\mathcal I(u)}_t(u)$,~$T=\mathcal N^{\mathcal I(u)}_t(v)$,~$m=\mathcal I(u)$) yields:
    \begin{equation*}
        \begin{aligned}
            LO(\mathcal N^{\mathcal I(u)}(u))\le LO(\mathcal N^{\mathcal I(u)}(v))
        \end{aligned}
    \end{equation*}
    Apply Proposition \ref{prop:undirected_2} (with $s_1=s,s_2=\mathcal I(u)$) for $u$ and $v$, we have:
    \begin{equation*}
        \begin{aligned}
            \mathcal N^s(u)=\mathcal N^{\mathcal I(u)}(u)\cap [s],
            ~\mathcal N^s(v)=\mathcal N^{\mathcal I(u)}(v)\cap [s]
        \end{aligned}
    \end{equation*}
    Using Proposition \ref{prop:undirected_3} again (with $S=N^{\mathcal I(u)}(u),~T=N^{\mathcal I(u)}(v),~m=s$) gives:
    \begin{equation*}
        \begin{aligned}
            LO(\mathcal N^s(u))\le LO(\mathcal N^s(v))
        \end{aligned}
    \end{equation*}
    Recall the definition of $\mathcal I^s(\cdot)$, we have:
    \begin{equation*}
        \begin{aligned}
            \mathcal I^s(u)\le \mathcal I^s(v)
        \end{aligned}
    \end{equation*}
    which completes the proof.
\end{proof}

Now we can prove Theorem \ref{thm:feasibility_undirected}.

\begin{proof}[Proof of Theorem \ref{thm:feasibility_undirected}]
    Assume that Eq.~\eqref{eq:LO_undirected} is not satisfied and the minimal index that violates Eq.~\eqref{eq:LO_undirected} is $s$. Denote nodes with index $s$ and $s+1$ by $u,v$ respectively, i.e., $\mathcal I(u)=s,~\mathcal I(v)=s+1$. 
    
    Let $\mathcal N(u)\backslash \{v\}:=\{u_1,u_2,\dots,u_{n_u}\}$ be all neighbors of $u$ except for $v$, where:
    \begin{equation*}
        \begin{aligned}
            \mathcal I(u_i)<\mathcal I(u_{i+1}),~\forall i=1,2,\dots,n_u-1
        \end{aligned}
    \end{equation*}
    Similarly, let $\mathcal N(v)\backslash \{u\}:=\{v_1,v_2,\dots,v_{n_v}\}$ be all neighbors of $v$ except for $u$, where:
    \begin{equation*}
        \begin{aligned}
            \mathcal I(v_j)<\mathcal I(v_{j+1}),~\forall j=1,2,\dots,n_v-1
        \end{aligned}
    \end{equation*}
    Denote the sequences in $Seq(n-1,n)$ corresponding to sets $\{\mathcal I(u_i)~|~1\le i\le n_u\}$ and $\{\mathcal I(v_j)~|~1\le j\le n_v\}$ by $a=(a_1,a_2,\dots,a_{n-1}),b=(b_1,b_2,\dots,b_{n-1})$. By definition of $LO(\cdot)$:
    \begin{equation*}
        \begin{aligned}
            a_i=
            \begin{cases}
                \mathcal I(u_i),&1\le i\le n_u\\
                n,& n_u<i<n
            \end{cases},~
            b_j=
            \begin{cases}
                \mathcal I(v_j),&1\le j\le n_v\\
                n,& n_v<j<n
            \end{cases}
        \end{aligned}
    \end{equation*}
    
    Since nodes $u$ and $v$ violate Eq.~\eqref{eq:LO_undirected}, there exists a position $1\le k\le n-1$ satisfying:
    \begin{equation*}
        \begin{aligned}
            \begin{cases}
                a_i=b_i, & \forall 1\le i<k \\
                a_k>b_k & 
            \end{cases}
        \end{aligned}
    \end{equation*}
    from where we know that nodes $u$ and $v$ share the first $k-1$ neighbors, i.e. $u_i=v_i,~\forall 1\le i<k$. From $b_k<a_k\le n$ we know that node $v$ definitely has its $k$-th neighbor node $v_k$. Also, note that $v_k$ is not a neighbor of node $u$. Otherwise, we have $u_k=v_k$ and then $a_k=\mathcal I(u_k)=\mathcal I(v_k)=b_k$.
    
    \textbf{Case I: } If $a_k=n$, that is, node $u$ has $k-1$ neighbors. 
    
    In this case, node $v$ has all neighbors of node $u$ as well as node $v_k$. Therefore, we have:
    \begin{equation*}
        \begin{aligned}
            LO(\mathcal N^s_t(u))>LO(\mathcal N^s_t(v))
        \end{aligned}
    \end{equation*}
    which violates the fact that node $u$ is chosen to be indexed $s$ at $s$-th iteration of Algorithm \ref{alg:indexing_undirected}.
    
    \textbf{Case II: } If $a_k<n$, that is, node $u$ has nodes $u_k$ as its $k$-th neighbor. 
    
    Since $\mathcal I(u_k)=a_k>b_k=\mathcal I(v_k)$, apply Lemma \ref{lem:undirected_1} on node $u_k$ and $v_k$ at ($s+1$)-th iteration and obtain that:
    \begin{equation}\label{eq:u_k>=v_k}
        \begin{aligned}
            \mathcal I^{s+1}(u_k)\ge \mathcal I^{s+1}(v_k)
        \end{aligned}
    \end{equation}
    
    Similarly, applying Lemma \ref{lem:undirected_1} to all the neighbors of node $u$ and node $v$ at $s$-th iteration gives:
    \begin{equation*}
        \begin{aligned}
            \mathcal I^s(u_i)&\le \mathcal I^s(u_{i+1}),~\forall i=1,2,\dots,n_u-1 \\
            \mathcal I^s(v_j)&\le \mathcal I^s(v_{j+1}),~\forall j=1,2,\dots,n_v-1 
        \end{aligned}
    \end{equation*}
    Given that $a_k=\mathcal I(u_k)$ is the $k$-th smallest number in $a$, we conclude that $\mathcal I^s(u_k)$ is equal to the $k$-th smallest number in $\mathcal N^s_t(u)$. Likewise, $\mathcal I^s(v_k)$ equals to the $k$-th smallest number in $\mathcal N^s_t(v)$. Meanwhile, $\mathcal I^s(u_i)=\mathcal I^s(v_i)$ since $u_i=v_i,~\forall 1\le i<k$. After comparing the lexicographical orders between of $\mathcal N^s_t(u)$ and $\mathcal N^s_t(v)$ (with the same $k-1$ smallest elements, $\mathcal I^s(u_k)$ and $\mathcal I^s(v_k)$ as the $k$-th smallest element, respectively), node $u$ is chosen. Therefore, we have:
    \begin{equation*}
        \begin{aligned}
            \mathcal I^s(u_k)\le \mathcal I^s(v_k)
        \end{aligned}
    \end{equation*}
    from which we know that:
    \begin{equation*}
        \begin{aligned}
            LO(\mathcal N^s(u_k))\le LO(\mathcal N^s(v_k))
        \end{aligned}
    \end{equation*}
    At ($s+1$)-th iteration, node $u_k$ has one more indexed neighbor, i.e., node $u$ with index $s$, while node $v_k$ has no new indexed neighbor. Thus we have:
    \begin{equation*}
        \begin{aligned}
            LO(\mathcal N^{s+1}(u_k))=LO(\mathcal N^s(u_k)\cup \{s\})< LO(\mathcal N^s(u_k))\le LO(\mathcal N^s(v_k))=LO(\mathcal N^{s+1}(v_k))
        \end{aligned}
    \end{equation*}
    which yields:
    \begin{equation}\label{eq:u_k<v_k}
        \begin{aligned}
            \mathcal I^{s+1}(u_k)< \mathcal I^{s+1}(v_k)
        \end{aligned}
    \end{equation}
    The contradiction between Eq.~\eqref{eq:u_k>=v_k} and Eq.~\eqref{eq:u_k<v_k} completes this proof.
\end{proof}

Theorem \ref{thm:feasibility_undirected} guarantees that there exists at least one indexing satisfying Eq.~\eqref{eq:LO_undirected}. We further show that Eq.~\eqref{eq:LO_undirected} implies Eq.~\eqref{eq:connectivity} on connected graphs, as shown in Lemma \ref{lem:undirected_2}. 

\begin{lemma}\label{lem:undirected_2}
    For any connected undirected graph $G=(V,E)$, if one indexing of $G$ satisfies Eq.~\eqref{eq:LO_undirected}, then it satisfies Eq.~\eqref{eq:connectivity}.
\end{lemma}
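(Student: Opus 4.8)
The plan is to argue by contradiction. Suppose an indexing of $G$ satisfies Eq.~\eqref{eq:LO_undirected} but violates Eq.~\eqref{eq:connectivity}, so there is a node $v\in[n]\backslash\{0\}$ none of whose neighbors has a smaller index; equivalently $\mathcal N(v)\cap[v]=\emptyset$. I will show that this forces the cut between $\{0,1,\dots,v-1\}$ and $\{v,v+1,\dots,n-1\}$ to be empty, contradicting the connectivity of $G$.

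The key step is to turn Eq.~\eqref{eq:LO_undirected} into a monotonicity statement about neighbor sets truncated to the initial segment $[v]$. For any index $s$ with $v\le s\le n-2$, I apply Proposition \ref{prop:undirected_3} with $S=\mathcal N(s)\backslash\{s+1\}$, $T=\mathcal N(s+1)\backslash\{s\}$ and $m=v$ to the inequality $LO(\mathcal N(s)\backslash\{s+1\})\le LO(\mathcal N(s+1)\backslash\{s\})$ supplied by Eq.~\eqref{eq:LO_undirected}. Since $s\ge v$ we have $s\notin[v]$ and $s+1\notin[v]$, so $(\mathcal N(s)\backslash\{s+1\})\cap[v]=\mathcal N(s)\cap[v]$ and $(\mathcal N(s+1)\backslash\{s\})\cap[v]=\mathcal N(s+1)\cap[v]$, and the conclusion of Proposition \ref{prop:undirected_3} reduces to $LO(\mathcal N(s)\cap[v])\le LO(\mathcal N(s+1)\cap[v])$. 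Chaining this from $s=v$ up to $s=n-2$ gives $LO(\mathcal N(v)\cap[v])\le LO(\mathcal N(s)\cap[v])$ for every $s\in\{v,v+1,\dots,n-1\}$.

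To finish, I would invoke the extremal property of the paper's (nonstandard) $LO(\cdot)$: a multiset containing no element smaller than $n$ is encoded by the all-$n$ sequence in $Seq(n-1,n)$, which is the unique maximizer of $LO(\cdot)$. As $\mathcal N(v)\cap[v]=\emptyset$, its $LO$ attains this maximum, so the chain of inequalities forces $LO(\mathcal N(s)\cap[v])$ to attain the maximum as well, hence $\mathcal N(s)\cap[v]=\emptyset$ for every $s\ge v$. Thus no node with index $\ge v$ has a neighbor with index $<v$, i.e.\ $G$ has no edge between the nonempty sets $\{0,\dots,v-1\}$ and $\{v,\dots,n-1\}$, contradicting connectivity (for $v=n-1$ this simply says node $n-1$ is isolated). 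The main obstacle is the index bookkeeping in the second step — verifying the truncation identities over the exact range $v\le s\le n-2$ and checking that the degenerate case $v=n-1$, where the chain is empty, is still covered; the rest follows directly from Proposition \ref{prop:undirected_3} and the definition of $LO(\cdot)$.
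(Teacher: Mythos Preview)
Your proof is correct and follows essentially the same approach as the paper's: both argue by contradiction, truncate the neighbor sets to an initial segment via Proposition~\ref{prop:undirected_3} applied to the chain of inequalities from Eq.~\eqref{eq:LO_undirected}, and exploit that the empty set has the maximal $LO$. The only cosmetic difference is that the paper frames the argument inductively and stops the chain at a specific witness $v'$ (a node beyond $v+1$ that does have a small-index neighbor), whereas you chain all the way to $n-1$ and conclude the entire cut $[v]\,|\,\{v,\dots,n-1\}$ is empty.
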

\begin{proof}
    Node $0$ itself is a connected graph. Assume that the subgraph induced by nodes $\{0,1,\dots, v\}$ is connected, it suffices to show that the subgraph induced by nodes $\{0,1,\dots, v+1\}$ is connected. Equivalently, we need to prove that there exists $u<v+1$ such that $A_{u,v+1}=1$.

    Assume that $A_{u,v+1}=0,~\forall u<v+1$. Since $G$ is connected, there exists $v'>v+1$ such that:
    \begin{equation*}
        \begin{aligned}
            \exists u<v+1,~\text{s.t.}~A_{u,v'}=1
        \end{aligned}
    \end{equation*}
    Then we know:
    \begin{equation*}
        \begin{aligned}
            \mathcal N(v+1)\cap [v+1]=\emptyset,~\mathcal N(v')\cap [v+1]\neq \emptyset
        \end{aligned}
    \end{equation*}
    Recall the definition of $LO(\cdot)$, we obtain that:
    \begin{equation}\label{eq:v+1>v'}
        \begin{aligned}
            LO(\mathcal N(v+1)\cap [v+1])>LO(\mathcal N(v')\cap [v+1])
        \end{aligned}
    \end{equation}
    
    Since the indexing satisfies Eq.~\eqref{eq:LO_undirected}, then we have:
    \begin{equation*}
        \begin{aligned}
            LO(\mathcal N(w)\backslash\{w+1\})\le LO(\mathcal N(w+1)\backslash \{w\}),~\forall v<w<v'
        \end{aligned}
    \end{equation*}
    Applying Proposition \ref{prop:undirected_3}:
    \begin{equation*}
        \begin{aligned}
            LO((\mathcal N(w)\backslash\{w+1\})\cap [v+1])\le LO((\mathcal N(w+1)\backslash \{w\})\cap [v+1]),~\forall v<w<v'
        \end{aligned}
    \end{equation*}
    Note that $w>v$. Therefore, 
    \begin{equation*}
        \begin{aligned}
            LO(\mathcal N(w)\cap [v+1])\le LO(\mathcal N(w+1)\cap [v+1]),~\forall v<w<v'
        \end{aligned}
    \end{equation*}
    Choosing $w=v+1$ gives:
    \begin{equation}\label{eq:v+1<=v'}
        \begin{aligned}
            LO(\mathcal N(v+1)\cap [v+1])\le LO(\mathcal N(v')\cap [v+1])
        \end{aligned}
    \end{equation}
    The contradiction between Eq.~\eqref{eq:v+1>v'} and Eq.~\eqref{eq:v+1<=v'} completes the proof.
\end{proof}

To break symmetry over digraphs, one can still use Eq.~\eqref{eq:LO_undirected} over underlying graphs, i.e.,
\begin{equation}\label{eq:break_symmetry_directed}
    \begin{aligned}
        \sum\limits_{u\neq v,v+1}2^{n-u-1}\cdot A_{u,v}^U \ge \sum\limits_{u\neq v,v+1}2^{n-u-1}\cdot A_{u,v+1}^U,~\forall v\in [n-1]
    \end{aligned}
\end{equation}

\subsection{Acyclic Digraphs}\label{subsec:symmetry_DAG}
Directions of edges are useful information and could also be utilized to further remove symmetries in some cases. Here we study one of the most popular type of digraphs, i.e., DAGs, and propose more powerful constraints to break symmetry. Similar to Section \ref{subsec:symmetry_undirected}, we assume that all DAGs are weakly connected and have $n$ nodes. The basic idea is similar to Eq.~\eqref{eq:LO_undirected}, but here we consider successor sets that are more informative than neighbor sets.

\begin{definition}[Successor sets]
    Given any acyclic digraph $G=(V,E)$, the successor set of node $v\in V$ is defined as $\mathcal S(v)=\{u\in V\backslash\{v\}~|~\text{node $v$ can reach node $u$}\}$.
\end{definition}

\begin{proposition}\label{prop:successor}
    Given any acyclic digraph $G$. For two different nodes $u$ and $v$, if node $u$ can reach node $v$, then $\mathcal S(v)\subsetneq\mathcal S(u)$.
\end{proposition}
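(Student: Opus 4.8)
The plan is to prove the two claims separately: first $\mathcal{S}(v) \subseteq \mathcal{S}(u)$, then strictness $\mathcal{S}(v) \neq \mathcal{S}(u)$.

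For the containment, take any $w \in \mathcal{S}(v)$. By definition $w \neq v$ and node $v$ can reach node $w$. Since node $u$ can reach node $v$ by hypothesis, composing the two walks shows node $u$ can reach node $w$ (reachability is transitive). It remains to check $w \neq u$: if $w = u$, then node $v$ reaches node $u$ while node $u$ reaches node $v$, so $u$ and $v$ lie on a common directed cycle, contradicting acyclicity (and $u \neq v$ rules out the trivial case). Hence $w \in \mathcal{S}(u)$, giving $\mathcal{S}(v) \subseteq \mathcal{S}(u)$.

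For strictness, observe that $v \in \mathcal{S}(u)$: indeed $v \neq u$ and node $u$ can reach node $v$ by hypothesis. On the other hand $v \notin \mathcal{S}(v)$ by definition (the successor set explicitly excludes the node itself), so $v$ witnesses $\mathcal{S}(u) \setminus \mathcal{S}(v) \neq \emptyset$. Combined with the containment, this yields $\mathcal{S}(v) \subsetneq \mathcal{S}(u)$.

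There is no real obstacle here; the statement follows directly from transitivity of reachability and the acyclicity assumption, which is only needed to exclude the degenerate situation $w = u$ in the containment step. The one subtlety worth flagging is the convention that $\mathcal{S}(\cdot)$ excludes the node itself — this is exactly what makes the inclusion strict and must be invoked explicitly.
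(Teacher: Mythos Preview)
Your proof is correct and follows essentially the same approach as the paper: transitivity of reachability for the inclusion, and the observation that $v\in\mathcal S(u)\setminus\mathcal S(v)$ for strictness. Your argument is in fact slightly more careful than the paper's, which omits the check that $w\neq u$ (where acyclicity is used).
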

\begin{proof}
    For any node $w$ that can be reached from node $v$, node $u$ can reach node $w$. Thus $\mathcal S(v)\subset\mathcal S(u)$. Meanwhile, since $v\in \mathcal S(u)$ while $v\not\in\mathcal S(v)$, we obtain that $\mathcal S(v)\subsetneq\mathcal S(u)$.
\end{proof}

Motivated by Proposition \ref{prop:successor}, we propose the following symmetry-breaking constraints:
\begin{equation}\label{eq:LO_DAG}
    \begin{aligned}
        LO(\mathcal S(v))\le LO(\mathcal S(v+1)),~\forall v\in [n-1]
    \end{aligned}
\end{equation}
Intuitively, if node $u$ has a successor with smaller index than all successors of node $v$, then node $u$ should have smaller index than node $v$. Note that we do not need to exclude the possible edge between node $v$ and node $v+1$ in Eq.~\eqref{eq:break_symmetry_DAG} like Eq.~\eqref{eq:break_symmetry_undirected}, as this edge must be from node $v$ to node $v+1$ if it exists. 

Using reachability variables $r_{u,v}$, Eq.~\eqref{eq:LO_DAG} can be linearly represented as:
\begin{equation}\label{eq:break_symmetry_DAG}
    \begin{aligned}
        \sum\limits_{u\neq v}2^{n-u-1}\cdot r_{v,u}\ge \sum\limits_{u\neq v+1}2^{n-u-1}\cdot r_{v+1,u},~\forall v\in [n-1]
    \end{aligned}
\end{equation}

Theorem \ref{thm:feasibility_DAG}, relying on Algorithm \ref{alg:indexing_DAG}, guarantees that for any weakly connected DAG, there exists at least one indexing satisfying Eq.~\eqref{eq:LO_DAG}.

\begin{theorem}\label{thm:feasibility_DAG}
    Given any weakly connected DAG $G=(V,E)$, the indexing yielded from Algorithm \ref{alg:indexing_DAG} satisfies Eq.~\eqref{eq:LO_DAG}.
\end{theorem}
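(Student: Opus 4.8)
The plan is to mirror the structure of the proof of Theorem \ref{thm:feasibility_undirected}, but working with successor sets $\mathcal S(\cdot)$ instead of neighbor sets $\mathcal N(\cdot)$. I expect Algorithm \ref{alg:indexing_DAG} to construct an indexing greedily, at each step $s$ choosing (among the unindexed nodes) one whose current ``partial successor set'' has minimal lexicographic order, after first restricting attention to nodes that can legitimately receive index $s$ — and here the acyclicity matters: a node should only be assigned the next index if none of its successors is still unindexed, since in a DAG a node's index must exceed the indices of all nodes it can reach if we want successor sets to be monotone. (In fact, because $G$ is a DAG, a topological-order constraint like ``$r_{u,v}=1 \Rightarrow \mathcal I(u)<\mathcal I(v)$'' is compatible with the greedy choice, and Proposition \ref{prop:successor} makes the lexicographic comparison well-behaved.) First I would record the analogues of Propositions \ref{prop:undirected_1}--\ref{prop:undirected_3}: that temporary indices separate indexed from unindexed nodes at each iteration, that partial successor sets at an earlier iteration are obtained from later ones by intersecting with an index prefix $[s_1]$, and the purely combinatorial monotonicity of $LO(\cdot)$ under such truncation (Proposition \ref{prop:undirected_3}, which I can reuse verbatim).

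Next I would prove the DAG analogue of Lemma \ref{lem:undirected_1}: if $\mathcal I(u)<\mathcal I(v)$ then $\mathcal I^s(u)\le\mathcal I^s(v)$ for all $s\in[n]$. The argument splits into the same three cases (both indexed before $s$; one indexed, one not; neither indexed before $s$), and in the last case one uses that $u$ was chosen at iteration $\mathcal I(u)$, hence its temporary successor set had minimal order there, then pushes this comparison down to iteration $s$ via Propositions \ref{prop:undirected_2}-analogue and \ref{prop:undirected_3}. With this lemma in hand, I would argue by contradiction: suppose $s$ is the smallest index at which Eq.~\eqref{eq:LO_DAG} fails, let $u,v$ be the nodes with $\mathcal I(u)=s$, $\mathcal I(v)=s+1$, and let $a,b\in Seq(n-1,n)$ be the sorted-index sequences of $\mathcal S(u)$ and $\mathcal S(v)$. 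Since $LO(\mathcal S(u))>LO(\mathcal S(v))$ there is a first position $k$ with $a_i=b_i$ for $i<k$ and $a_k>b_k$; the shared prefix means $u$ and $v$ have the same $k-1$ lowest-indexed successors, and $v$ has a successor $v_k$ with $\mathcal I(v_k)=b_k<a_k$ that is not a successor of $u$. Comparing the temporary successor sets $\mathcal S^s_t(u)$ and $\mathcal S^s_t(v)$ at iteration $s$ (where $u$ was chosen) yields $\mathcal I^s(u_k)\le\mathcal I^s(v_k)$ when $a_k<n$, hence $LO(\mathcal S^s(u_k))\le LO(\mathcal S^s(v_k))$; but at iteration $s{+}1$ the newly-indexed node $s$ enters... \emph{something}, and here is where the DAG structure forces the contradiction differently from the undirected case.

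The main obstacle — and the place where the proof genuinely diverges from Theorem \ref{thm:feasibility_undirected} — is the final contradiction step. In the undirected proof, indexing node $u$ at step $s$ adds index $s$ to the \emph{neighbor} sets of $u$'s neighbors, which decreases their lexicographic order; but for successor sets, indexing $u$ does not add $u$ to anyone's successor set (successors point the other way). Instead I expect the key monotonicity to come from \emph{predecessor}/ancestor structure or, more likely, from the fact that in a DAG, once all of a node's successors are indexed it becomes eligible, and the relative order among eligible nodes is governed by Proposition \ref{prop:successor}: if $u$ can reach $w$ then $\mathcal S(w)\subsetneq\mathcal S(u)$, so $w$ becomes eligible no later than $u$ and $LO(\mathcal S(w))<LO(\mathcal S(u))$, forcing $\mathcal I(w)<\mathcal I(u)$. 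I will need to show that the node $v_k$ (a successor of $v$ but not of $u$) gets indexed before $v$ in a way incompatible with $\mathcal I(v)=s+1$ and $a_k>b_k$, likely by tracking when $v_k$ versus some successor of $u$ becomes eligible and invoking the DAG-analogue of Lemma \ref{lem:undirected_1} one iteration later. Handling the boundary case $a_k=n$ (i.e.\ $u$ has strictly fewer successors than the shared prefix length), where $\mathcal S(v)\supseteq\mathcal S(u)\cup\{v_k\}$ properly contains $\mathcal S(u)$, should again directly contradict $u$ being chosen over $v$ at iteration $s$, exactly as in Case I of the undirected proof.
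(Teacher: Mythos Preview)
Your proposal rests on a misreading of Algorithm~\ref{alg:indexing_DAG}: it does \emph{not} index from $0$ upward choosing the node of \emph{minimal} lexicographic order. It runs \emph{backward}, starting at $s=n-1$ and decrementing, and at each step selects $v^{s}=\arg\max_{v\in V_2^{s}} LO(\mathcal S^{s}_{t}(v))$. Moreover, the temporary index of every unindexed node is simply $s$ (no rank term). Because of this, your planned analogues of Propositions~\ref{prop:undirected_1}--\ref{prop:undirected_2} and especially of Lemma~\ref{lem:undirected_1} do not fit the actual algorithm, and your choice of ``smallest violating $s$'' is the wrong end to work from.

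The paper's proof is far shorter than the undirected case and does not mirror it. It relies on two simple facts specific to the backward pass: Proposition~\ref{prop:DAG_1} (the output is a topological order, so every successor of a node receives a strictly larger index) and Proposition~\ref{prop:DAG_2} (once all successors of $v$ have been indexed --- i.e.\ for any $s$ below the smallest successor index --- the temporary set $\mathcal S_t^{s}(v)$ \emph{equals} the true $\mathcal S(v)$). Taking the \emph{maximal} violating $s$, with $\mathcal I(u)=s$ and $\mathcal I(v)=s{+}1$, one looks at iteration $s{+}1$, where both $u$ and $v$ are still unindexed. If $\mathcal S(u)=\emptyset$ the contradiction is immediate. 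Otherwise the smallest-index successor $u_1$ of $u$ has $\mathcal I(u_1)>s$; either $u_1=v$ (then Proposition~\ref{prop:successor} gives $LO(\mathcal S(u))<LO(\mathcal S(v))$, contradiction), or $\mathcal I(u_1)>s{+}1$, in which case Proposition~\ref{prop:DAG_2} gives $\mathcal S_t^{s+1}(u)=\mathcal S(u)$ and $\mathcal S_t^{s+1}(v)=\mathcal S(v)$, so $LO(\mathcal S_t^{s+1}(u))>LO(\mathcal S_t^{s+1}(v))$ contradicts $v$ being the argmax at step $s{+}1$. The gap you flagged (``the newly-indexed node $s$ enters\ldots\ something'') dissolves precisely because the algorithm runs in reverse: nothing needs to ``enter'' a successor set --- the relevant sets are already frozen at their final values by the time the comparison is made.
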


\begin{algorithm}[t]
    \caption{Indexing algorithm for DAGs}\label{alg:indexing_DAG}
    \begin{algorithmic}
    \State \textbf{Input:} $G=(V,E)$ with node set $V=\{v_0,v_1,\dots,v_{n-1}\}$. $\mathcal S(v)$ is the successor set of node $v\in V$.
    \State $s\gets n-1$ \Comment{index starts from $n-1$}
    \State $V_1^{n-1}\gets\emptyset$ \Comment{initialize set of indexed nodes}
    \While{$s\ge 0$}
    \State $V_2^s\gets V\backslash V_1^s$ \Comment{set of unindexed nodes}   
    \State $\mathcal I^{s}(v)\gets 
        \begin{cases}
            \mathcal I(v),&\forall v\in V_1^{s}\\
            s,&\forall v\in V_2^{s}
        \end{cases}$ \Comment{assign temporary indexes}
    \State $\mathcal S^{s}_{t}(v)\gets \{\mathcal I^{s}(u)~|~u\in\mathcal S(v)\},~\forall v\in V_2^{s}$ \Comment{define temporary successor sets based on $\mathcal I^{s}$}
    \State $v^{s}\gets \arg\max\limits_{v\in V_2^{s}} LO(\mathcal S^{s}_{t}(v))$ \Comment{successor set of $v^{s}$ has maximal order}
    \State \Comment{if multiple nodes share the same maximal order, arbitrarily choose one}
    \State $\mathcal I(v^s)=s$ \Comment{index $s$ to node $v^s$}  
    \State $V_1^{s-1}\gets V_1^{s}\cup \{v^{s}\}$ \Comment{add $v^{s}$ to set of indexed nodes}   
    \State $s\gets s-1$ \Comment{next index is $s-1$}
    \EndWhile
    \State \textbf{Output:} $\{\mathcal I(v)\}_{v\in V}$ \Comment{result indexing}
    \end{algorithmic}
\end{algorithm}

\begin{proposition}\label{prop:DAG_1}
    For any two different nodes $u$ and $v$, if node $u$ can reach node $v$, then $\mathcal I(u)<\mathcal I(v)$. In other words, $\{\mathcal I(v)\}_{v\in V}$ is a topological ordering.
\end{proposition}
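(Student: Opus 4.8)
The plan is to argue by contradiction, paralleling the structure used for the undirected case (Theorem~\ref{thm:feasibility_undirected}) but with successor sets replacing neighbor sets and with the iteration counter running downward. Suppose node $u$ can reach node $v$ with $u\neq v$, yet $\mathcal I(u)\ge \mathcal I(v)$; since distinct nodes receive distinct indices, in fact $\mathcal I(u)>\mathcal I(v)$. Set $s=\mathcal I(u)$. Then $u$ is the node $v^s$ selected at iteration $s$ of Algorithm~\ref{alg:indexing_DAG}, so $u\in V_2^s$ and $LO(\mathcal S^s_t(u))=\max_{w\in V_2^s} LO(\mathcal S^s_t(w))$. Moreover, because $\mathcal I(v)<s$ and the loop assigns indices $n-1,n-2,\dots$ in decreasing order, $v$ is still unindexed at the start of iteration $s$, i.e. $v\in V_2^s$. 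It therefore suffices to derive $LO(\mathcal S^s_t(u))<LO(\mathcal S^s_t(v))$, contradicting the maximality of $u$.

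The heart of the proof is the following claim: \emph{if $u,v\in V_2^s$, $u\neq v$, and $u$ can reach $v$, then $LO(\mathcal S^s_t(u))<LO(\mathcal S^s_t(v))$}; equivalently, the node chosen at every iteration does not reach any other currently unindexed node. To prove it, invoke Proposition~\ref{prop:successor} to get $\mathcal S(v)\subsetneq\mathcal S(u)$ with $v\in\mathcal S(u)\setminus\mathcal S(v)$. Under the temporary relabelling $\mathcal I^s$, each already-indexed successor keeps an index $>s$ (it was indexed at an earlier, higher-numbered iteration), while each unindexed successor --- in particular $v$ itself, a successor of $u$ --- receives the value $s$. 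Writing $q=|\mathcal S(v)\cap V_2^s|$ for the number of unindexed successors of $v$, the sorted, $n$-padded sequence (length $n-1$, pad symbol $n$) of the multiset $\mathcal S^s_t(v)$ begins with exactly $q$ copies of $s$ followed by an entry strictly larger than $s$ (either an already-used index in $\{s+1,\dots,n-1\}$ or the pad $n$), whereas the corresponding sequence for $\mathcal S^s_t(u)$ begins with at least $q+1$ copies of $s$, since $\mathcal S(u)\cap V_2^s\supseteq(\mathcal S(v)\cap V_2^s)\cup\{v\}$. Hence the two sequences agree in their first $q$ positions and differ at position $q+1$, where $u$'s entry is $s$ and $v$'s entry exceeds $s$; by the definition of $LO$ this gives $LO(\mathcal S^s_t(u))<LO(\mathcal S^s_t(v))$. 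The one point needing care is that position $q+1$ lies within the length-$(n-1)$ window, i.e. $q\le n-2$: this is where acyclicity enters, because $u$ reaching $v$ forbids $v$ reaching $u$, so $u\notin\mathcal S(v)$ and therefore $|\mathcal S(v)|\le n-2$.

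Combining the claim with the reduction in the first paragraph yields the contradiction, hence $\mathcal I(u)<\mathcal I(v)$; applying this to every reachable pair shows $\{\mathcal I(v)\}_{v\in V}$ is a topological ordering. I expect the main obstacle to be the lexicographic bookkeeping in the claim --- specifically, keeping track of the multiplicities of the symbol $s$ in the temporary successor multisets and confirming that the first position of disagreement is both where $u$ loses and inside the padded range; the latter is exactly the spot where the DAG hypothesis (via Proposition~\ref{prop:successor} and the absence of $2$-cycles) is indispensable. Everything else --- that $v\in V_2^s$, that already-indexed nodes carry indices above $s$, and that $LO$ is monotone under the relevant multiset operations --- is routine once the right claim is isolated.
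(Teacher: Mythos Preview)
Your proposal is correct and follows essentially the same route as the paper: argue by contradiction at iteration $s=\mathcal I(u)$, use Proposition~\ref{prop:successor} to obtain $\mathcal S(v)\subsetneq\mathcal S(u)$, and conclude $LO(\mathcal S^s_t(u))<LO(\mathcal S^s_t(v))$, contradicting the $\arg\max$ choice. The only difference is cosmetic: the paper compresses your multiplicity-of-$s$ argument into the one-line observation that $\mathcal S(v)\subsetneq\mathcal S(u)$ implies $\mathcal S^s_t(v)\subsetneq\mathcal S^s_t(u)$ as multisets, and that proper multiset containment forces the strict $LO$ inequality directly from the padding convention.
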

\begin{proof}
    Assume that $I(u)>I(v)$, then we have:
    \begin{equation*}
        \begin{aligned}
            \text{node $u$ can reach node $v$} &\Rightarrow \mathcal S(v)\subsetneq \mathcal S(u)&&\longleftarrow \text{Proposition \ref{prop:successor}}\\
            &\Rightarrow \mathcal S_t^{\mathcal I(u)}(v)\subsetneq \mathcal S_t^{\mathcal I(u)}(u)&&\longleftarrow\text{definition of $\mathcal S^{\mathcal I(u)}(\cdot)$}\\
            &\Rightarrow LO(\mathcal S_t^{\mathcal I(u)}(v))>LO(\mathcal S_t^{\mathcal I(u)}(u))&&\longleftarrow\text{definition of $LO(\cdot)$}
        \end{aligned}
    \end{equation*}
    which contradicts $u=\arg\max_{w\in V_2^{\mathcal I(u)}} LO(\mathcal S_t^{\mathcal I(u)})(w)$.
\end{proof}

\begin{proposition}\label{prop:DAG_2}
    For any node $v$ such that $\mathcal S(v)\neq \emptyset$, let $v_1$ be the successor of $v$ with the smallest index, then we have $\mathcal S_t^{s}(v)=\mathcal S(v),~\forall 0\le s<\mathcal I(v_1)$.
\end{proposition}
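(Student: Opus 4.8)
The plan is to read the asserted identity $\mathcal S_t^s(v)=\mathcal S(v)$ as a statement about the temporary index map: the map $\mathcal I^s$ is defined on all of $V$ ($\mathcal I^s(w)=\mathcal I(w)$ if $w$ is already indexed by the time the loop reaches $s$, and $\mathcal I^s(w)=s$ otherwise), so the formula $\mathcal S_t^s(v)=\{\mathcal I^s(u)\mid u\in\mathcal S(v)\}$ makes sense for every $s$ (Algorithm~\ref{alg:indexing_DAG} only bothers to form it when $v\in V_2^s$). Identifying the right-hand side $\mathcal S(v)$ with the set $\{\mathcal I(u)\mid u\in\mathcal S(v)\}$ of final indices of its elements --- the identification implicit in the notation $LO(\mathcal S(v))$ --- the claim becomes: every successor $u$ of $v$ satisfies $\mathcal I^s(u)=\mathcal I(u)$ whenever $0\le s<\mathcal I(v_1)$, i.e., all successors of $v$ have already been indexed before the loop reaches any such $s$.

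The argument itself is one line once the right fact is in hand. Because the loop counts $s$ down from $n-1$ and appends the freshly indexed node $v^s$ to $V_1^{s-1}$, one has $w\in V_1^s\iff \mathcal I(w)>s$. By Proposition~\ref{prop:DAG_1} the assignment $\{\mathcal I(w)\}_{w\in V}$ is a topological ordering, so $v$ reaching every $w\in\mathcal S(v)$ forces $\mathcal I(v)<\mathcal I(w)$, and by minimality of $\mathcal I(v_1)$ among the successor indices, $\mathcal I(w)\ge \mathcal I(v_1)$ for every $w\in\mathcal S(v)$. Hence, for $0\le s<\mathcal I(v_1)$, every $w\in\mathcal S(v)$ satisfies $\mathcal I(w)\ge \mathcal I(v_1)>s$, so $w\in V_1^s$ and $\mathcal I^s(w)=\mathcal I(w)$. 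Therefore $\mathcal S_t^s(v)=\{\mathcal I^s(w)\mid w\in\mathcal S(v)\}=\{\mathcal I(w)\mid w\in\mathcal S(v)\}$, which is independent of $s$ and equals $\mathcal S(v)$; and since distinct $w\in\mathcal S(v)$ have distinct final indices, no multiset collapse occurs, so the equality is literal.

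I do not anticipate a genuine obstacle: the proposition is an immediate corollary of Proposition~\ref{prop:DAG_1} together with the definition of $v_1$. The only things that need care are bookkeeping ones --- getting the direction of the descending index loop right, so that $V_1^s$ is the set of high-index (already indexed) nodes, and being explicit about the identification of a successor set with its set of final indices. This proposition plays the role for DAGs that Proposition~\ref{prop:undirected_2} plays for undirected graphs, but it is simpler: in the window of iterations below $\mathcal I(v_1)$, none of $v$'s successors has yet been relabelled down to the running value $s$, so the temporary successor set of $v$ does not change at all.
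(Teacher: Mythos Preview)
Your proof is correct and follows the same idea as the paper's own proof: for $s<\mathcal I(v_1)$ every successor of $v$ has index at least $\mathcal I(v_1)>s$ and hence lies in $V_1^s$, so $\mathcal I^s$ agrees with $\mathcal I$ on $\mathcal S(v)$ and $\mathcal S_t^s(v)=\mathcal S(v)$. The paper's proof is a one-liner that does not invoke Proposition~\ref{prop:DAG_1} (indeed, the minimality of $\mathcal I(v_1)$ alone suffices, as you yourself observe), but your extra bookkeeping about the descending loop and the identification of $\mathcal S(v)$ with its index set only makes the argument more explicit, not different.
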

\begin{proof}
    By definition of $S_t^s(\cdot)$, we have $\mathcal S(v)\subset V_1^s,~\forall s<\mathcal I(v_1)$. Therefore, $\mathcal S_t^s(v)=\mathcal S(v)$.
\end{proof}
    
Now we prove Theorem \ref{thm:feasibility_DAG} using Propositions \ref{prop:DAG_1}--\ref{prop:DAG_2}.

\begin{proof}[Proof of Theorem \ref{thm:feasibility_DAG}.]
    Assume that Eq.~\eqref{eq:LO_DAG} is not satisfied and the maximal index that violates Eq.~\eqref{eq:LO_DAG} is $s$. Denote nodes with index $s$ and $s+1$ by $u,v$, respectively, i.e., $\mathcal I(u)=s,~\mathcal I(v)=s+1$. By assumption we have $LO(\mathcal S(u))>LO(\mathcal S(v))$, which implies that $\mathcal S(v)\neq \emptyset$.

    \textbf{Case I: } If $\mathcal S(u)=\emptyset$, then $\mathcal S_t^{s+1}(u)=\emptyset$ while $\mathcal S_t^{s+1}(v)\neq \emptyset$. Thus we have:
    \begin{equation*}
        \begin{aligned}
            LO(\mathcal S_t^{s+1}(u))>LO(\mathcal S_t^{s+1}(v))
        \end{aligned}
    \end{equation*}
    which contradicts the assumption that $\mathcal I(v)=s+1$.

    \textbf{Case II:} If $\mathcal S(u)\neq \emptyset$, denote the successor of $u$ with the smallest index as $u_1$. From Proposition \ref{prop:DAG_1} we know that $\mathcal I(u_1)>\mathcal I(u)=s$. If $\mathcal I(u_1)=s+1$, i.e., $u_1=v$, then $LO(\mathcal S(u))<LO(\mathcal S(v))$ since $u$ can reach $v$. Otherwise, if $\mathcal I(u_1)>s+1$, using Proposition \ref{prop:DAG_2} twice gives that:
    \begin{equation*}
        \begin{aligned}
            \mathcal S_t^{s+1}(v)=\mathcal S(v),~\mathcal S_t^{s+1}(u)=\mathcal S(u)
        \end{aligned}
    \end{equation*}
    Then we have that:
    \begin{equation*}
        \begin{aligned}
            LO(\mathcal S_t^{s+1}(v))=LO(\mathcal S(v))<LO(\mathcal S(u))=LO(\mathcal S_t^{s+1}(u))
        \end{aligned}
    \end{equation*}
    which contradicts the assumption that $\mathcal I(v)=s+1$.
\end{proof}

\begin{conjecture}\label{conj:ancestor}
    Eq.~\eqref{eq:LO_DAG} can not order two nodes that share the same successor sets. We conjecture that there exists one indexing satisfying both Eq.~\eqref{eq:LO_DAG} and the following constraints:
    \begin{equation}\label{eq:LO_DAG_plus}
        \begin{aligned}
            LO(\mathcal S(v))=LO(\mathcal S(v+1))\Rightarrow LO(\mathcal A(v))\le LO(\mathcal A(v+1)),~\forall v\in [n-1]
        \end{aligned}
    \end{equation}
    where $\mathcal A(v)$ is the ancestor set of node $v$, i.e., $\mathcal A(v)=\{u\in V\backslash\{v\}~|~\text{node $u$ can reach node $v$}\}$. Eq.~\eqref{eq:LO_DAG_plus} could be rewritten as the following linear form with $2^n$ as a big-M coefficient:
    \begin{equation}\label{eq:break_symmetry_DAG_plus}
        \begin{aligned}
            \sum\limits_{u\neq v}2^{n-u-1}\cdot r_{u,v}\ge\sum\limits_{u\neq v+1}2^{n-u-1}\cdot r_{u,v+1}-2^n\cdot \left(\sum\limits_{u\neq v}2^{n-u-1}\cdot r_{v,u}-\sum\limits_{u\neq v+1}2^{n-u-1}\cdot r_{v+1,u}\right)
        \end{aligned}
    \end{equation}
    However, we have not figured out a general way to construct a feasible indexing. Numerically, we verify the correctness of Eq.~\eqref{eq:LO_DAG_plus} for $n\le 7$, i.e., for any weakly connected DAG with no more than $7$ nodes, there exists an indexing satisfying both Eq.~\eqref{eq:LO_DAG} and Eq.~\eqref{eq:LO_DAG_plus}.
\end{conjecture}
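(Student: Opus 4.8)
\textbf{Towards a proof of Conjecture~\ref{conj:ancestor}.} Since this is only a conjecture, I will describe the construction I would attempt and point out where the argument currently stalls. The natural starting point is the block structure induced by successor sets. Call two nodes \emph{equivalent} if they have the same successor set; by Proposition~\ref{prop:successor} equivalent nodes are pairwise incomparable, so each equivalence class is an antichain. Because all node indices are distinct, two nodes have the same value of $LO(\mathcal S(\cdot))$ under \emph{every} indexing if and only if they are equivalent; hence an indexing satisfying Eq.~\eqref{eq:LO_DAG} is precisely one in which the classes receive consecutive, increasing index ranges ordered by $LO$ of their common successor set, with an \emph{arbitrary} internal permutation allowed inside each class (permuting a class changes no $LO(\mathcal S(\cdot))$, since a class is disjoint from every successor set in which it could appear). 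This reduces Eq.~\eqref{eq:LO_DAG_plus} to a statement purely about ordering \emph{within} each class: between the last node of one class and the first node of the next, the premise of Eq.~\eqref{eq:LO_DAG_plus} already fails, so only consecutive nodes \emph{inside} a class matter, and for those we want the index order to agree with the order of $LO(\mathcal A(\cdot))$.

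The next observation concerns when ancestor sets become frozen. If $u$ is an ancestor of $v$ then $\mathcal S(v)\subsetneq\mathcal S(u)$ by Proposition~\ref{prop:successor}, hence $LO(\mathcal S(v))<LO(\mathcal S(u))$, so $u$'s class precedes $v$'s class strictly in the block order. Consequently, if we process the classes in their block order (assigning the smallest index range first), then by the time we reach a class $B$ every ancestor of every node of $B$ already has its final index; and since no node of $B$ is an ancestor of another, the value $LO(\mathcal A(v))$ for $v\in B$ is completely determined at that moment. The plan is therefore: process classes in block order; within the current class $B$ sort its nodes in increasing order of $LO(\mathcal A(v))$ (breaking remaining ties arbitrarily) and give them consecutive indices accordingly. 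This forces Eq.~\eqref{eq:LO_DAG_plus} by construction, and one would invoke Theorem~\ref{thm:feasibility_DAG} together with the arbitrariness of intra-class permutations to keep Eq.~\eqref{eq:LO_DAG}.

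The main obstacle — and, I believe, the reason this remains a conjecture — is the coupling between the two requirements. Choosing the block order so that the realized indices satisfy $LO(\mathcal S(B_1))<LO(\mathcal S(B_2))<\cdots$ is not a static combinatorial fact: for a node $v$ in a later class, $LO(\mathcal S(v))$ depends on the \emph{indices} of its successors, and those successors may lie in an earlier class whose internal order we fixed for the sake of Eq.~\eqref{eq:LO_DAG_plus}, not for Eq.~\eqref{eq:LO_DAG}. So the ancestor-driven permutation of an early class can, a priori, reshuffle the $LO(\mathcal S(\cdot))$ values of later classes and break the block ordering Eq.~\eqref{eq:LO_DAG} requires. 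What is missing is a monotonicity lemma — the analogue, for this combined successor/ancestor order, of Lemma~\ref{lem:undirected_1} (or of Propositions~\ref{prop:DAG_1}--\ref{prop:DAG_2} as used in the proof of Theorem~\ref{thm:feasibility_DAG}) — guaranteeing that reordering earlier classes by ancestor order never disturbs the successor-order comparison between later classes.

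Failing a clean construction, the fallback I would try is the contradiction scheme of the proof of Theorem~\ref{thm:feasibility_DAG}: run a tie-broken variant of Algorithm~\ref{alg:indexing_DAG} (descending from $n-1$, pick maximal temporary $LO(\mathcal S_t)$, and among ties pick maximal temporary $LO(\mathcal A_t)$), let $s$ be the \emph{largest} index at which Eq.~\eqref{eq:LO_DAG} or Eq.~\eqref{eq:LO_DAG_plus} fails, and, using that Eq.~\eqref{eq:LO_DAG} already holds, reduce to the case $\mathcal S(u)=\mathcal S(v)$ for the nodes $u,v$ with $\mathcal I(u)=s$ and $\mathcal I(v)=s+1$, so that the violation is $LO(\mathcal A(u))>LO(\mathcal A(v))$. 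One would then want the step-$(s+1)$ tie-break (where $v$ beat $u$) to propagate down to the final ancestor sets. The difficulty resurfaces sharply here: at step $s+1$ none of $u$'s or $v$'s ancestors is yet indexed, so the temporary comparison $LO(\mathcal A_t^{s+1}(v))\ge LO(\mathcal A_t^{s+1}(u))$ essentially only compares $|\mathcal A(u)|$ with $|\mathcal A(v)|$ and is far too weak to force the final comparison. Bridging that gap — either a tie-break rule with enough look-ahead into the ancestor structure, or a nonconstructive argument that some feasible indexing must exist — is the crux, and it is exactly what is not yet available beyond the $n\le 7$ verification.
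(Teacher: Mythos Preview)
The paper does not prove this statement: it is explicitly left as a conjecture, with the authors saying they ``have not figured out a general way to construct a feasible indexing'' and offering only the numerical check for $n\le 7$. Your write-up correctly treats it as open and explores possible attacks, so there is no proof in the paper to compare against.

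One claim in your first paragraph is wrong, and it is not a side remark --- it is the hinge of your proposed reduction. You assert that an indexing satisfies Eq.~\eqref{eq:LO_DAG} precisely when the successor-equivalence classes occupy consecutive blocks ``with an \emph{arbitrary} internal permutation allowed inside each class (permuting a class changes no $LO(\mathcal S(\cdot))$, since a class is disjoint from every successor set in which it could appear).'' The justification is false: a class $B$ is disjoint from its own common successor set, but it is certainly not disjoint from $\mathcal S(w)$ when $w$ is an ancestor of a node in $B$. Concretely, take $a\to c\to e$ and $b\to d\to e$; then $\{c,d\}$ is a class (both have successor set $\{e\}$), and the indexing $(a,b,c,d,e)=(0,1,2,3,4)$ satisfies Eq.~\eqref{eq:LO_DAG}, but swapping $c\leftrightarrow d$ gives $LO(\mathcal S(0))=LO(\{3,4\})>LO(\{2,4\})=LO(\mathcal S(1))$, violating Eq.~\eqref{eq:LO_DAG} at $v=0$. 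So the ``consecutive blocks'' part of your characterization is right, but the ``arbitrary internal permutation'' part is not --- and this is exactly the coupling you flag two paragraphs later as the main obstacle. Your third paragraph is therefore not a new difficulty layered on top of a clean reduction; it is the flaw in the reduction itself. The remaining observations (ancestors of $B$ lie in strictly earlier blocks, so $LO(\mathcal A(\cdot))$ is frozen when $B$ is reached; the tie-broken Algorithm~\ref{alg:indexing_DAG} variant and why its step-$(s{+}1)$ ancestor comparison is too coarse) are sound and line up with the paper's own assessment that a construction is currently missing.
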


\subsection{Discussion}\label{subsec:discussion}

Symmetry handling is vital in MIP, and different approaches are proposed via fundamental domains \citep{margot2009symmetry,verschae2023geometry}, symmetry-handling constraints \citep{friedman2007fundamental,kaibel2008packing,liberti2008automatic,kaibel2011orbitopal,liberti2012reformulations,liberti2014stabilizer,hojny2019polytopes,hojny2020packing,hojny2023impact}, and variable domain reductions \citep{margot2002pruning,margot2003exploiting,ostrowski2011orbital}, etc. Our approaches fit in the constraints-based symmetry-handling literature, as we focus on the theoretical correctness of the proposed lexicographic constraints, i.e., Eq.~\eqref{eq:LO_undirected} and Eq.~\eqref{eq:LO_DAG}. To show their efficiency, we represent them into linear forms, i.e., Eq.~\eqref{eq:break_symmetry_undirected} and Eq.~\eqref{eq:break_symmetry_DAG}, and conduct experiments in small scales in Section \ref{sec:results}. However, as the graph size increases, those power of two coefficients might cause numerical instabilities. An alternative is propagating lexicographic constraints during branch-and-bound \citep{van2024unified,hojny2025detecting}.

The closest setting to ours in symmetry-handling literature is distributing $m$ different jobs to $n$ identical machines and then minimizing the total cost. Binary variable $A_{i,j}$ denotes if job $i$ is assigned to machine $j$. The requirement is that each job can only be assigned to one machine (but each machine can be assigned to multiple jobs). Symmetries come from all permutations of machines. This setting appears in noise dosage problems \citep{sherali2001improving,ghoniem2011defeating}, packing and partitioning orbitopes \citep{kaibel2008packing,faenza2009extended}, and scheduling problems \citep{ostrowski2010symmetry}. However, requiring that the sum of each row in $A_{i,j}$ equals to $1$ simplifies the problem. By forcing decreasing lexicographical orders for all columns, the symmetry issue is handled well. Eq.~\eqref{eq:LO_undirected} can be regarded as a non-trivial generalization of these constraints from a bipartite graph to an arbitrary undirected graph: following Algorithm \ref{alg:indexing_undirected} will produce the same indexing documented in \cite{sherali2001improving,ghoniem2011defeating,kaibel2008packing,faenza2009extended,ostrowski2010symmetry}.

\begin{table}[]
    \centering
    \caption{Performance of symmetry-breaking constraints. For DAGs, we test both Eq.~\eqref{eq:break_symmetry_directed} and Eq.~\eqref{eq:break_symmetry_DAG}.}
    \label{tab:symmetry_breaking_performance_general}
    \begin{tabular}{cccrrrrrr}
        \toprule
         Space & Setting & OEIS & $n=3$ & $n=4$ & $n=5$ & $n=6$ & $n=7$ \\
        \midrule
        \multirow{3}{*}{$\mathcal G_{n,n}^{C}$} & labeled & \href{https://oeis.org/A001187}{\texttt{A001187}} & 4 & 38 & 728 & 26,704 & 1,866,256\\
                              & Eq.~\eqref{eq:break_symmetry_undirected} & N/A & 2 & 6 & 31 & 262 & 3,628\\
                              & unlabeled & \href{https://oeis.org/A001349}{\texttt{A001349}} & 2 & 6 & 21 & 112 & 853\\
        \midrule
        \multirow{3}{*}{$\mathcal G_{n,n}^{S}$} & labeled & \href{https://oeis.org/A003030}{\texttt{A003030}} & 18 & 1,606 & 565,080 & 734,774,776 & - \\
                              & Eq.~\eqref{eq:break_symmetry_directed} & N/A & 16 & 720 & 84,481 & -  & - \\
                              & unlabeled & \href{https://oeis.org/A035512}{\texttt{A035512}} & 5 & 83 & 5,048 & 1,047,008 & -\\
        \midrule
        \multirow{3}{*}{$\mathcal G_{n,n}^{W}$} & labeled & \href{https://oeis.org/A003027}{\texttt{A003027}} & 54 & 3,834 & 1,027,080 & 1,067,308,488 & - \\
                              & Eq.~\eqref{eq:break_symmetry_directed} & N/A & 36 & 1,188 & 113,157 & - & - \\
                              & unlabeled & \href{https://oeis.org/A003085}{\texttt{A003085}} & 13 & 199 & 9,364 & 1,530,843  & -\\
        \midrule
        \multirow{5}{*}{$\mathcal G_{n,n}^{D,W}$} & labeled & \href{https://oeis.org/A082402}{\texttt{A082402}} & 18 & 446 & 26,430 & 3,596,762 & 1,111,506,858 \\
                              & Eq.~\eqref{eq:break_symmetry_directed} & N/A & 10 & 84 & 1,312 & 39,846 & - \\
                              & Eq.~\eqref{eq:break_symmetry_DAG} & N/A & 4 & 31 & 450 & 12,175 & 627,846 \\
                              & Eqs.~\eqref{eq:break_symmetry_DAG},~\eqref{eq:break_symmetry_DAG_plus} & N/A & 4 & 26 & 326 & 7,769 & 359,396 \\
                              & unlabeled & \href{https://oeis.org/A101228}{\texttt{A101228}} & 4 & 24 & 267 & 5,647 & 237,317\\
        \midrule
        \multirow{5}{*}{$\mathcal G_{n,n}^{D,st}$} & labeled & \href{https://oeis.org/A165950}{\texttt{A165950}} & 12 & 216 & 10,600 & 1,306,620 & 384,471,444 \\
                              & Eq.~\eqref{eq:break_symmetry_directed} & N/A & 8 & 56 & 696 & 17,620 & 978,548 \\
                              & Eq.~\eqref{eq:break_symmetry_DAG} & N/A & 2 & 10 & 114 & 2,730 & 132,978  \\
                              & Eqs.~\eqref{eq:break_symmetry_DAG},~\eqref{eq:break_symmetry_DAG_plus} & N/A & 2 & 10 & 106 & 2,314 & 102,538 \\
                              & unlabeled & \href{https://oeis.org/A345258}{\texttt{A345258}} & 2 & 10 & 98 & 1,960 & 80,176\\
        \bottomrule
    \end{tabular}
    
\end{table}

\section{Numerical Results}\label{sec:results}
In this section, we evaluate the performance of symmetry-breaking constraints introduced in Section \ref{sec:symmetry_breaking} over various weakly connected graph spaces that are unlabeled. For each space, we count the number of feasible solutions under three settings: (a) labeled graphs, i.e., without breaking symmetry, (b) break symmetry using our constraints, and (c) unlabeled graphs, i.e., breaking all symmetry. We present results for both (a) and (c) from their corresponding OEIS. Setting (a) could also be obtained from our graph encoding as shown in Table \ref{tab:summary_encoding}. For setting (b), we use Gurobi \citep{gurobi2024} to count all feasible solutions by setting Gurobi parameter \textrm{PoolSearchMode=$2$}. If the number of feasible solutions exceeds one million, we will not count them due to its high consumption of time and memory (by setting Gurobi parameter \textrm{PoolSolutions=$10^6$}).

\section{Conclusion}\label{sec:conclusion}
This work studies how to represent graph space with corresponding reachability and shortest paths information. A general encoding and its simplified version with proper assumption are proposed. We explicitly show that both encoding could be easily restricted to specific graph types by incorporating extra constraints. Symmetry issues arising from node indexing are resolved by lexicographic constraints. The correctness and completeness of our proposed symmetry-breaking constraints are theoretically guaranteed, and experiments on small scales (where we can exhaust all feasible graphs) show their efficiency. For future work, we might consider (i) graph formulations with more graph properties encoded, (ii) alternatives of implementing the lexicographic constraints to avoid the numerical instabilities as the graph size increases, and (iii) more symmetry-breaking techniques in graph search space. 

\section*{Acknowledgments}
This work was supported by the Engineering and Physical Sciences Research Council [grant no. EP/W003317/1], BASF SE, Ludwigshafen am Rhein to SZ, and a BASF/RAEng Research Chair in Data-Driven Optimisation to RM. RM holds concurrent appointments as a Professor at Imperial and as an Amazon Scholar. This paper describes work performed at Imperial prior to joining Amazon and is not associated with Amazon.

\bibliographystyle{abbrvnat}
\bibliography{main}

\newpage
\appendix
\section{Example of Indexing Undirected Graphs}\label{app:example_undirected}
Given a graph with $N=6$ nodes as shown in Figure \ref{fig:undirected_step_0}, we use Algorithm \ref{alg:indexing_undirected} to index all nodes step by step for illustration. 

Before indexing, we first calculate the neighbor sets for each node:
\begin{equation*}
    \begin{aligned}
        &\mathcal N(v_0)=\{v_1,v_2,v_3,v_4,v_5\},~\mathcal N(v_1)=\{v_0,v_2,v_3,v_4\},~\mathcal N(v_2)=\{v_0,v_1,v_5\}\\
        &\mathcal N(v_3)=\{v_0,v_1,v_4\},~\mathcal N(v_4)=\{v_0,v_1,v_3\},~\mathcal N(v_5)=\{v_0,v_2\}
    \end{aligned}
\end{equation*}

$\bm {s=0:}$ $V_1^0=\emptyset,~V_2^0=\{v_0,v_1,v_2,v_3,v_4,v_5\}$

\begin{figure}[h]
    \centering
    \includegraphics[scale=0.8]{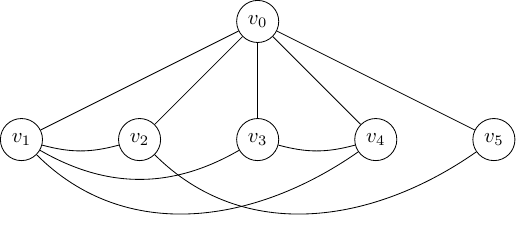}
    \caption{$V_1^0=\emptyset$.}
    \label{fig:undirected_step_0}
\end{figure}

\noindent Obtain indexed neighbors to each unindexed node:
\begin{equation*}
    \begin{aligned}
        \mathcal N^0(v_0)=\mathcal N^0(v_1)=\mathcal N^0(v_2)=\mathcal N^0(v_3)=\mathcal N^0(v_4)=\mathcal N^0(v_5)=\emptyset
    \end{aligned}
\end{equation*}
Rank all unindexed nodes:
\begin{equation*}
    \begin{aligned}
        rank(v_0)=rank(v_1)=rank(v_2)=rank(v_3)=rank(v_4)=rank(v_5)=0
    \end{aligned}
\end{equation*}
Assign a temporary index to each node based on previous indexes (for indexed nodes) and ranks (for unindexed nodes):
\begin{equation*}
    \begin{aligned}
        \mathcal I^0(v_0)=\mathcal I^0(v_1)=\mathcal I^0(v_2)=\mathcal I^0(v_3)=\mathcal I^0(v_4)=\mathcal I^0(v_5)=0
    \end{aligned}
\end{equation*}
After having indexes for all nodes, define temporary neighbor sets:
\begin{equation*}
    \begin{aligned}
        &\mathcal N^0_t(v_0)=\{0,0,0,0,0\},~\mathcal N^0_t(v_1)=\{0,0,0,0\},~\mathcal N^0_t(v_2)=\{0,0,0\},\\
        &\mathcal N^0_t(v_3)=\{0,0,0\},~\mathcal N^0_t(v_4)=\{0,0,0\},~\mathcal N^0_t(v_5)=\{0,0\}
    \end{aligned}
\end{equation*}
Based on the temporary neighbor sets, $v_0$ is chosen to be indexed $0$, i.e., $\mathcal I(v_0)=0$.

$\bm {s=1:}$ $V_1^1=\{v_0\},~V_2^1=\{v_1,v_2,v_3,v_4,v_5\}$

\begin{figure}[h]
    \centering
    \includegraphics[scale=0.8]{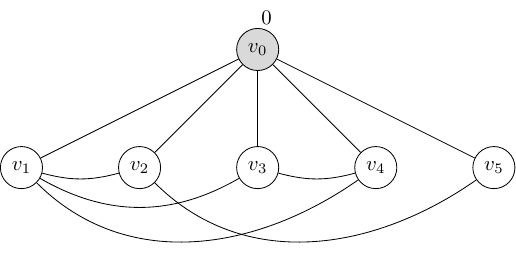}
    \caption{$V_1^1=\{v_0\}$.}
    \label{fig:undirected_step_1}
\end{figure}

\noindent Obtain indexed neighbors to each unindexed node:
\begin{equation*}
    \begin{aligned}
        \mathcal N^1(v_1)=\mathcal N^1(v_2)=\mathcal N^1(v_3)=\mathcal N^1(v_4)=\mathcal N^1(v_5)=\{0\}
    \end{aligned}
\end{equation*}
Rank all unindexed nodes:
\begin{equation*}
    \begin{aligned}
        rank(v_1)=rank(v_2)=rank(v_3)=rank(v_4)=rank(v_5)=0
    \end{aligned}
\end{equation*}
Assign a temporary index to each node based on previous indexes (for indexed nodes) and ranks (for unindexed nodes):
\begin{equation*}
    \begin{aligned}
        \mathcal I^1(v_1)=\mathcal I^1(v_2)=\mathcal I^1(v_3)=\mathcal I^1(v_4)=\mathcal I^1(v_5)=1
    \end{aligned}
\end{equation*}
After having indexes for all nodes, define temporary neighbor sets:
\begin{equation*}
    \begin{aligned}
        &\mathcal N^1_t(v_1)=\{0,1,1,1\},~\mathcal N^1_t(v_2)=\{0,1,1\},~\mathcal N^1_t(v_3)=\{0,1,1\},\\
        &\mathcal N^1_t(v_4)=\{0,1,1\},~\mathcal N^1_t(v_5)=\{0,1\}
    \end{aligned}
\end{equation*}
Based on the temporary neighbor sets, $v_1$ is chosen to be indexed $1$, i.e., $\mathcal I(v_1)=1$.

$\bm {s=2:}$ $V_1^2=\{v_0,v_1\},~V_2^2=\{v_2,v_3,v_4,v_5\}$

\begin{figure}[h]
    \centering
    \includegraphics[scale=0.8]{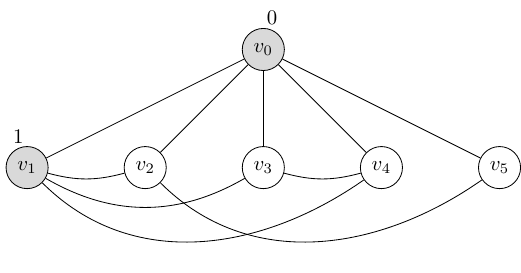}
    \caption{$V_1^2=\{v_0,v_1\}$.}
    \label{fig:undirected_step_2}
\end{figure}

\noindent Obtain indexed neighbors to each unindexed node:
\begin{equation*}
    \begin{aligned}
        \mathcal N^2(v_2)=\mathcal N^2(v_3)=\mathcal N^2(v_4)=\{0,1\},~\mathcal N^2(v_5)=\{0\}
    \end{aligned}
\end{equation*}
Rank all unindexed nodes:
\begin{equation*}
    \begin{aligned}
        rank(v_2)=rank(v_3)=rank(v_4)=0,~rank(v_5)=1
    \end{aligned}
\end{equation*}
Assign a temporary index to each node based on previous indexes (for indexed nodes) and ranks (for unindexed nodes):
\begin{equation*}
    \begin{aligned}
        \mathcal I^2(v_2)=\mathcal I^2(v_3)=\mathcal I^2(v_4)=2,~\mathcal I^2(v_5)=3
    \end{aligned}
\end{equation*}
After having indexes for all nodes, define temporary neighbor sets:
\begin{equation*}
    \begin{aligned}
        \mathcal N^2_t(v_2)=\{0,1,3\},~\mathcal N^2_t(v_3)=\mathcal N^2_t(v_4)=\{0,1,2\},~\mathcal N^2_t(v_5)=\{0,2\}
    \end{aligned}
\end{equation*}
Based on the temporary neighbor sets, both $v_3$ and $v_4$ can be chosen to be indexed $2$, yielding 2 feasible indexing ways. Without loss of generality, let $\mathcal I(v_3)=2$. 

\begin{remark}
    This step explains why temporary indexes and neighbor sets should be added into Algorithm \ref{alg:indexing_undirected}. Otherwise, $v_2$ is also valid to be index $2$, following which $v_3,v_4,v_5$ will be indexed $3,4,5$. Then the neighbor set for $\mathcal I(v_2)=2$ is $\{0,1,5\}$ while the neighbor set for $\mathcal I(v_3)=3$ is $\{0,1,4\}$, which violates constraints Eq.~ \eqref{eq:LO_undirected}.
\end{remark}

$\bm {s=3:}$ $V_1^3=\{v_0,v_1,v_3\},~V_2^3=\{v_2,v_4,v_5\}$

\begin{figure}[h]
    \centering
    \includegraphics[scale=0.8]{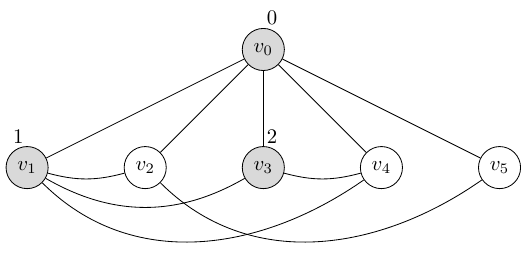}
    \caption{$V_1^3=\{v_0,v_1,v_3\}$.}
    \label{fig:undirected_step_3}
\end{figure}

\noindent Obtain indexed neighbors to each unindexed node:
\begin{equation*}
    \begin{aligned}
        \mathcal N^3(v_2)=\{0,1\},~\mathcal N^3(v_4)=\{0,1,2\},~\mathcal N^3(v_5)=\{0\}
    \end{aligned}
\end{equation*}
Rank all unindexed nodes:
\begin{equation*}
    \begin{aligned}
        rank(v_2)=1,~rank(v_4)=0,~rank(v_5)=2
    \end{aligned}
\end{equation*}
Assign a temporary index to each node based on previous indexes (for indexed nodes) and ranks (for unindexed nodes):
\begin{equation*}
    \begin{aligned}
        \mathcal I^3(v_2)=4,~\mathcal I^3(v_4)=3,~\mathcal I^3(v_5)=5
    \end{aligned}
\end{equation*}
After having indexes for all nodes, define temporary neighbor sets:
\begin{equation*}
    \begin{aligned}
        \mathcal N^3_t(v_2)=\{0,1,5\},~\mathcal N^3_t(v_4)=\{0,1,2\},~\mathcal N^3_t(v_5)=\{0,4\}
    \end{aligned}
\end{equation*}
Based on the temporary neighbor sets, $v_4$ is chosen to be indexed $3$, i.e., $\mathcal I(v_4)=3$.

$\bm {s=4:}$ $V_1^4=\{v_0,v_1,v_3,v_4\},~V_2^4=\{v_2,v_5\}$

\begin{figure}[h]
    \centering
    \includegraphics[scale=0.8]{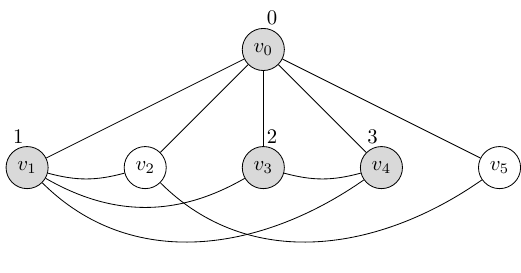}
    \caption{$V_1^4=\{v_0,v_1,v_3,v_4\}$.}
    \label{fig:undirected_step_4}
\end{figure}

\noindent Obtain indexed neighbors to each unindexed node:
\begin{equation*}
    \begin{aligned}
        \mathcal N^4(v_2)=\{0,1\},~\mathcal N^4(v_5)=\{0\}
    \end{aligned}
\end{equation*}
Rank all unindexed nodes:
\begin{equation*}
    \begin{aligned}
        rank(v_2)=0,~rank(v_5)=1
    \end{aligned}
\end{equation*}
Assign a temporary index to each node based on previous indexes (for indexed nodes) and ranks (for unindexed nodes):
\begin{equation*}
    \begin{aligned}
        \mathcal I^4(v_2)=4,~\mathcal I^4(v_5)=5
    \end{aligned}
\end{equation*}
After having indexes for all nodes, define temporary neighbor sets:
\begin{equation*}
    \begin{aligned}
        \mathcal N^4_t(v_2)=\{0,1,5\},~\mathcal N^4_t(v_5)=\{0,4\}
    \end{aligned}
\end{equation*}
Based on the temporary neighbor sets, $v_2$ is chosen to be indexed $4$, i.e., $\mathcal I(v_2)=4$.

$\bm {s=5:}$ $V_1^5=\{v_0,v_1,v_2,v_3,v_4\},~V_2^5=\{v_5\}$

\begin{figure}[h]
    \centering
    \includegraphics[scale=0.8]{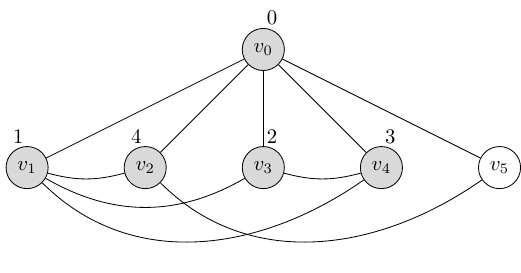}
    \caption{$V_1^5=\{v_0,v_1,v_2,v_3,v_4\}$.}
    \label{fig:undirected_step_5}
\end{figure}

\noindent Since there is only node $v_5$ unindexed, without running the algorithm we still know that $v_5$ is chosen to be indexed $5$, i.e., $\mathcal I(v_5)=5$.

\textbf{Output:}

\begin{figure}[h]
    \centering
    \includegraphics[scale=0.8]{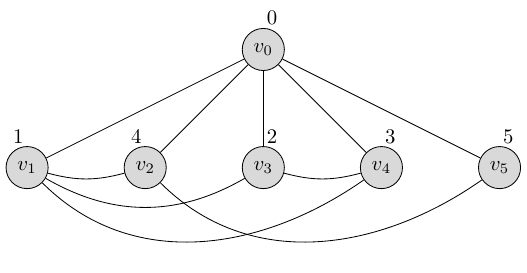}
    \caption{$V_1^6=\{v_0,v_1,v_2,v_3,v_4,v_5\}$.}
    \label{fig:undirected_step_6}
\end{figure}

\section{Example of Indexing DAGs}\label{app:example_DAG}

Given a DAG with $N=7$ nodes as shown in Figure \ref{fig:DAG_step_6}, we use Algorithm \ref{alg:indexing_DAG} to index all nodes. For simplicity, we only present several key steps.

$\bm {s=6:}$ Due to the definition of $\mathcal I^s(\cdot)$, one can easily observe that only nodes without unindexed successors could be chosen as the next node to be indexed. In the first step, only the sink $v_6$ does not have any unindexed successors, which will be indexed $6$, i.e., $\mathcal I(v_6)=6$.

\begin{figure}[h]
    \centering
    \includegraphics[scale=0.8]{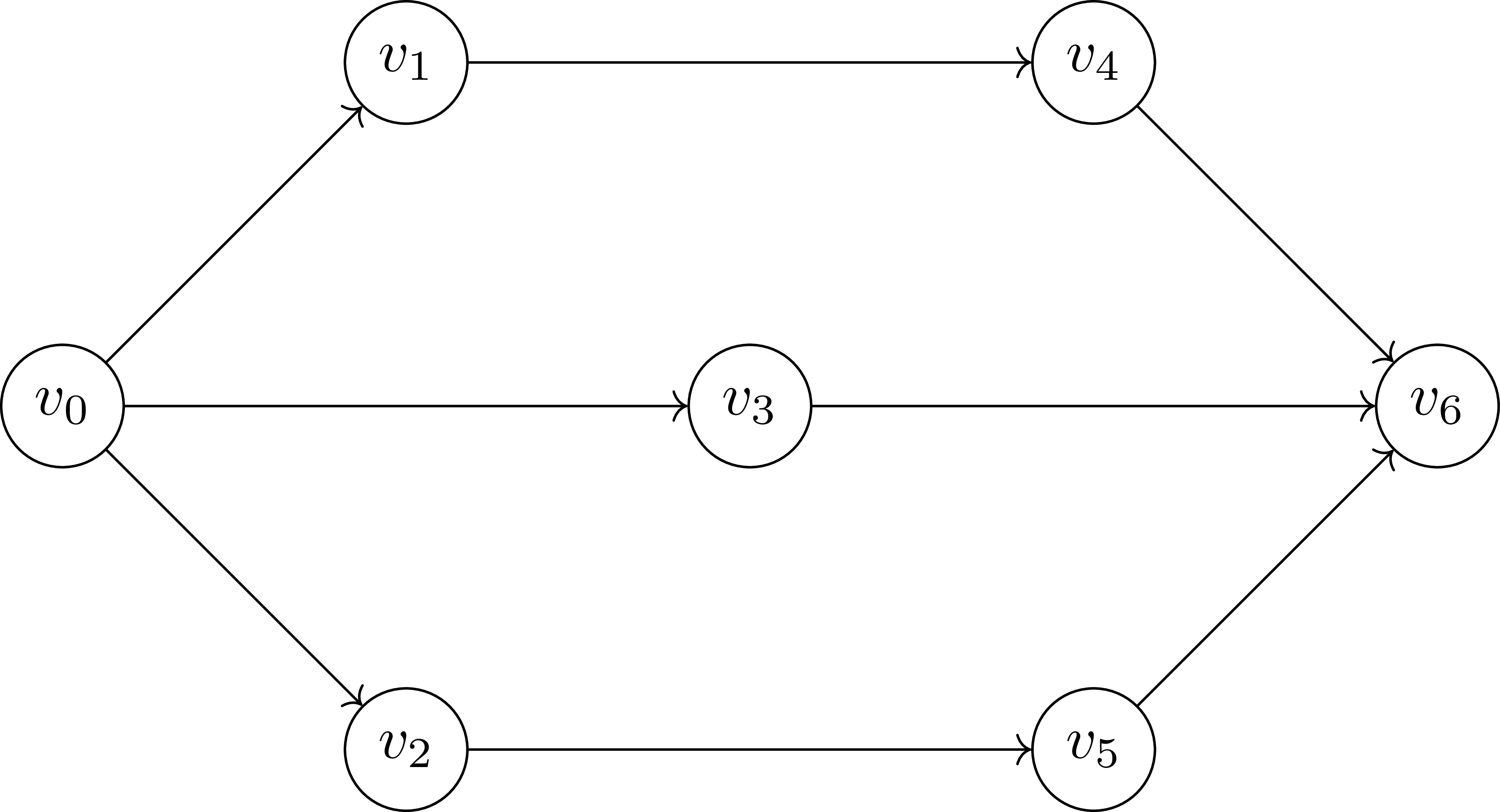}
    \caption{$V_1^6=\emptyset$.}
    \label{fig:DAG_step_6}
\end{figure}

$\bm{s=5,4,3:}$ We combine these 3 steps since $v_3,v_4,v_5$ share the same successor set and all other nodes have at least one successor with index smaller than $6$.

\begin{figure}[h]
    \centering
    \includegraphics[scale=0.8]{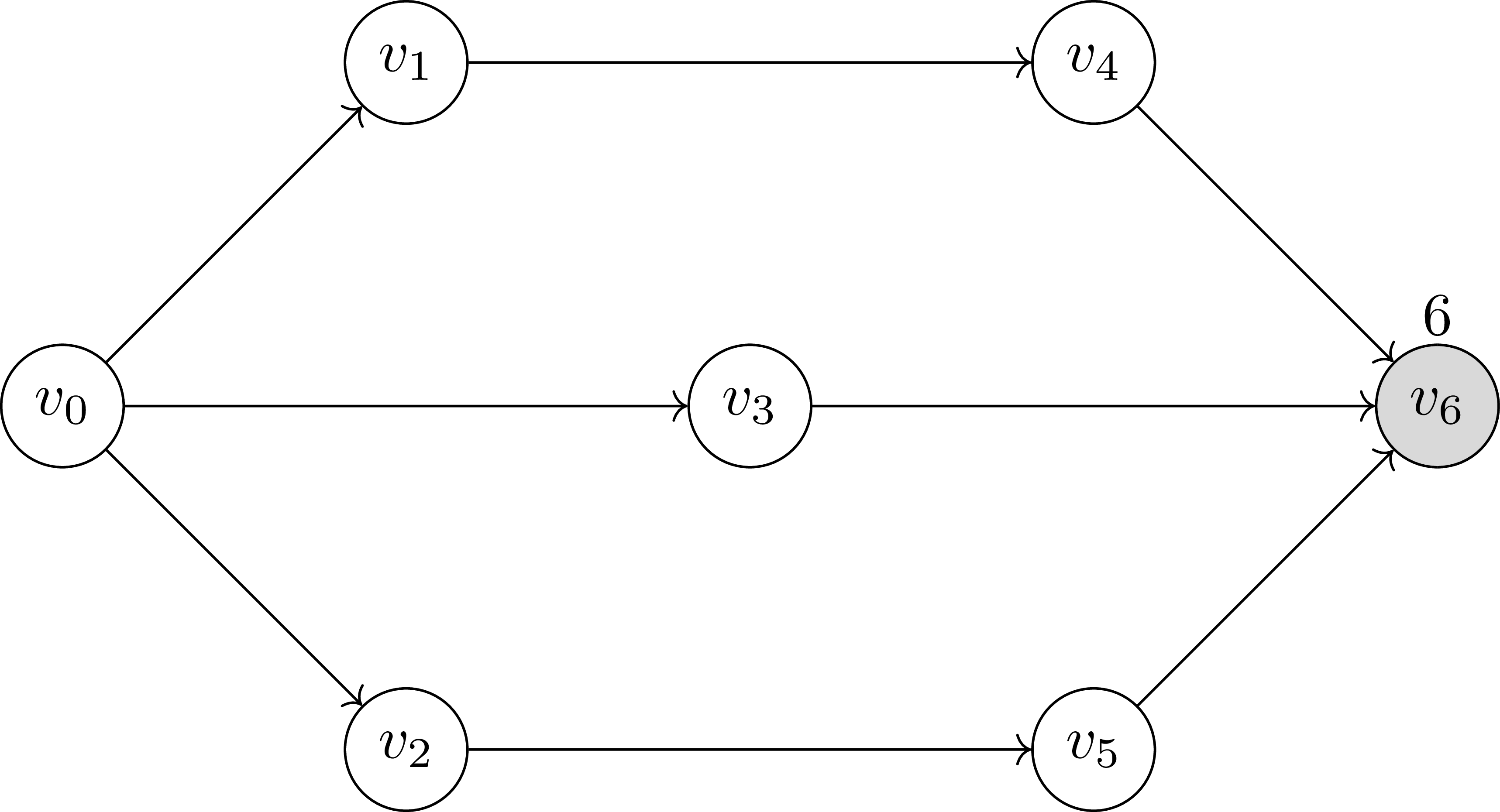}
    \caption{$V_1^5=\{v_6\}$.}
    \label{fig:DAG_step_5}
\end{figure}

\noindent Define temporary successor sets as:
\begin{equation*}
    \begin{aligned}
        &\mathcal S_t^s(v_0)=\{\mathcal I^s(v_1),\mathcal I^s(v_2),\mathcal I^s(v_3),\mathcal I^s(v_4),\mathcal I^s(v_5),6\},~\mathcal S_t^s(v_1)=\{\mathcal I^s(v_4),6\},\\
        &\mathcal S_t^s(v_2)=\{\mathcal I^s(v_5),6\},
        ~\mathcal S_t^s(v_3)=\mathcal S_t^s(v_4)=\mathcal S_t^s(v_5)=\{6\}
    \end{aligned}
\end{equation*}
Since $\mathcal I^s(v_1)=\mathcal I^s(v_2)=s<6,~\mathcal I^s(v_3)<6,~\mathcal I^s(v_4)<6,~\mathcal I^s(v_5)<6$ for $s=5,4,3$, $v_3,v_4,v_5$ will be indexed $s=5,4,3$ in any order, which is the major motivation of Conjecture \ref{conj:ancestor}. Here we arbitrarily choose $\mathcal I(v_4)=5,~\mathcal I(v_3)=4,~\mathcal I(v_5)=3$.

$\bm{s=2,1:}$ We combine these 2 steps since $v_0$ could not be indexed until both $v_1$ and $v_2$ are indexed. 

\begin{figure}[h]
    \centering
    \includegraphics[scale=0.8]{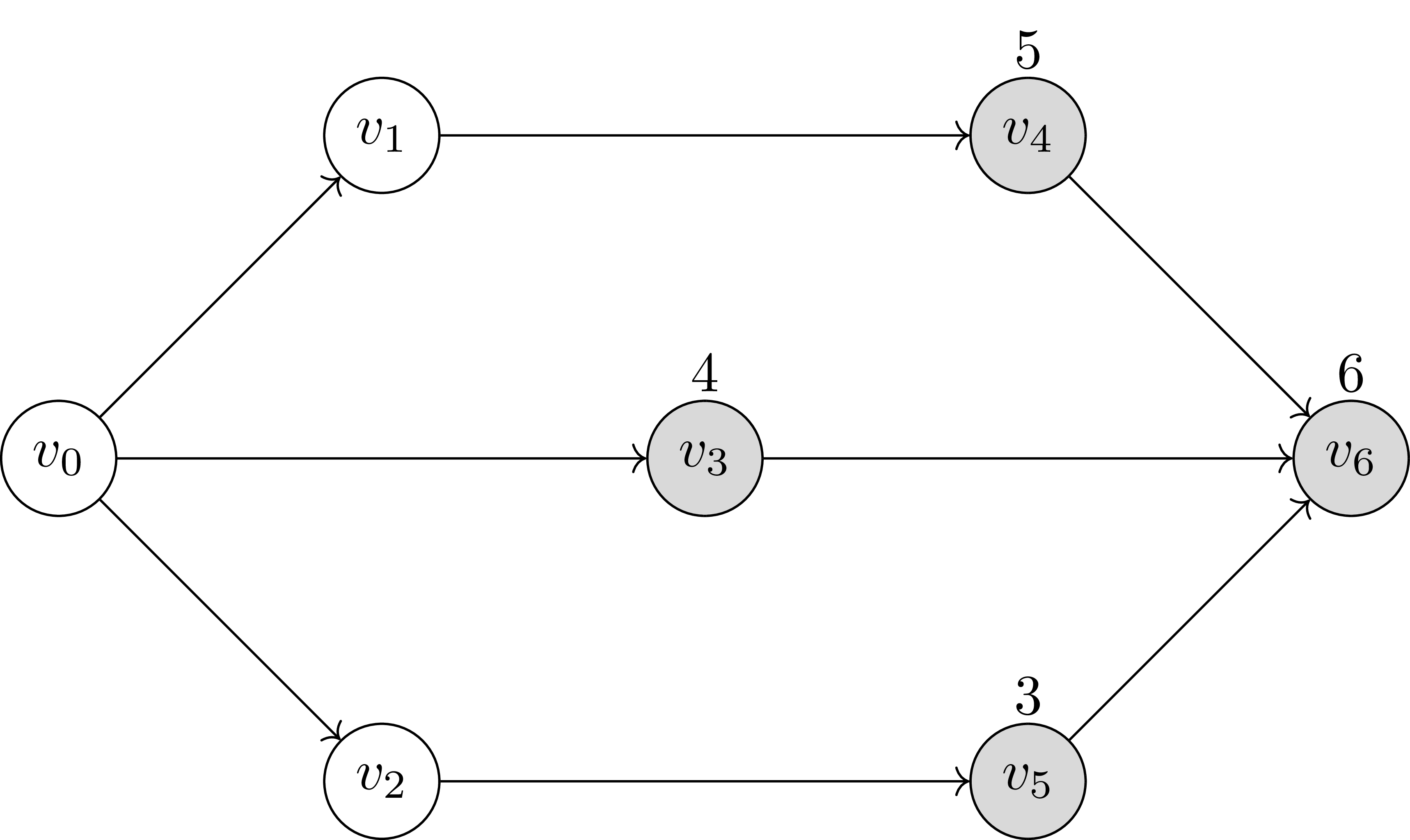}
    \caption{$V_1^2=\{v_3,v_4,v_5,v_6\}$.}
    \label{fig:DAG_step_2}
\end{figure}

\noindent Their temporary successor sets are:
\begin{equation*}
    \begin{aligned}
        \mathcal S_t^s(v_1)=\{5,6\},~\mathcal S_t^s(v_2)=\{3,6\}
    \end{aligned}
\end{equation*}

Since $LO(\mathcal S_t^s(v_1))>LO(\mathcal S_t^s(v_2))$, we have $\mathcal I(v_1)=2,~\mathcal I(v_2)=1$.

$\bm{s=0:}$ Since there is only $v_0$ unindexed, it will be indexed $0$, i.e., $\mathcal I(v_0)=0$.

\begin{figure}[h]
    \centering
    \includegraphics[scale=0.8]{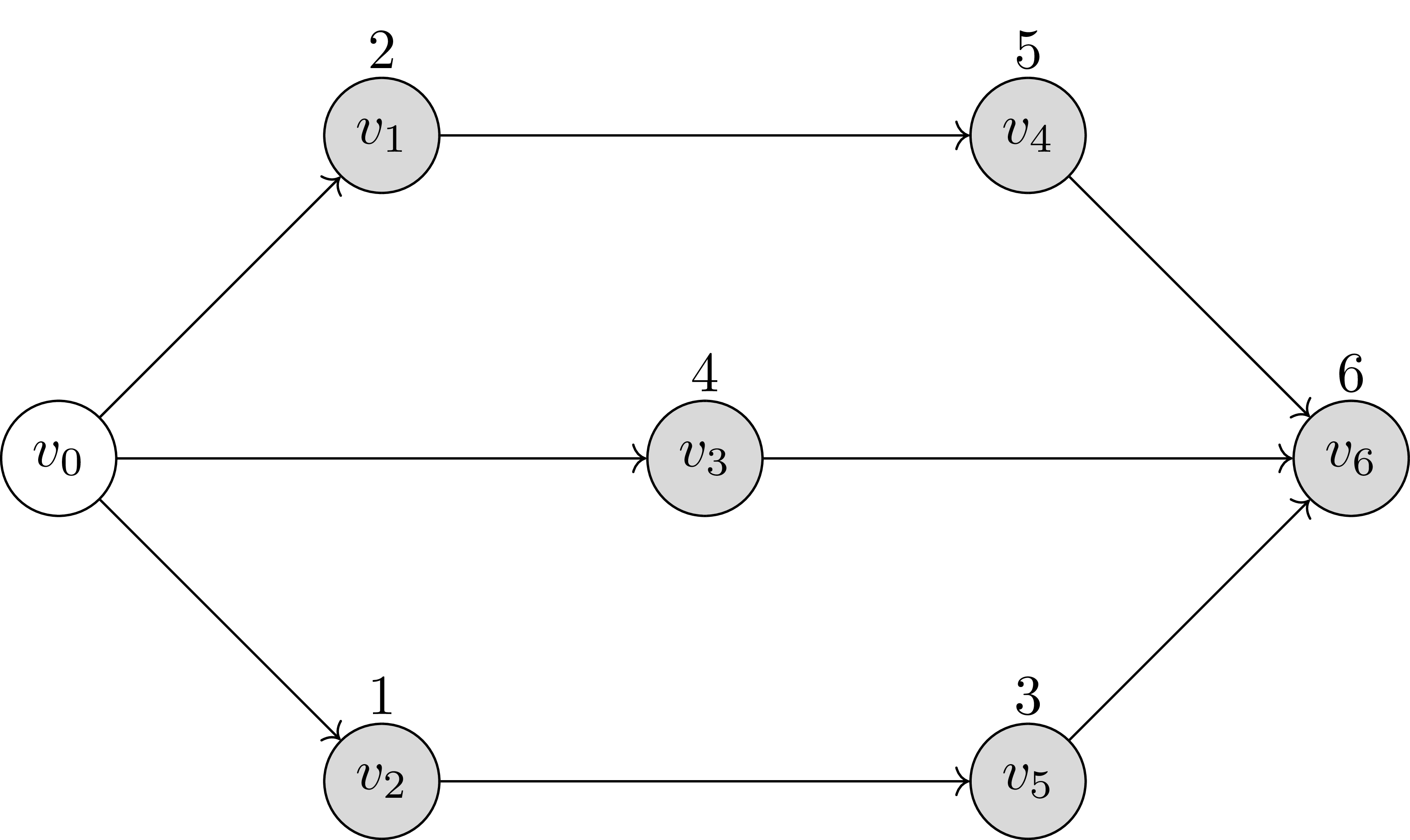}
    \caption{$V_1^{0}=\{v_1,v_2,v_3,v_4,v_5,v_6\}$.}
    \label{fig:DAG_step_0}
\end{figure}

\textbf{Output:}

\begin{figure}[h]
    \centering
    \includegraphics[scale=0.8]{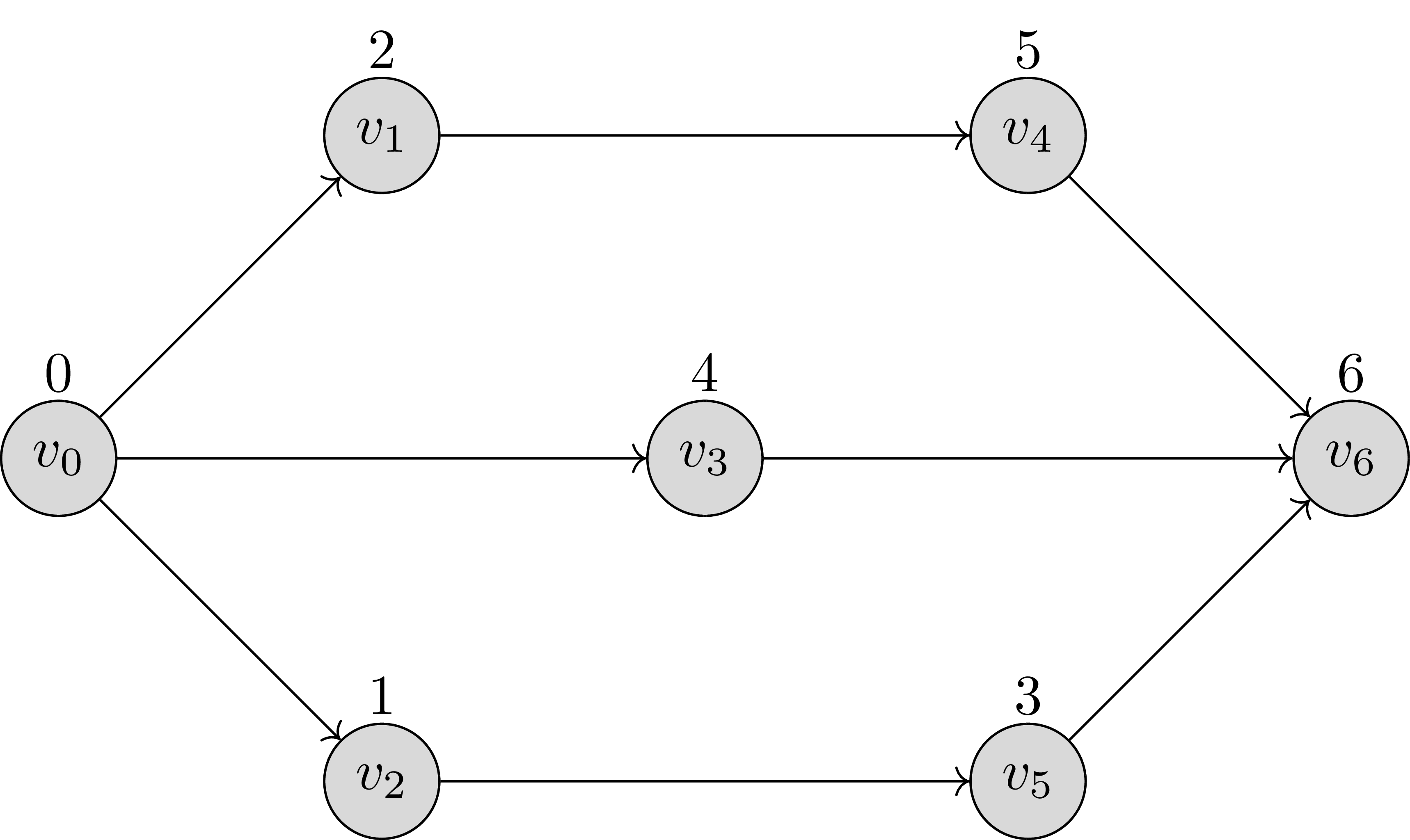}
    \caption{$V_1^{-1}=\{v_0,v_1,v_2,v_3,v_4,v_5,v_6\}$.}
    \label{fig:DAG_step_-1}
\end{figure}

\end{document}